\documentclass{arxiv}
\RequirePackage{tgtermes}
\RequirePackage{newtxtext}
\RequirePackage{newtxmath}
\RequirePackage{bm}
\RequirePackage{endnotes}

\OneAndAHalfSpacedXII % Current default line spacing
\usepackage[dvipsnames]{xcolor}
\usepackage[colorlinks,urlcolor=blue,linkcolor=blue,citecolor=blue]{hyperref}
\usepackage{framed}
\usepackage{wrapfig}% to insert images surrounded by text
\usepackage{enumitem}
\usepackage{float}
\usepackage{url}

\usepackage{booktabs}
\usepackage{tabularx}
\usepackage{array}
\newcolumntype{L}{>{\bfseries\raggedright\arraybackslash}p{0.19\textwidth}}
\newcolumntype{Y}{>{\raggedright\arraybackslash}X}

\usepackage{amsmath}
\usepackage{amssymb}

\usepackage[caption=false]{subfig}

\usepackage[capitalise]{cleveref}

\usepackage[sort&compress]{natbib}
 \bibpunct[, ]{[}{]}{,}{n}{}{,}%
 \def\bibfont{\small}%
\newcommand{\defn}{\stackrel{\triangle}{=}}
\newcommand{\tth}{{}^{\text{th}}}

\newcommand{\fbar}{\overline{f}}

\newcommand{\tq}{\widetilde{q}}
\newcommand{\tvq}{\widetilde{\vq}}

\newcommand{\bE}{\mathbb{E}}
\newcommand{\bR}{\mathbb{R}}
\newcommand{\bZ}{\mathbb{Z}}

\newcommand{\cT}{\mathcal{T}}
\newcommand{\cX}{\mathcal{X}}
\newcommand{\cZ}{\mathcal{Z}}

\newcommand{\ve}{\boldsymbol{e}}
\newcommand{\vq}{\boldsymbol{q}}
\newcommand{\vone}{\boldsymbol{1}}

\newcommand{\vlambda}{\boldsymbol{\lambda}}
\newcommand{\vmu}{\boldsymbol{\mu}}

\newcommand{\E}[1]{\bE\left[#1\right]}
\newcommand{\ind}[1]{\mathbb{I}_{\left\{#1 \right\}}}
\newcommand{\Prob}[1]{\mathbb{P}\left[#1\right]}

\EquationsNumberedThrough    % Default: (1), (2), ...
\TheoremsNumberedThrough     % Preferred (Theorem 1, Lemma 1, Theorem 2)
\ECRepeatTheorems  %  

\MANUSCRIPTNO{}

\begin{document}
%%%%%%%%%%%%%%%%

% Outcomment only when entries are known. Otherwise leave as is and
%   default values will be used.
%\setcounter{page}{1}
%\VOLUME{00}%
%\NO{0}%
%\MONTH{Xxxxx}% (month or a similar seasonal id)
%\YEAR{0000}% e.g., 2005
%\FIRSTPAGE{000}%
%\LASTPAGE{000}%
%\SHORTYEAR{00}% shortened year (two-digit)
%\ISSUE{0000} %
%\LONGFIRSTPAGE{0001} %
%\DOI{10.1287/xxxx.0000.0000}%

% Author's names for the running heads
% Enter authors following the given pattern:
\RUNAUTHOR{Hurtado-Lange and Grosof}

% Enter the (shortened) title:
\RUNTITLE{JSQ with Markov-Modulated parameters}

% Enter the full title:
\TITLE{Markov Modulated JSQ in Heavy Traffic Via the Poisson Equation}

% Block of authors and their affiliations starts here:
% NOTE: Authors with same affiliation, if the order of authors allows,
%   should be entered in ONE field, separated by a comma.
%   \EMAIL field can be repeated if more than one author
\ARTICLEAUTHORS{%
%\AUTHOR{John Doe,\textsuperscript{a} Jane Smith,\textsuperscript{b}}
%\AFF{\textsuperscript{a}Department of Industrial Engineering, University of XYZ, \EMAIL{john.doe@xyz.edu; \textsuperscript{b}Department of Computer Science, University of ABC, \EMAIL{jane.smith@abc.edu}} 
\AUTHOR{Daniela Hurtado-Lange}
\AFF{Operations Department, Kellogg School of Management,
Northwestern University, \EMAIL{daniela.hurtado@kellogg.northwestern.edu}}

\AUTHOR{Izzy Grosof}
\AFF{Industrial Engineering \& Management Sciences Department,
Northwestern University, \EMAIL{izzy.grosof@northwestern.edu}}
% Enter all authors
} % end of the block

\ABSTRACT{%
% Enter your abstract
In parallel-server systems with a single stream of arrivals (a.k.a. load balancing), Join-the-Shortest-Queue (JSQ) is a popular routing algorithm. There is extensive literature studying this system in various asymptotic regimes, but all assume constant parameters (arrival and service rates). 
We study the JSQ system with Markov-modulated parameters and heterogeneous servers. Our main contributions are: (i) We compute the heavy-traffic distribution of the scaled vector of queue lengths; (ii) We utilize a novel hybrid methodology that combines the Transform Method for queue-lengths analysis and the Poisson equation; (iii) We provide sufficient conditions to ensure state space collapse, showing provable balancing power of JSQ for heterogeneous servers. Unlike other studies involving Markov-modulated queues, these conditions don't depend on the mixing time of the modulating chain and are valid for a countably infinite state space.
We numerically demonstrate the strength of our results under moderate traffic intensities and showcase their independence from the corresponding mixing times. 
}%

%\FUNDING{}

%Supplemental Material:
%Data Ethics & Reproducibility Note:

% Sample
%\KEYWORDS{Stochastic programming, Decision support,Uncertainty, Disaster response, Optimization}

% Fill in data. If unknown, outcomment the field
\KEYWORDS{Markov Modulation, Join-the-Shortest-Queue, Heavy Traffic, Poisson Equation, State-Space Collapse} 
%\HISTORY{Received: Month DD, YYYY; Accepted: Month DD, YYYY; Published Online: Month DD, YYYY}

\maketitle
%%%%%%%%%%%%%%%%%%%%%%%%%%%%%%%%%%%%%%%%%%%%%%%%%%%%%%%%%%%%%%%%%%%%%%

% Text of your paper here

\section{Introduction}

Parallel-server systems with a single stream of arrivals that are routed upon joining have been studied for decades. Since the classic papers \cite{Fos-Sal-1978-jsq,Win-1977-JSQ-optimality} showed that joining the shortest queue (JSQ) minimizes the waiting times across policies that don't know job durations, this routing policy has received a lot of attention in the literature. Most of the studies analyze this JSQ system in an asymptotic regime, such as classical heavy traffic (e.g. \cite{Ery-Sri-2012-drift,Fos-Sal-1978-jsq,HL-Mag-2020-MGF,Win-1977-JSQ-optimality}) and many-server heavy-traffic regimes where the number of servers grows at various rates with respect to the heavy-traffic parameter (e.g. \cite{HL-Mag-2022-JSQ-Many-Server,Muk-2022-ROC-JSQ,Jhu-HL-Mag-2023-JSQ-ROC}). More recently, there has been some interest in analyzing pre-limit systems and their rate of convergence to the corresponding regimes \cite{Ban-Muk-2020-JSQ-HW-sensitivity,Jhu-HL-Mag-2023-JSQ-ROC}. All these studies complement each other to provide a rich understanding of the JSQ policy across systems of varying sizes and loads. However, they are all constrained to settings where the arrival and service rates are constant over time.

One of the most intuitive applications for the JSQ system is the checkout area in supermarkets. After customers have collected the items they want to purchase, they observe the cashiers' area, and will likely join the shortest queue and wait there until they are first in line and get processed. Clearly, the customer arrival rate is not constant throughout the day. On the servers' side, because the cashiers are typically human, they do not process all customers at the same speed. Further, they frequently have baggers that accelerate their service rate, but the baggers are not always helping the same server. 

Situations in which the servers have two possible service rates are frequently modeled as working vacations: the server works at full speed or slows down without completely shutting down. There is plenty of literature on queues where servers take working vacations, but most of it focuses on a single queue. See, for example, \citet{Fie-2025-working-vacation-survey} and the references therein. Further, having only two possible server speeds is also limiting. In the supermarket example, this model captures the situation with a traveling bagger but does not allow each cashier to have different service rates during the day. For example, they may get tired during the shift and slow down towards the end.

These time-varying arrival and service rates also arise across diverse application domains—from cloud and datacenter job routing, to call centers with time-dependent demand, to ride-sharing platforms where demand surges generate sharp fluctuations in system load.

Queueing models with constant arrival and service rates typically have i.i.d. inter-arrival and service times. This i.i.d. structure enables deep analytical results, which makes it convenient from a mathematical perspective. However, it is unclear to what extent these assumptions limit the practical implications of the results and properties. A much more general model for the arrival and service rates is Markov modulated, that is, the rates are time-varying, and their transitions follow a Markov chain. Indeed, Markov-modulated arrival processes are dense in the space of point processes \cite[Chapter XI]{Asm-2003-Applied-Prob-book}. Therefore, this model encompasses a wide range of arrivals and service processes, while maintaining the tractable structure of a Markov chain. 

The main barrier to analyzing Markov-modulated JSQ is that the queue lengths, arrival rates, and service rates are interdependent, destroying the independence structure that underpins classical queueing-theory analysis. Existing approaches for Markov-modulated single-server queues rely heavily on matrix-analytic methods, finite-state modulation, or diffusion arguments that do not scale to multi-server routing. Moreover, existing heavy-traffic methods typically require mixing-time assumptions and yield convoluted equations. In this paper, we study a JSQ system with Markov-modulated arrival and service rates in heavy traffic. Our methodology circumvents these obstacles by isolating modulation dependence via the Poisson equation, thereby extending the transform method for heavy-traffic analysis without introducing additional technical machinery (see \cref{sec:proof-overview} for an overview of these approaches). Specifically, our contributions are
\begin{enumerate}
    \item We compute the heavy-traffic distribution of the scaled vector of queue lengths and the rate of convergence of the component-wise Moment Generating Function (MGF). To the best of our knowledge, this is the first paper to analyze a multi-queue system where both arrival and service rates are not i.i.d. (see \cref{thm:jsq-vq-convergence}).
    
    \item We develop a novel technique that combines the transform method introduced by \citet{HL-Mag-2020-MGF} with the Poisson equation to compute the MGF of the scaled queue lengths. 
    
    \item Specifically, we use the Poisson equation to compute the covariance between functions of the modulating Markov chain and the queue lengths (see \cref{thm:ssq-poisson-lemma,thm:jsq-poisson-lemma}). To the best of our knowledge, this is the first paper to use the Poisson equation to compute a covariance in the queueing literature.
      
    \item Our analysis technique is the first to allow infinite modulation processes. As discussed in \cref{sec:prior-work}, there is extensive literature in single-server queues with Markov-modulated parameters, but they all assume a finite-state-space modulation process.
    
    \item We establish a sufficient condition for JSQ to balance the queue lengths in heavy traffic. We call it the State Space Collapse (SSC) load condition (see \cref{def:condition_lambda_i-mu_i}). This condition depends only on the arrival and service rates in each state of the Markov-modulated process.

    \item Our methodology replaces mixing-time assumptions with explicit, verifiable conditions on the Markov-modulated arrival and service rates.
\end{enumerate}

Our approach is particularly innovative in the Markov-modulated queueing literature, as it does not require bounding the mixing time of the Markov-modulation process. Instead, we show that a structural SSC load condition is sufficient, even with infinitely many (but countable) modulating states. This is the first heavy-traffic method capable of handling infinite-state modulation in load-balancing systems.  

The rest of this paper is organized as follows. \cref{sec:prior-work} describes relevant literature for our work. \cref{sec:model} describes the JSQ model in detail, establishes the notation, and briefly discusses the main results of the paper. \cref{sec:proof-overview} is essential to the paper as it describes our proof technique, with special attention to the transform method introduced by \citet{HL-Mag-2020-MGF} and our integration of the Poisson equation. \cref{sec:ssq} introduces the details of our methodology in the context of a single-server queue. This simplified setting allows us to keep the exposition clean and emphasize the core steps of our proof technique. \cref{sec:jsq} generalizes the proof to the JSQ system. In \cref{sec:empirical} we provide numerical evidence that our results are independent of mixing times, and show that the SSC load condition is not a necessary condition. Finally, \cref{sec:conclusion} discusses the key takeaways of the paper and some future work directions.

\section{Prior Work}
\label{sec:prior-work}

Our paper lies at the intersection of three lines of literature (two of them closely related): (i) Load balancing systems and Join-the-Shortest Queue (JSQ); (ii) Heavy-traffic analysis; and (iii) Analysis of Markov-modulated queueing systems. We discuss the literature and our contributions to each in the following subsections \cref{sec:prior-jsq,sec:prior-ht,sec:prior-markov}, as well as broader models in \cref{sec:prior-varying}.

\subsection{Load balancing and JSQ}
\label{sec:prior-jsq}
% About JSQ \\
% - Queueing networks are intractable, so heavy traffic analysis
% - JSQ is a great example to start a new theory as it carries multiple difficulties but is intuitive. Hence, it is a very important system in the queueing literature (can mention guardrails paper by Izzy)
% - JSQ is heavy-traffic optimal
% - JSQ has also been studied in other asymptotic regimes such as many-server heavy-traffic at different loads (see my many-server paper for references)

JSQ has been studied for decades, with the first analyses showing that it is response-time optimal across policies that do not know jobs durations \cite{Win-1977-JSQ-optimality,Fos-Sal-1978-jsq, Web-1978-JSQ-optimal, Eph-1980-JSQ}, in a time-homogenous exponential setting. One of the most striking features of JSQ is that it balances the queue lengths in heavy traffic, that is, all the queues are approximately equal in the limit. Formally, a JSQ system satisfies 
State Space Collapse (SSC) to a one-dimensional subspace. Intuitively, this result holds because whenever there is an arrival, it goes to the queue with the fewest jobs, decreasing the difference between the longest and shortest queues. Hence, as the traffic intensity increases, there are sufficient arrivals to balance all the queue lengths. 
% References from my many-server paper:
% Heavy traffic analysis: 10, 12, 15, 17, 19, 25, 26, 27, 29, 30, 37, 41, 47, 57, 58

The importance of JSQ systems goes beyond heavy traffic analysis. It has been studied in many-server settings \cite{Mit-1996-JSQ-d,Vve-Dob-Kar-1996-JSQ2} and many-server heavy-traffic at various traffic intensities \cite{Ban-Muk-2019-JSQ-HW,Ban-Muk-2020-JSQ-HW-sensitivity,Bra-2020-JSQ,Esc-Gam-2018-JSQ,Jhu-HL-Mag-2023-JSQ-ROC}. In all these studies, the servers process jobs in order of arrival after routing. However, delay can be considerably reduced if a routing algorithm is paired with improved scheduling at the servers' level. \citet{Gro-Scu-Har-2019-guardrails}, for example, uses JSQ as a benchmark to show the improvement in response time that can be achieved by other routing algorithms when paired with optimal SRPT scheduling, and JSQ has also been studied alongside processor-sharing scheduling \cite{Gup-2007-Analysis}.

In this paper, we use JSQ as a foundational system to develop and showcase our methodology for queueing systems with Markov-modulated arrival and service rates.

\subsection{Heavy-traffic analysis and transform method}
\label{sec:prior-ht}
% How to do heavy traffic analysis?
% - Diffusion limits for decades \\ 
% - Three popular direct methods: BAR (Anton's literature + multiscale heavy-traffic), Stein's method (Anton + Lei + Daniela many-server paper), MGFs (Sem Borst and Daniela's transform method stuff discussed below) \\
% \textbf{Transform method literature:} \\
% - Daniela's MGF method papers \\
% - Sushil's transform method for two-sided queues \\
% - Prakirt's and Daniela's ROC paper \\
% - Zaiwei: Transform method for RL\\
Exact analysis of queueing systems is typically intractable. Even for the simplest system (the $M/M/1$ queue), the distribution of queue lengths is known in closed form only in steady state. The transient distribution of the queue length requires numerical approximations for Bessel functions and double-Laplace transforms, suggesting that exact analysis of more complex systems is indeed intractable. See, for example, \cite[Chapter III.8]{Asm-2003-Applied-Prob-book}. Hence, a prevalent approach is to study asymptotic regimes to draw key insights. Heavy traffic and mean field (or many-servers) are two of the most popular regimes, and JSQ systems have been pivotal in developing new methodologies and proving new results.

For decades, the most popular tool for heavy-traffic analysis was diffusion approximations and process-level convergence. This approach was established by \citet{Kin_1962_RBM}, where a $G/G/1$ queue is analyzed in heavy traffic, and has been generalized to multiple settings. A non-exhaustive list of foundational papers and books on this topic are \cite{Bell-Will-01-Nsystem,Williams-2000-CRP,Har-1988-Brownian,HarLop-1999-CRP,Har_2013_book}, but the literature is extensive. In simple words, under this approach, one scales the queue lengths and shows process-level convergence to a Reflected Brownian Motion (RBM) and then analyzes the steady-state behavior of this RBM. The analysis is only complete if the interchange of limits property can be proved. This last step is tremendously challenging and is frequently done in a separate paper.

In the last decade, the queueing community has developed direct methods that do not require this interchange of limits step. Instead, they analyze the heavy-traffic limit of the steady-state scaled queue lengths directly. In very simple words, Stein's method \cite{Bra-Dai-2017-Stein,Bra-Dai-2017-Stein2,Wal_2020_SteinHT,Liu-Yin-2020-Stein-SubHW,Ying-2017-Stein,HL-Mag-2022-JSQ-Many-Server} uses the generator of a Markov process to bound the Wasserstein's distance between the scaled queue lengths and their limiting distribution; the Basic Adjoint Relationship (BAR) methodology bounds the jumps on exponential test functions of the workload process to obtain the heavy-traffic distribution \cite{Bra-Dai-Miy_2017_BAR,Bra-Dai-Miy-2025-BAR,Dai-Huo-2024-Multi-Scale-HT,Dai-Gly-Yao-2023-Multi-Scale-HT}; and the drift and transform methods bound the drift of a test function that yields information about the moments of the queue lengths in heavy traffic \cite{Ery-Sri-2012-drift,Mag-Sri-2016-switch,Wang-Mag-Sri-2022-Bandwidth-Sharing,HL-Mag-2020-MGF,HL-Mag-2022-JSQ-Many-Server,Jhu-HL-Mag-2023-JSQ-ROC}. The literature presented in this paragraph is by no means extensive for each methodology and instead, represents a small sample of the current work on each of them. 

In this paper, we focus on the transform method, which was first developed for the JSQ system and a parallel-server system with control on the service process by \citet{HL-Mag-2020-MGF}. In more recent years, it has been generalized to multiple settings, such as a JSQ system with abandonment by \citet{Jhu-Zub-Mag_2025_JSQ-abandonments}, other parallel-server systems with routing by \citet{Luo-Zub-2025-Load-Balancing-Stability}, ride-sharing settings by \citet{Var-Mag-2022-HT-two-sided-q}, and stochastic gradient descent algorithms by \citet{Che-Mou-Mag-2022-SGD-Transform}. Each extension follows the same drift-based template with an exponential test function, but requires additional ideas to address system-specific challenges.

% Everything above is for fixed parameters! \\
All the literature described above assumes fixed arrival and service rates. In this paper, we enter the much less understood domain of queueing systems with time-varying parameters. In particular, we develop a methodology that uses the transform method and the Poisson equation to study queueing systems with Markov-modulated parameters.

\subsection{Queues with Markov-modulated parameters}
\label{sec:prior-markov}

% - Very popular in 80s and 90s \\
% - These methods are not easy to generalize and depend on matrix properties \\
%%%% Burman, Smith (1986): Markov/M/1 queue in light and heavy traffic. Approximation of mean.
%%%% Prabhu, Zhu (1989): Busy period and mean workload in a Markov/Markov/1 queue
%%%% Baccelli, Makowski (1991): Markov/G/1 queue analysis using Martingales
%%%% Falin, Falin (1999): Markov/G/1 queue-length distribution in heavy traffic
%%%% Mahabhashyam, Gautam (2005): Single-server queue with Markov-modulated service times

% \cite{Tho-Ver-2016-Markov-G-1-processor-sharing} Processor sharing and Markov-modulated single queue. Heavy-traffic distribution of scaled workload
% Siva and Shangcong's paper: Heavy traffic analysis for multiple queues, but service times are deterministic. Also, they use multi-step Lyapunov approach and obtain results in terms of auto-covariance function. They also need assumptions on the mixing time.

% - These methods are not easy to generalize and depend on matrix properties \\
% Nobody has done an analysis of routing policies in heavy traffic. In particular, we provide conditions under which JSQ satisfies the CRP condition and, hence, is heavy-traffic optimal.

Much of the literature analyzing queueing systems with Markov-modulated parameters dates to the 1980s, 1990s, and early 2000s. One of the seminal papers studying performance metrics of a single queue is \citet{Bur-Smi_1986_Markov-G-1-HT}, where the authors compute the mean queue length and mean delay in the light and heavy-traffic limits, subject to a technical assumption relating to an interchange of limits. The literature is extensive, with several prominent papers, including \cite{Pra-Zhu-1989-Markov-q,Bac-Mak-1991-M-G-1-martingale,Fal-Fal-1999-Markov-G-1-HT}. This is, of course, not an extensive list of papers. The focus in most of these papers is on managing the transition matrix of the Markov-modulating process and obtaining a tractable expression for diverse performance metrics such as the busy periods, queue length, and delay. For example, \citet{Bac-Mak-1991-M-G-1-martingale} propose a methodology based on martingale analysis. This body of literature analyzes single-server queues, and its contributions are oriented toward matrix-analytic methods and approaches that support a general distribution of service times. That said, all of these constrain the Markov-modulation process to a finite state space.

More recent studies on queueing systems with Markov-modulated parameters are scarce. \citet{Blo-et-al-2014-Markov-G-infty}, for example, use techniques inspired by the Central-Limit-Theorem to analyze an $M/G/\infty$ queue with Markov-modulated arrival rate. The concept of working vacations also gained popularity recently, with \citet{Fie-2025-working-vacation-survey} presenting a complete survey of recent studies. Again, these literature streams focus on a single queue, and the Markov-modulation process is finite.

The most relevant studies to our paper are \citet{Fal-Fal-1999-Markov-G-1-HT} and \citet{Mou-Mag-2024-Switch-Markov}. \citet{Fal-Fal-1999-Markov-G-1-HT} study a single-server queue with Markov-modulated arrivals and general service times. They use matrix-analytic methods to compute the heavy-traffic distribution of queue lengths via Laplace transforms and a Poisson equation. However, their analysis of the Markov-modulation process only allows for finite state space, and their matrix analysis methods are targeted to this specific system. Hence, generalizing this method to contexts beyond the single-server setting is challenging and, indeed, it has not been done. Their result is also independent of the mixing time of the modulating chain, but they do not incorporate SSC in their analysis. The only known generalization of this paper is \citet{Dim-2011-Falin-Generalization}, which extends the methodology to allow Markov-modulated service times as well, still in the single-server setting.

On the other hand, \citet{Mou-Mag-2024-Switch-Markov} compute the heavy-traffic queue-length distribution of an input-queued switch using an extension of the transform method. The similarities between this paper and ours are: (i) they also study a system with multiple queues and multiple servers; (ii) their proof also uses SSC; and (iii) they also utilize transform-method techniques to compute the distribution of the queue lengths. However, the input-queued switch is best analyzed in discrete time, as it has fixed service times of one time slot per job, and SSC to a one-dimensional subspace is obtained only under a carefully chosen heavy-traffic regime in which the arrival rates are tuned so that exactly one of the $n^2$ queues is critically loaded, while all others remain strictly stable. Additionally, they study $M$-step drift of the test function and let $M\to\infty$ to compute the distribution. Hence, their technique depends on the mixing time of the Markov-modulating process, and they provide the result in terms of the auto-covariance function. The key distinctions between this paper and ours are (i) our results are not dependent on the mixing time; (ii) our methodology allows for Markov-modulated arrival and service rates; (iii) we provide sufficient conditions for SSC depending on the parameters; and (iv) our approach, based on the transform method and the Poisson equation, yields an elegant proof that doesn't require taking additional limits. A summary of the key differences between \cite{Fal-Fal-1999-Markov-G-1-HT,Dim-2011-Falin-Generalization,Mou-Mag-2024-Switch-Markov} and our paper is presented in \cref{tab:lit-review}. 
% \begin{table}
% \TABLE
% % Caption
% {Comparison of relevant literature and our paper.\label{tab:lit-review}}
% % Table
% {
% \begin{tabular}{|l|p{0.22\textwidth}|p{0.22\textwidth}|p{0.22\textwidth}|}
%         \hline 
%         Paper & \cite{Fal-Fal-1999-Markov-G-1-HT,Dim-2011-Falin-Generalization} & \cite{Mou-Mag-2024-Switch-Markov} & This paper \\ \hline
%         Model & Single-server queue & Input-queued switch & Parallel servers, JSQ \\ \hline
%         Modulation & Arrivals and service & Arrivals & Arrivals and service \\ \hline
%         Modulation state space & Finite & Finite & Countable \\ \hline
%         \# servers and queues & 1 & $n$ servers, $n^2$ queues & $n$ servers and queues \\ \hline
%         State Space Collapse? & No & Assumed & Proved \\ \hline
%         Mixing-time dependence & Yes & Yes & No \\ \hline
%         Method & Transform, Poisson equation and matrix analytics & $M$-step drift & Transform and Poisson equation \\ \hline
%     \end{tabular}}
% % Notes
% {}
% \end{table}
\begin{table}
\TABLE
% Caption
{Comparison of relevant literature and our paper.\label{tab:lit-review}}
% Table
{
\begin{tabularx}{\textwidth}{@{} l Y Y Y @{}}
        \toprule 
        Reference & \citet{Fal-Fal-1999-Markov-G-1-HT}, 
        \citet{Dim-2011-Falin-Generalization} & \citet{Mou-Mag-2024-Switch-Markov} & This paper \\ \specialrule{0.4pt}{0pt}{1pt}
        Model & Single-server queue & Input-queued switch & Parallel servers, JSQ \\ \specialrule{0.4pt}{0pt}{1pt}
        Modulation & Arrivals and service & Arrivals & Arrivals and service \\ \specialrule{0.4pt}{0pt}{1pt}
        Modulation space & Finite & Finite & Countable \\ \specialrule{0.4pt}{0pt}{1pt}
        \# servers and queues & 1 & $n$ servers, $n^2$ queues & $n$ servers and queues \\ \specialrule{0.4pt}{0pt}{1pt}
        SSC? & No & Proved for a specialized heavy-traffic regime & Proved  \\ \specialrule{0.4pt}{0pt}{1pt}
        Mixing-time dependence & No & Yes & No \\ \specialrule{0.4pt}{0pt}{1pt}
        Method & Transform + Poisson eq. + matrix analytics & Transform + $M$-step drift & Transform + Poisson eq. \\ \bottomrule
    \end{tabularx}}
% Notes
{}
\end{table}

To the best of our knowledge, no existing work analyzes a multi-server routing system with both arrival and service rates modulated by a (possibly infinite state space) Markov chain, derives SSC conditions, and obtains explicit heavy-traffic queue-length distributions. Even the simplest single-server works require finite-state-space modulation and substantial matrix analysis.

\subsection{Other time-varying models}
\label{sec:prior-varying}

In addition to Markov-modulated queues, queues with general time-varying dynamics have also been studied. \citet{Whitt-2018-Varying} gives a broad overview of the field. \citet{Bambos-2004-Queueing} study a general stochastic time-inhomogenous multiclass scheduling model, and demonstrate that a family of MaxWeight analogues achieve stability optimality. These models are much broader than our Markov-modulated model, but generally too complex for the performance analysis results, such as distribution results on queue length, that we focus on here.

\section{Model and Main Results}
\label{sec:model}

Consider an ergodic Continuous-Time Markov Chain (CTMC) $\{Z(t):t\in\bR_+\}$ with countable state space $\cZ$, transition rate matrix $Q=[\alpha_{ii'}]_{i,i'\in\cZ}$ and stationary distribution $\pi$. We use $\alpha_{i\bullet}$ to denote the rate at which the Markov chain leaves state $i\in\cZ$, that is,
\begin{align*}
    \alpha_{i\bullet}\defn \sum_{i'\in\cZ,\,i'\neq i} \alpha_{ii'}.
\end{align*}
Note that $\alpha_{ii}=-\alpha_{i\bullet}$ by definition of the transition rate matrix. 

We refer to the CTMC $\{Z(t):t\in \bR_+\}$ as the \textit{modulating Markov chain}, as its states control the arrival and service rate of the queueing system we study.

We consider a load-balancing system operating in continuous time where arriving jobs are routed according to Join-the-Shortest-Queue (JSQ), that is, arriving jobs are routed to the server with the least number of jobs (breaking ties at random). There are $n$ heterogeneous servers, each with an infinite buffer. Once jobs are assigned to one of them, they wait in the corresponding line until processed and they are not allowed to jump to another server's line. Let $\vq(t)$ be the vector of queue lengths at time $t$, where $q_j(t)$ represents the number of jobs at server $j$, including any in service.

The inter-arrival and service times are exponentially distributed, and the rates depend on the state of the modulating Markov chain $\{Z(t):t\in\bR_+\}$. Specifically, when $Z(t)=i$, the arrival rate to the system is $\lambda_i$, and the service rate of server $j$ is $\mu_{ij}$, for each $j\in\{1,\ldots,n\}$. Then, the state of the system is the tuple $(Z(t),\vq(t))$.

We assume there exist constants $\lambda_{\max}<\infty$ and $\mu_{\max}<\infty$ such that $\lambda_i<\lambda_{\max}$ and $\mu_{i,j}<\mu_{\max}$ for all $i\in\cZ$ and all $j\in\{1,\ldots,n\}$. We define the total service rate in state $Z(t)=i$ by
\begin{align*}
    \mu_{i,\Sigma} \defn \sum_{j=1}^n \mu_{i,j},
\end{align*}
the long-run average arrival rate by $\lambda \defn \E{\lambda_i\,|\, i\sim \pi}$, service rate for server $j$ by $\mu_j \defn \E{\mu_{i,j}\,|\, i\sim \pi}$, and total service rate by $\mu_\Sigma\defn \sum_{j=1}^n \mu_j$. 
Note that the subindex $i\bullet$ indicates a sum over the set $\cZ\setminus \{i\}$, while the subindex $\Sigma$ indicates a sum over the set $\{1,\ldots,n\}$.

The mean load of the system is defined as $\rho \defn \lambda/\mu_\Sigma$. In the heavy-traffic limit, we take $\rho\uparrow 1$, or more specifically we take $\lambda \uparrow \mu_{\Sigma}$ while $[\mu_{ij}]_{j=1,\ldots,n}$ and $[\alpha_{i i'}]_{i'\in\cZ}$ stay constant. In the rest of this paper, we refer to the load-balancing system described above as the \textit{JSQ system}.

As a notational convenience, we define the \emph{limiting arrival-rate} $\lambda_j^*$ to server $j$ to equal $\mu_j$, the average service rate at state $j$. We define the corresponding limiting arrival-rate vector $\vlambda^* = (\lambda_j^*)_{j=1}^n$, where $\lambda^*_\Sigma \defn \sum_{j=1}^n \lambda_j^*= \mu_\Sigma$.
We also define the \emph{ideal arrival-rate} vector $\vlambda \defn \vlambda^* \rho$. In the heavy-traffic limit, $\vlambda$ linearly approaches $\vlambda^*$.

For ease of notation, we define the parameter $\epsilon\defn 1-\rho$. Notice that for all $\epsilon>0$, the Markov chain $\{Z(t),\vq(t))\}$ is positive recurrent and has a stationary distribution. We add a tilde on top of the random variables and vectors to denote steady state. For example, $\tvq$ represents a random vector that follows the stationary distribution of $\vq(t)$, and $\tq_j$ is its $j\tth$ element. 

Observe that, as $\epsilon \downarrow 0$, the system approaches null recurrence. As $\epsilon\downarrow 0$, the system receives arrivals at a higher rate and, consequently, JSQ tries to equalize all the queue lengths by sending new arrivals to the shortest queue. This phenomenon is known as State Space Collapse (SSC) and is key to our proof.

The main result of this paper is for the JSQ system described above. Our proof technique involves several steps, so we utilize a single-server queue to illustrate the key novelty of our proof approach. Observe that letting $n=1$, the load-balancing system described above fits a single-server queue. For this system, we omit the second index of the service rate, that is, we use $\mu_i$ (and not $\mu_{i1}$) to denote the service rate when $Z(t)=i$.

A key ingredient in our paper is the Poisson equation, which we define below. 
\begin{definition}\label{def:poisson-eqn}
    Consider a function $f:\cZ\to \bR$ and let $\fbar=\E{f(Z)}$. There exists a function $V_f:\cZ\to \bR$ such that
    \begin{align}\label{eq:poisson-eq-def}
        V_f(i) &= \dfrac{f(i) - \fbar}{\alpha_{i,\bullet}} + V_f^+(i), \\
    \text{where }V_f^+(i) &= \sum_{i'\in\cZ,i'\neq i} \frac{\alpha_{i,i'}}{\alpha_{i,\bullet}} V_f(i').
    \nonumber
    \end{align}
    \cref{eq:poisson-eq-def} is known as the \underline{Poisson equation for $f$}.
\end{definition}

Intuitively, $V_f^+(i)$ represents the value of $V_f$ one transition after state $i$. Note that the solution to the Poisson equation $V_f(i)$ is defined up to additive constants only. Our results hold for any choice of this constant. As shown in our proofs, whenever we obtain a term that depends on the solution to the Poisson equation, it is multiplied by $\epsilon$. Hence, the difference between any pair of solutions is negligible in heavy traffic.

Finally, for consistency with the literature on the transform method, we use the drift notation instead of generators. We properly define the drift of a function over a continuous-time Markov chain below, using the notation from \citet{HL-Mag-2022-JSQ-Many-Server}.
\begin{definition}\label{def:Drift}
    Let $\{X(t):t\in\bR_+\}$ be a CTMC with countable state space $\cX$ and transition rate matrix $G^X$. For a function $V:\cX\to \bR_+$ and any $x\in\cX$, define the \underline{drift of $V$ at $x$} as
    \begin{align*}
        \Delta V(x) \defn \sum_{x'\in\cX, x'\neq x} G^X_{x,x'}\left(V(x')-V(x)\right).
    \end{align*}
    We say that we \underline{set the drift of $V$ to zero in steady state} when we use the property $\E{\Delta V(X(t))}=0$ under stationary distribution, provided that $\sup_{x\in\cX}|G_{x,x}^X|<\infty$ and $\E{V(X(t))}<\infty$ in steady state. 
\end{definition}

\subsection{Main result}

Our main result is the asymptotic distribution of the individual scaled queue lengths $\epsilon q_j$ in the JSQ system described in \cref{sec:model}. Specifically, we show that each individual scaled queue length converges to an exponential random variable with the same mean if the arrival rate is sufficiently large in every state of the modulating Markov chain. Further, we show the rate of convergence. We specify the result and the conditions below.

The theorem provides the heavy-traffic distribution of the individual queue lengths, and a key step in the proof is SSC to a one-dimensional system. JSQ works towards equalizing the queue lengths only through routing decisions at arrivals. Accordingly, our proof requires the arrival rate to be sufficiently high so that this balancing mechanism dominates service-induced fluctuations. As a result, our condition depends on both the arrival rates and the heterogeneity of the servers’ rates across states of the modulating process. Specifically, the following condition guarantees SSC.
\begin{definition}\label{def:condition_lambda_i-mu_i}
    Consider a Markov-modulated JSQ system as defined in \cref{sec:model}. For each $i\in\cZ$, define $\mu_{i\min}=\min\{\mu_{ij}: j\in\{1,\ldots,n\}\}$. 
   The system satisfies the \underline{SSC load condition} if  
    \begin{align*}
        \lambda_i > \mu_{i\Sigma} - n \mu_{i\min}\qquad \forall i\in\cZ.
    \end{align*}
\end{definition}

For each $i\in\cZ$, define the following functions
\begin{align}\label{eq:def-h-k-ell-jsq}
    h(i) = \mu_{i\Sigma} -\lambda_i,\quad k(i)= (\mu_{i\Sigma} - \lambda_i) V_{h}(i), \quad \& \quad \ell(i) = \mu_{i\Sigma},
\end{align}
where $V_{h}(i)$ is the solution to the Poisson equation of $h(i)$. We define $V_{k}(i)$ and $V_{\ell}(i)$ equivalently for $k(i)$ and $\ell(i)$, respectively. 

\begin{theorem}\label{thm:jsq-vq-convergence}
    Consider a JSQ system with Markov-modulated arrival and service rates, as described in \cref{sec:model}. Suppose that the system satisfies the SSC load condition. Let $V_{h}(i)$, $V_{k}(i)$ and $V_{\ell}(i)$ be a solution to the Poisson equation for $h(i)$, $k(i)$ and $\ell(i)$ defined in \cref{eq:def-h-k-ell-jsq}, respectively. Suppose $\E{|V_{h}(i)|^{1+\eta}}<\infty$ for some $\eta>0$, $\E{|V_{k}(i)|^{1+\zeta}}<\infty$ for some $\zeta>0$, and $\E{|V_{\ell}(i)|^{1+\beta}}<\infty$ for some $\beta>0$. 
    
    Then, for all $j\in\{1,\ldots,n\}$ and all $s>0$, the Laplace transform of $\epsilon \tq_j$ satisfies
    \begin{align}\label{eq:mgf-q_j}
        \E{e^{-s\epsilon \tq_j}} = \frac{1}{1 + \frac{s}{n}\left(1 + \frac{k^*}{\mu_\Sigma}\right)} + O\left(\epsilon^{1-\frac{1}{1+\min\{1,\eta\}}}\right),
    \end{align}
    where $k^* = \lim_{\epsilon \downarrow 0} \E{k(i)}$. 
    As a consequence, $\epsilon \tq_j$ converges in distribution to an exponential random variable with mean $\frac{1}{n}(1 + k^*/\mu_\Sigma)$.
    Moreover, $\epsilon \tvq\Longrightarrow \Upsilon \vone$, where $\Upsilon$ is an exponential random variable with mean $\frac{1}{n}(1+\frac{k^*}{\mu_\Sigma})$.
\end{theorem}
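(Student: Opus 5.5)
The plan follows the transform method of \citet{HL-Mag-2020-MGF}, in three stages: state space collapse, a Poisson-equation correction for the modulation, and an MGF computation for the total queue length.

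\textbf{Step 1: state space collapse.} Let $\vq_\perp=\vq-\frac{1}{n}\langle\vq,\vone\rangle\vone$ be the component of the queue vector orthogonal to the line spanned by $\vone$. I will show that $\|\vq_\perp\|$ has a drift that is negative and bounded away from zero outside a bounded set, uniformly in the modulating state $i$.
\begin{itemize}
\item An arrival is routed to a shortest queue, so in state $i$ it pulls $\|\vq_\perp\|$ down at a rate proportional to $\lambda_i$.
\item Service in state $i$ produces imbalance at rates governed by the heterogeneity of $(\mu_{ij})_j$. The worst case is charged as $\mu_{i\Sigma}-n\mu_{i\min}$.
\item The SSC load condition therefore makes the one-step drift in every state $i$ at most $-\delta_i<0$. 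I still need a uniform lower bound on $\delta_i$; I expect to obtain it from the boundedness of $\lambda_i$ and $\mu_{ij}$, possibly using $\lambda_{\max}$ and $\mu_{\max}$.
\item Since the drift condition holds pointwise in $i$, no mixing-time argument is needed. A Hajek-type bound then gives $\E{e^{\theta\|\tvq_\perp\|}}=O(1)$ uniformly in $\epsilon$, so $\epsilon\|\tvq_\perp\|\to0$ with all moments bounded.
\end{itemize}

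\textbf{Step 2: Poisson-equation correction.} This follows the single-server argument (\cref{thm:ssq-poisson-lemma}). Work with the total $q_\Sigma=\langle\vq,\vone\rangle$ and the test function $e^{\theta\epsilon q_\Sigma}$ with $\theta=-s<0$, and set its drift to zero in steady state. Expanding to second order in $\epsilon$ produces:
\begin{itemize}
\item a linear term of the form $\E{(\lambda_i-\mu_{i\Sigma})e^{\theta\epsilon\tq_\Sigma}}$;
\item an unused-service term $\E{\sum_j\mu_{ij}\ind{\tq_j=0}(e^{-\theta\epsilon}-1)e^{\theta\epsilon \tq_\Sigma}}$;
\item a quadratic term $\frac{\theta^2\epsilon^2}{2}\E{(\lambda_i+\mu_{i\Sigma})e^{\theta\epsilon\tq_\Sigma}}$.
\end{itemize}
The linear term is not $-\epsilon\mu_\Sigma$ times the MGF, because $i$ and $\tq_\Sigma$ are correlated. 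To decouple them I will use the drift of $V_h(Z)e^{\theta\epsilon q_\Sigma}$ (the content of \cref{thm:jsq-poisson-lemma}). This converts $\E{(h(i)-\bar h)e^{\theta\epsilon\tq_\Sigma}}$ into $\theta\epsilon\,\E{V_h(i)(\lambda_i-\mu_{i\Sigma})e^{\theta\epsilon\tq_\Sigma}}$ plus higher-order terms and boundary terms. The new term equals $-\theta\epsilon\,\E{k(i)e^{\theta\epsilon\tq_\Sigma}}$. Applying the same step again with $V_k$ replaces $k(i)$ by $\E{k(i)}\to k^*$, and applying it with $V_\ell$ handles $\ell(i)=\mu_{i\Sigma}$ in the quadratic term, which becomes $\theta^2\epsilon^2\mu_\Sigma\,\E{e^{\theta\epsilon\tq_\Sigma}}$ to leading order.

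\textbf{Step 3: assembling the limit.} Collecting terms and dividing by $\epsilon^2$ gives
\[
\E{e^{-s\epsilon\tq_\Sigma}}\bigl(s\mu_\Sigma+s^2(\mu_\Sigma+k^*)\bigr)\approx s\mu_\Sigma\cdot(\text{boundary term}),
\]
where the boundary term tends to $1$ after the unused-service identity is applied at $\theta\to0$. This yields $\E{e^{-s\epsilon\tq_\Sigma}}\to 1/\bigl(1+s(1+k^*/\mu_\Sigma)\bigr)$. By Step 1, $\epsilon\tq_j=\frac{1}{n}\epsilon\tq_\Sigma+O(\epsilon\|\tvq_\perp\|)$, which gives \cref{eq:mgf-q_j} and the joint statement $\epsilon\tvq\Rightarrow\Upsilon\vone$.

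\textbf{Error control and main obstacle.} Terms like $\epsilon\,\E{V_h(i)\ind{\tq_j=0}}$ are handled by Hölder's inequality with exponents $1+\eta$ and its conjugate, together with $\Prob{\tq_j=0}=O(\epsilon)$ (from the $\vq$-drift and SSC). This gives an error of order $\epsilon^{1-1/(1+\min\{1,\eta\})}$, which is the rate in the statement; the moment assumptions on $V_k$ and $V_\ell$ play the same role. The main obstacle is the JSQ boundary term. Unused service at an individual server $j$ is multiplied by $V_h(i)$ under a non-product stationary law, and bounding $\E{\ind{\tq_j=0}}$ together with the cross terms with $\tvq_\perp$ requires the SSC tail bound and Hölder's inequality to interact carefully.
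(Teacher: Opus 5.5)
Your overall route is the paper's: a state-by-state drift bound on $\|\vq_\perp\|$ fed into a Hajek-type lemma, the covariance lemma from the drift of $V_f(i)e^{-s\epsilon q_\Sigma}$ applied in turn to $h$, $k$, $\ell$, and the exponential test function on $q_\Sigma$. Your second-order expansion is the paper's identity before it divides by $e^{-s\epsilon}-1$, and your assembled equation is algebraically right.

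The genuine gap is the boundary term in Step 3. Letting $\theta\to0$ only yields $\E{\sum_j\mu_{ij}\ind{\tq_j=0}}=\mu_\Sigma\epsilon$. To get $\E{e^{-s\epsilon\tq_\Sigma}\sum_j\mu_{ij}\ind{\tq_j=0}}=\mu_\Sigma\epsilon\,(1+o(1))$ you also need $e^{-s\epsilon\tq_\Sigma}\approx 1$ on $\{\tq_j=0\}$. That is the single-server identity $e^{-s\epsilon\tq}\ind{\tq=0}=\ind{\tq=0}$, and it fails for JSQ, because one empty queue does not force $\tq_\Sigma=0$.

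The missing idea is that on $\{\tq_j=0\}$ one has $\tq_\Sigma=-n\tq_{\perp j}$, so $e^{-s\epsilon\tq_\Sigma}\ind{\tq_j=0}=e^{s\epsilon n\tq_{\perp j}}\ind{\tq_j=0}$. The paper then bounds $\E{\sum_j\mu_{ij}\ind{\tq_j=0}(e^{s\epsilon n\tq_{\perp j}}-1)}$ by Cauchy--Schwarz. One factor is controlled by $\E{\sum_j\mu_{ij}\ind{\tq_j=0}}=\mu_\Sigma\epsilon$, the other by the SSC fourth-moment and exponential-moment bounds, giving $O(\epsilon^{3/2})$. This is the only place SSC enters the computation of $\E{e^{-s\epsilon\tq_\Sigma}}$. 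After dividing by $\mu_\Sigma\epsilon$ it produces the $O(\sqrt{\epsilon})$ error, which is the source of the $\min\{1,\eta\}$ in the rate.

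You have placed the difficulty in the wrong term. The $V_f$-weighted unused service inside the covariance lemma needs no SSC: H\"older plus the identity above gives $O(\epsilon^{2-1/(1+\xi)})$. Also, H\"older alone yields only $\epsilon^{1-1/(1+\eta)}$ in the final rate, not $\epsilon^{1-1/(1+\min\{1,\eta\})}$.

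There are several smaller points.
\begin{enumerate}
\item Keep the $\mu_{ij}$ weights in the H\"older step, as the paper does. $\Prob{\tq_j=0}=O(\epsilon)$ does not follow from the drift of $q_\Sigma$ without a positive lower bound on the $\mu_{ij}$, which is not assumed. The weighted identity is exactly what H\"older needs.
\item In Step 1, a uniform drift constant cannot come from the upper bounds $\lambda_{\max},\mu_{\max}$. Over countably many states the margins $\lambda_i-\mu_{i\Sigma}+n\mu_{i\min}$ may accumulate at $0$. The paper's choice $\gamma=\frac12\min_{i}\lambda'_{i\min}$ tacitly assumes their infimum is positive (automatic for finite $\cZ$); you should state this as an assumption.
\item For the rate with $k^*$ in place of $\E{k(i)}$, you need $|\E{k(i)}-k^*|=O(\epsilon)$, not just convergence. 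The paper gets this from the linear dependence of $h$ and $V_h$ on $\epsilon$.
\item Your last step is cleaner than the paper's. Since $|e^{-x}-e^{-y}|\le|x-y|$ for $x,y\ge0$, the first-moment SSC bound gives $|\E{e^{-s\epsilon\tq_j}}-\E{e^{-s\epsilon\tq_\Sigma/n}}|\le s\epsilon\,\E{\|\tvq_\perp\|}=O(\epsilon)$. This beats the $O(\epsilon\ln\frac{1}{\epsilon})$ the paper extracts from the tail bound, though both are absorbed in the stated rate.
\end{enumerate}
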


Our result shows that $\epsilon \tq_j$ converges to an exponential random variable if the SSC load condition is satisfied. This condition ensures that, in each state of the modulating Markov chain, the arrival rate is sufficiently large. In this paper, we don't require any conditions on the mixing time of $\{Z(t)\}_t$. Instead, we make sure that the system has sufficiently high load in every state, so that JSQ can balance the queue lengths. In particular, the lower bound for the total arrival rate $\lambda_i$ represents the range between the total service rate $\mu_{i\Sigma}$ and the worst-case service rate, that is, the total service rate if all the servers had the smallest rate in state $i$. If we rearrange the terms in this condition, we obtain 
\begin{align*}
    \rho_i \defn \frac{\lambda_i}{\mu_{i\Sigma}} > 1 - \frac{n\mu_{i\min}}{\mu_{i\Sigma}}.
\end{align*}
Note that for homogeneous servers, the right-hand side is 0. Hence, this condition also establishes a minimum arrival rate so that JSQ will balance the queue lengths for the given heterogeneity of the servers.

Additionally, we provide the asymptotic Laplace transform of the scaled queue lengths $\epsilon q_j$ as well as the rate of convergence for small $\epsilon$. The rate of convergence specifies the error between the limiting value, which corresponds to the Laplace transform of an exponential random variable, and the pre-limit value $\E{e^{-s\epsilon \tq_j}}$. 

As we demonstrate in the proof, this error term arises when applying the transform method to the scaled average queue lengths $\epsilon \tq_\Sigma/n$. In this step, we analyze the generator of the function $\varphi_s(\vq,i)=e^{-s\epsilon q_\Sigma/n}$. In the traditional transform method (where the parameters are fixed), a key challenge is dealing with the service rate, which depends on which queues are nonempty. Hence, we use SSC to approximate the $n$-dimensional system by a single-server queue. This approximation yields an error of $O(\sqrt{\epsilon})$. 

One of our contributions is generalizing the transform method to Markov-modulated parameters, and we use the Poisson equation to handle the dependence between the vector of queue lengths and the Markov-modulated arrival and service rates. The use of the Poisson equation yields an error of $O(\epsilon^{1-\frac{1}{1+\eta}})$. Hence, the error term $\epsilon^{1-\frac{1}{1+\min\{1,\eta\}}}$ comes from analyzing the drift of $\varphi_s(\vq,i)=e^{-s\epsilon q_\Sigma/n}$ and dealing with the product of dependent random variables.

The proof of \cref{thm:jsq-vq-convergence} is provided in \cref{sec:roc}. We break the proof into a series of smaller results that are proved in \cref{sec:jsq}, and are of independent interest.

In \cref{sec:empirical} we provide a simulated example to build an intuitive meaning to our result. Specifically, we show that our result holds under various mixing time assumptions. 

\section{Proof Roadmap and Methodological Contributions}
\label{sec:proof-overview}

In this section, we discuss an outline of our proof, starting with the key ingredients in the transform method.

% 1. What is Transform method --> Show M/M/1
% 2. What happens with Markov-modulated parameters: cross-terms between modulating Markov chain and queue lengths. Not independent anymore!
% 3. Use of Poisson eqn to deal with cross-terms via computation of covariance
% 4. SSC discussion
% 5. How errors arise and why they're negligible in heavy traffic

\paragraph{The transform method:}
The transform method is a drift-based approach to compute the Laplace transform (or moment generating function) of the queue lengths. A key step is in handling the so-called \textit{unused service terms}, which are cross-terms arising from the dynamics of the multiple queue lengths. Specifically, they arise because a job can leave the system only when the corresponding queue is nonempty. Managing these cross terms is the key difficulty in analyzing the distribution of queueing systems. In the diffusion limits approach, for example, the scaled queue lengths converge to \textit{Reflected} Brownian motion (and not simply Brownian motion) because of the unused-service terms. 

The transform method neatly deals with this challenge by observing that for any function $\phi(q)$ of the queue lengths in a single-server queue, we have $\phi(q)\ind{q=0}=\phi(0)\ind{q=0}$. 
This key observation cleans all the cross terms, and the remaining of the proof approach relies on SSC (which we discuss below). To make this paper self-contained, we demonstrate the transform method approach for an $M/M/1$ queue in Appendix~\ref{app:transform-MM1}. For an $M/M/1$ queue, the transform method yields the stationary distribution for all traffic intensities in the capacity region. However, for more intricate models, one obtains bounds, which can then be used to compute the Laplace transform in heavy traffic.

\paragraph{Need for novel techniques:}
In systems with constant parameters, the only cross-terms are due to the unused-service, as stated above. These explicitly depend on the queue length(s) and hence, they can be handled algebraically. When the parameters are Markov modulated, there are new cross terms that depend on the arrival and service rates, and the queue lengths. However, handling these new cross terms is more challenging because the dependence between the parameters and the queue lengths is not explicit. 

We tackle this challenge via the Poisson equation. Specifically, we use the solution to the Poisson equation to characterize the covariance between the Markov-modulated parameters and the queue lengths. 

\paragraph{State-Space Collapse:}
For a single-server queue, the use of the transform method and the Poisson equation are sufficient to complete the proof (as shown in \cref{sec:ssq}). However, systems with multiple queues are more challenging as the queue lengths typically depend on each other. In the JSQ system studied here, the queue lengths are tied because the routing policy sends new arrivals to the shortest queue. Characterizing the dependence among queues becomes intractable for small values of $n$. We make the heavy-traffic approximation and show that in heavy traffic, all the queue lengths are similar. Therefore, studying a single-server queue yields a good approximation to the multi-queue system.

\paragraph{Error terms:}
As part of our methodology, we obtain error terms that are negligible in heavy traffic. These arise from our approach to handling cross terms in the drift of the exponential test function. Hence, we have error terms arising from our computation of the covariance between the modulating Markov chain and the queue lengths via the Poisson equation, as well as from SSC. An essential aspect of our research approach is the careful handling of these error terms to obtain asymptotically tight bounds. Our theorems characterize the magnitude of these error terms with respect to the heavy-traffic parameter $\epsilon$ and show that they are negligible as $\epsilon\downarrow 0$.

\section{Single-Server Queue Warm-Up}
\label{sec:ssq}

The proof of \cref{thm:jsq-vq-convergence}, our main result, involves several steps, including the use of the Poisson equation, SSC and the convergence of the ``collapsed'' queue lengths to the exponential distribution. One of our key methodological contributions is the use of the Poisson equation to address the dependence between the arrival and service rates, and the queue length. To isolate this contribution of our paper, we showcase the approach in the simplest system: a single-server queue. Many of the ideas presented in this section will reappear in the proof of \cref{thm:jsq-vq-convergence}.

We start with a key theorem relating the solution to the Poisson equation to the Laplace transform of the scaled queue lengths.

\subsection{Poisson Equation for the Single-Server Queue}

The following theorem provides an expression for the covariance between the queue length and functions of the modulating Markov chain, and is one of the key methodological contributions of this paper. Recall that $q(t)$ represents the number of jobs in the system at time $t$, and $(q(t),Z(t))$ is a positive recurrent CTMC for all $\epsilon=1-\rho=1-\lambda/\mu>0$. We use $\tq$ for a random variable that follows the stationary distribution of the queue length.

\begin{theorem}\label{thm:ssq-poisson-lemma}
    Let $f:\cZ\to \bR$ be any function such that a solution $V_f(i)$ to the Poisson equation (as in \cref{def:poisson-eqn}) exists and satisfies $\E{|V_f(i)|^{1+\xi}}<\infty$ for some $\xi>0$. Then, for all $s>0$,
    \begin{align}\label{eq:ssq-poisson-lemma-eqn}
        &\text{Cov}(e^{-s \epsilon \tq}, f(i)) = (e^{-s\epsilon}-1) \E{e^{-s\epsilon \tq} V_f(i)\left(\lambda_i-\mu_i\right)} + O\left(\epsilon^{2-\frac{1}{1+\xi}}\right).
    \end{align}
\end{theorem}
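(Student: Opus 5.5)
The plan is to use the product test function $W(q,i) \defn e^{-s\epsilon q}\,V_f(i)$ and set its drift to zero in steady state, in the sense of \cref{def:Drift}. The key point is that the modulation part of the drift of $W$ collapses through the Poisson equation into exactly the centred term $f(i)-\fbar$. That term, multiplied by $e^{-s\epsilon q}$, produces the covariance on the left-hand side of \cref{eq:ssq-poisson-lemma-eqn}.

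\textbf{Step 1: compute the drift.} There are three kinds of transitions out of $(q,i)$.
\begin{itemize}
\item Modulation transitions $i\to i'$ keep $q$ fixed and contribute
\begin{align*}
e^{-s\epsilon q}\sum_{i'\neq i}\alpha_{ii'}\bigl(V_f(i')-V_f(i)\bigr) = e^{-s\epsilon q}\,\alpha_{i\bullet}\bigl(V_f^+(i)-V_f(i)\bigr) = -e^{-s\epsilon q}\bigl(f(i)-\fbar\bigr),
\end{align*}
where the last equality is \cref{eq:poisson-eq-def}.
\item Arrivals contribute $\lambda_i(e^{-s\epsilon}-1)e^{-s\epsilon q}V_f(i)$.
\item Departures contribute $\mu_i(e^{s\epsilon}-1)e^{-s\epsilon q}V_f(i)\ind{q>0}$.
\end{itemize}
Taking stationary expectations and noting that $\E{e^{-s\epsilon\tq}(f(i)-\fbar)}=\text{Cov}(e^{-s\epsilon\tq},f(i))$, we obtain
\begin{align*}
\text{Cov}(e^{-s\epsilon\tq},f(i)) = \E{e^{-s\epsilon\tq}V_f(i)\bigl(\lambda_i(e^{-s\epsilon}-1)+\mu_i(e^{s\epsilon}-1)\ind{\tq>0}\bigr)}.
\end{align*}

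\textbf{Step 2: algebraic reduction.} Write $e^{s\epsilon}-1=-(e^{-s\epsilon}-1)+O(\epsilon^2)$ and $\ind{q>0}=1-\ind{q=0}$. Using $e^{-s\epsilon q}\le 1$, $\mu_i<\mu_{\max}$ and $\E{|V_f(i)|}<\infty$, the right-hand side becomes
\begin{align*}
(e^{-s\epsilon}-1)\,\E{e^{-s\epsilon\tq}V_f(i)(\lambda_i-\mu_i)} + (e^{-s\epsilon}-1)\,\E{\mu_iV_f(i)\ind{\tq=0}} + O(\epsilon^2).
\end{align*}
The first term is the main term in \cref{eq:ssq-poisson-lemma-eqn}. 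Since $e^{-s\epsilon}-1=O(\epsilon)$, it remains to show that the unused-service term satisfies $\E{\mu_iV_f(i)\ind{\tq=0}}=O(\epsilon^{1-\frac{1}{1+\xi}})$.

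\textbf{Step 3: the unused-service term.} Setting the drift of $q$ itself to zero gives $\E{\lambda_i}=\E{\mu_i\ind{\tq>0}}$, hence $\E{\mu_i\ind{\tq=0}}=\mu-\lambda=\epsilon\mu$. I would not try to bound $\Prob{\tq=0}$ directly, since $\mu_i$ may be arbitrarily small in some states. Instead, apply Hölder's inequality with respect to the weighted measure $\mu_i\ind{\tq=0}\,d\mathbb{P}$, with exponents $1+\xi$ and $\frac{1+\xi}{\xi}$:
\begin{align*}
\E{\mu_i|V_f(i)|\ind{\tq=0}} \le \bigl(\E{\mu_i\ind{\tq=0}}\bigr)^{\frac{\xi}{1+\xi}}\bigl(\mu_{\max}\E{|V_f(i)|^{1+\xi}}\bigr)^{\frac{1}{1+\xi}} = O\bigl(\epsilon^{\frac{\xi}{1+\xi}}\bigr).
\end{align*}
Multiplying by $e^{-s\epsilon}-1=O(\epsilon)$ yields $O(\epsilon^{2-\frac{1}{1+\xi}})$. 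This term dominates the $O(\epsilon^2)$ remainder from Step 2, which gives the claimed error.

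\textbf{Main obstacle: justifying zero drift.} The hard part is justifying $\E{\Delta W}=0$. \cref{def:Drift} requires bounded total transition rates, but the $\alpha_{i\bullet}$ may be unbounded on a countable $\cZ$. The expectation $\E{|W|}\le\E{|V_f|}<\infty$ is fine, but one also needs $\E{\alpha_{i\bullet}|V_f(i)|}$ and $\E{\alpha_{i\bullet}|V_f^+(i)|}$ finite. By \cref{eq:poisson-eq-def}, $\alpha_{i\bullet}(V_f(i)-V_f^+(i))=f(i)-\fbar$, so the net modulation term is integrable whenever $f$ is. I would first try a truncation argument: replace $V_f$ by $V_f\wedge M\vee(-M)$, apply the stationary identity for bounded functions (which holds for ergodic CTMCs whenever the generator applied to the function is integrable), and pass $M\to\infty$ by dominated convergence using the $(1+\xi)$-moment. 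If that fails, the fallback is to uniformize on states with $\alpha_{i\bullet}\le A$ and let $A\to\infty$.
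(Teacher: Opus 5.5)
Your proposal is correct and follows essentially the same route as the paper's proof, which is written out for the JSQ analogue \cref{thm:jsq-poisson-lemma}. Both set the drift of $e^{-s\epsilon q}V_f(i)$ to zero, let the Poisson equation turn the modulation part into $f(i)-\fbar$, split $\ind{q>0}=1-\ind{q=0}$, and bound the unused-service term by H\"older combined with $\E{\mu_i\ind{\tq=0}}=\mu\epsilon$. On your ``main obstacle'': the paper simply invokes \cref{def:Drift} and implicitly assumes $\sup_i\alpha_{i\bullet}<\infty$ (as in condition (C3) of the SSC proof); under bounded rates, $\E{|V_f(i)|}<\infty$ already justifies zero drift, so no truncation is needed.
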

Notice that $\E{|V_f(i)|^{1+\xi}}<\infty$ implies $\E{V_f(i)e^{-s\epsilon \tq}}<\infty$ because $0<e^{-s\epsilon \tq}\leq 1$ for all $s,\epsilon>0$.

When the system parameters are constant (as opposed to Markov-modulated), the function $f(i)=f$ is constant and consequently, $f$ is independent of the queue lengths. In such a case, the solution to the Poisson equation $V_f(i)$ is constant and \cref{eq:ssq-poisson-lemma-eqn} is trivial. Therefore, the right-hand side of \cref{eq:ssq-poisson-lemma-eqn} represents the error induced by assuming that $f(i)$ and $e^{-s\epsilon \tq}$ are uncorrelated. As mentioned above, a significant challenge in computing the Laplace transform of the scaled queue lengths, $\epsilon \tq$, is that the arrival and service rates are correlated with the queue length. 

Using the definition of covariance in \cref{eq:ssq-poisson-lemma-eqn} and rearranging terms, we obtain that
\begin{align}\label{eq:ssq-poisson-lemma-eqn-to-use}
    \E{e^{-s\epsilon \tq}f(i)} = \E{e^{-s\epsilon \tq}}\fbar + (e^{-s\epsilon}-1) \E{e^{-s\epsilon \tq} V_f(i)\left(\lambda_i-\mu_i\right)} + O\left(\epsilon^{2-\frac{1}{1+\xi}}\right).
\end{align}
Hence, \cref{thm:ssq-poisson-lemma} provides the error of approximating the Markov-modulated system by one where the parameters and queue lengths are uncorrelated. This error term is key in our approach. In the rest of the paper, we use the version of \cref{thm:ssq-poisson-lemma} presented in \cref{eq:ssq-poisson-lemma-eqn-to-use}.

The proof of \cref{thm:ssq-poisson-lemma} is similar to its equivalent for the JSQ-system (see \cref{thm:jsq-poisson-lemma}), so we omit it.

\subsection{Computing the MGF of the scaled queue length}

Before stating the main theorem of this section, define the following functions:
\begin{align}\label{eq:ssq-def-h-k-ell}
    \hat{h}(i) = \mu_{i}-\lambda_i,\quad \hat{k}(i) = V_{\hat{h}}(i)\left(\mu_i-\lambda_{i}\right),\quad\&\quad \hat{\ell}(i)=\mu_i.
\end{align}

\begin{theorem}\label{thm:ssq-heavy-traffic-distribution}
    Consider a single-server queue with Markov-modulated arrival and service rates, as described in \Cref{sec:model}, that is, a JSQ model with $n=1$. Let $V_{\hat{h}}(i)$, $V_{\hat{k}}(i)$, and $V_{\hat{\ell}}(i)$ be a solution to the Poisson equation for the functions $\hat{h}(i)$, $\hat{k}(i)$ and $\hat{\ell}(i)$ defined in \cref{eq:ssq-def-h-k-ell}, respectively. 
    Suppose $\E{|V_{\hat{h}}(i)|^{1+\eta}}<\infty$ for some $\eta>0$, $\E{|V_{\hat{k}}(i)|^{1+\zeta}}<\infty$ for some $\zeta>0$, and $\E{|V_{\hat{\ell}}(i)|^{1+\beta}}<\infty$ for some $\beta>0$. Then, for all $s>0$, the Laplace transform of $\epsilon \tq$ satisfies
    \begin{align*}
        \E{e^{-s\epsilon \tq}} &= \frac{1}{1+s(1+\hat{k}^*/\mu)} + O\left(\epsilon^{1-\frac{1}{1+\eta}}\right),
    \end{align*}
    where $\hat{k}^* = \lim_{\epsilon\downarrow 0}\E{\hat{k}(i)}$. 
    This implies that $\epsilon \tq\Longrightarrow \Psi$, where $\Psi$ is an exponential random variable with mean $1+\hat{k}^*/\mu$.
\end{theorem}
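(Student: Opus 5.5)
We set the drift of the exponential test function $\varphi_s(q,i)=e^{-s\epsilon q}$ to zero in steady state. This is legitimate because the function is bounded by $1$ and the transition rates are bounded by $\lambda_{\max}+\mu_{\max}$ plus the modulation rates. The modulating chain does not change $q$, so its transitions contribute nothing, and the drift at $(q,i)$ is
\begin{align*}
\Delta\varphi_s(q,i) = \lambda_i e^{-s\epsilon q}(e^{-s\epsilon}-1) + \mu_i e^{-s\epsilon q}(e^{s\epsilon}-1)\ind{q>0}.
\end{align*}
We use the transform-method identity $e^{-s\epsilon q}\ind{q=0}=\ind{q=0}$ and write $\ind{q>0}=1-\ind{q=0}$. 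Dividing by $(e^{-s\epsilon}-1)$ and using $(e^{s\epsilon}-1)/(e^{-s\epsilon}-1)=-e^{s\epsilon}$, taking expectations gives
\begin{align*}
0=\E{e^{-s\epsilon \tq}(\lambda_i-e^{s\epsilon}\mu_i)} + e^{s\epsilon}\E{\mu_i\ind{\tq=0}}.
\end{align*}

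Next we evaluate the unused-service term. Setting the drift of $q$ itself to zero is not directly allowed, since $q$ is unbounded; instead we expand the identity above to first order in $s\epsilon$. Alternatively, we take $s\downarrow 0$ formally, which gives $\E{\mu_i\ind{\tq=0}}=\lambda$, because $\E{\lambda_i-\mu_i\ind{\tq>0}}=0$ follows from the drift of $\min(q,M)$ with $M\to\infty$.

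Write $e^{s\epsilon}=1+s\epsilon+O(\epsilon^2)$. The equation becomes
\begin{align*}
\E{e^{-s\epsilon \tq}(\mu_i-\lambda_i)} + s\epsilon\E{e^{-s\epsilon\tq}\mu_i} = \lambda(1+s\epsilon) + O(\epsilon^2).
\end{align*}
Both expectations involve products of functions of $i$ with $e^{-s\epsilon\tq}$, and we decouple them with \cref{thm:ssq-poisson-lemma} in the form \cref{eq:ssq-poisson-lemma-eqn-to-use}.

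For the second term we apply it with $f=\hat\ell$. The correction term there is $O(\epsilon)$ and is multiplied by $s\epsilon$, so it is negligible. This gives $s\epsilon\,\mu\,\E{e^{-s\epsilon\tq}}+O(\epsilon^{2})$; the $\beta$ moment condition controls the remainder.

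For the first term we apply the lemma with $f=\hat h$, whose mean is $\mu-\lambda=\epsilon\mu$. This gives
\begin{align*}
\epsilon\mu\E{e^{-s\epsilon\tq}} + (e^{-s\epsilon}-1)\E{e^{-s\epsilon\tq}V_{\hat h}(i)(\lambda_i-\mu_i)} + O(\epsilon^{2-\frac{1}{1+\eta}}).
\end{align*}
The middle term equals $s\epsilon\,\E{e^{-s\epsilon\tq}\hat k(i)}+O(\epsilon^2)$. We apply the lemma a second time, with $f=\hat k$, to replace it by $s\epsilon\,\E{\hat k(i)}\E{e^{-s\epsilon\tq}}+O(\epsilon^2)$; the $\zeta$ moment condition controls this remainder.

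Collecting terms, dividing by $\epsilon\mu$, and using $\lambda=\mu(1-\epsilon)$ on the right-hand side, we obtain
\begin{align*}
\E{e^{-s\epsilon\tq}}\bigl(1+s+s\E{\hat k(i)}/\mu\bigr) = 1 + O(\epsilon^{1-\frac{1}{1+\eta}}).
\end{align*}
Finally, $\E{\hat k(i)}\to\hat k^*$. The dependence on $\epsilon$ enters only through $\lambda_i$, and I would argue, or assume, that this convergence happens at a rate absorbed in the error. Inverting the bracket gives the stated Laplace transform. Since $1/(1+s(1+\hat k^*/\mu))$ is the Laplace transform of an exponential random variable, pointwise convergence for all $s>0$ yields convergence in distribution by the continuity theorem.

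The main obstacle is the rigorous handling of the unused-service term together with the error bookkeeping. The term $\E{\mu_i\ind{\tq=0}}$ involves the modulated rate and must equal $\lambda$ exactly; I would justify this with a truncated linear test function. I also need to check that every remainder is $O(\epsilon\cdot\epsilon^{1-\frac{1}{1+\eta}})$ before the division by $\epsilon$. This requires $\hat k^*$ to be finite, and in particular requires $\hat h$ to have mean of order $\epsilon$, so that the $\lambda_i$-dependence of $V_{\hat h}$ stays under control.
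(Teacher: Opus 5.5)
Your route is the paper's: zero drift of $e^{-s\epsilon q}$, the identity $\E{e^{-s\epsilon\tq}(e^{s\epsilon}\mu_i-\lambda_i)}=e^{s\epsilon}\E{\mu_i\ind{\tq=0}}$, a first-order expansion of $e^{s\epsilon}$, and three applications of \cref{thm:ssq-poisson-lemma} with $\hat h$, $\hat k$, $\hat\ell$. Two points need fixing.

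\textbf{The unused-service term is evaluated incorrectly.} The identity you invoke, $\E{\lambda_i-\mu_i\ind{\tq>0}}=0$, gives $\E{\mu_i\ind{\tq>0}}=\lambda$. Hence $\E{\mu_i\ind{\tq=0}}=\mu-\lambda=\mu\epsilon$, not $\lambda$; you swapped the two indicators. As written, your displayed equation is false. After your Poisson-equation steps its left-hand side is $O(\epsilon)$, while the right-hand side $\lambda(1+s\epsilon)$ is of order one. Dividing by $\epsilon\mu$ would then not produce $1+O(\epsilon^{1-\frac{1}{1+\eta}})$. The right-hand side must be $\mu\epsilon(1+s\epsilon)+O(\epsilon^2)$, and with that correction your final bracket identity follows exactly as in the paper.

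You also do not need a truncated test function for this value. The identity holds for every $s>0$, with integrands bounded by $\lambda_{\max}+e^{s\epsilon}\mu_{\max}$. Letting $s\downarrow 0$ is therefore rigorous by bounded convergence, not merely formal, and gives $\E{\mu_i-\lambda_i}=\E{\mu_i\ind{\tq=0}}$ directly. This is the paper's ``set $s=0$'' step.

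\textbf{The replacement of $\E{\hat k(i)}$ by $\hat k^*$ is left open.} You leave it as something to ``argue or assume''; the paper closes it. With $\mu_i$ and the rates $\alpha_{ii'}$ independent of $\epsilon$, the paper takes $\hat h(i)=\mu_i-\lambda_i$ to be affine in $\epsilon$. The Poisson equation is linear with $\epsilon$-independent coefficients, so $V_{\hat h}(i)$ is also affine in $\epsilon$. Then $\hat k=V_{\hat h}\hat h$ is quadratic in $\epsilon$, so $\E{\hat k(i)}$ is a quadratic polynomial in $\epsilon$ whose constant term is $\hat k^*$. Hence $|\E{\hat k(i)}-\hat k^*|=O(\epsilon)$, which is absorbed into the existing error term.

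With these two fixes your argument is complete and coincides with the paper's proof.
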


\begin{proof}{Proof of \cref{thm:ssq-heavy-traffic-distribution}.}
Let $s>0$ and define the test function $\varphi_s(q,i) = e^{-s \epsilon q}$. We start by computing its drift:
\begin{align*}
    \Delta \varphi_s (q,i) &= \lambda_i \left(e^{-s\epsilon (q+1)}-e^{-s\epsilon q} \right) + \ind{q>0} \mu_i \left(e^{-s\epsilon (q-1)}-e^{-s\epsilon q} \right) \\
    &= (e^{-s\epsilon} -1) \left( e^{-s\epsilon q}(\lambda_i - \mu_i e^{s\epsilon}) + \mu_i e^{s\epsilon}e^{-s\epsilon q}\ind{q=0} \right),
\end{align*}
where we rearranged terms using the fact that $\ind{q>0}=1-\ind{q=0}$.

Since $\E{\varphi_s(\tq,i)}\leq 1<\infty$, we can set its drift to zero. Taking expectation with respect to the stationary distribution, using that $\E{\Delta \varphi_s(\tq,i)}=0$ in the expression above and rearranging terms, we obtain:
\begin{align}\label{eq:ssq-service-mgf-to-solve}
    \E{e^{-s\epsilon \tq}(\mu_i e^{s\epsilon} - \lambda_i)} =  e^{s\epsilon} \E{\mu_i e^{-s\epsilon \tq}\ind{\tq=0}}.
\end{align}

To compute the right-hand side of \eqref{eq:ssq-service-mgf-to-solve}, observe that $e^{-s\epsilon \tq}\ind{\tq=0}=\ind{\tq=0}$ by definition of the indicator function. Then, it remains to compute $\E{\mu_i\ind{\tq=0}}$, which we do by setting $s=0$ in \eqref{eq:ssq-service-mgf-to-solve}. Specifically, noticing that $\E{\mu_i-\lambda_i}=\mu\epsilon$, we obtain:
\begin{align}\label{eq:ssq-service-prob-empty-queue}
    \E{\mu_i\ind{\tq=0}} = \mu\epsilon.
\end{align}

The left-hand side of \eqref{eq:ssq-service-mgf-to-solve} needs more work, as there are cross terms between the modulating Markov chain and the queue length. Specifically, $\lambda_i$ and $\mu_i$ are not independent of $q$. For ease of notation, define
\begin{align*}
    \hat{\cT} \defn e^{-s\epsilon \tq}\left(\mu_i e^{s\epsilon}-\lambda_i\right),
\end{align*}
and observe
\begin{align*}
    \hat{\cT}
    &\stackrel{(a)}{=} e^{-s\epsilon \tq} \left(\mu_i (1+s\epsilon+O(\epsilon^2) ) - \lambda_i\right) \stackrel{(b)}{=} e^{-s\epsilon \tq} \left(\mu_i-\lambda_i\right) + e^{-s\epsilon \tq}\mu_i\epsilon s + O(\epsilon^2),
\end{align*}
where $(a)$ holds by taking the Taylor expansion of $e^{s\epsilon}$ up to first order; and $(b)$ by rearranging terms and because $\mu_i<\mu_{\max}$ and $e^{-s\epsilon \tq}\leq 1$ for all $s>0$.  Then,
\begin{align}\label{eq:expected-T}
    \E{\hat{\cT}} = \E{e^{-s\epsilon \tq}(\mu_i-\lambda_i)} + s\epsilon \E{e^{-s\epsilon \tq}\mu_i} + O(\epsilon^2).
\end{align}

We now bound each of the first two terms using \cref{thm:ssq-poisson-lemma} to handle cross terms between the Markov-modulated parameters $\lambda_i,\mu_i$ and the queue length. 

For the first term, we set $f(i)=\hat{h}(i)=\mu_i-\lambda_i$ (as defined above). Noticing that $\E{\hat{h}(i)}=\mu-\lambda=\mu\epsilon$, we obtain
\begin{align}
    \E{e^{-s\epsilon \tq}(\mu_i-\lambda_i)} 
    &= \E{e^{-s\epsilon \tq}}\mu \epsilon + (e^{-s\epsilon}-1)\E{e^{-s\epsilon \tq}V_{\hat{h}}(i)(\lambda_i-\mu_i)} + O\left(\epsilon^{2 - \frac{1}{1+\eta}}\right) \nonumber \\
    &\stackrel{(a)}{=} \mu\epsilon \E{e^{-s \epsilon \tq}} + s\epsilon \E{e^{-s\epsilon \tq}V_{\hat{h}}(i)(\mu_i-\lambda_i)} + O\left(\epsilon^{2 - \frac{1}{1+\eta}}\right), \label{eq:ssq-poisson-h} 
\end{align}
where 
$(a)$ holds after taking Taylor expansion of $e^{-s\epsilon}$ up to second order, and noticing that $\E{e^{-s\epsilon \tq}V_{\hat{h}}(i)(\lambda_i-\mu_i)}$ is finite because $e^{-s\epsilon \tq}\leq 1$, $\lambda_i\leq \lambda_{\max}$, $\mu_i<\mu_{\max}$ and $\E{|V_h(i)|}<\infty$ by assumption of the theorem. The $O(\epsilon^2)$ term arising from the Taylor expansion goes into the $O\left(\epsilon^{2 - \frac{1}{1+\eta}}\right)$ term. 

We now compute the second term in \eqref{eq:ssq-poisson-h} using \cref{thm:ssq-poisson-lemma} with $f(i)=\hat{k}(i)=V_{\hat{h}}(i)(\mu_i-\lambda_i)$. We obtain
\begin{align}
    & \E{e^{-s\epsilon \tq}V_{\hat{h}}(i)(\mu_i-\lambda_i)} \nonumber \\
    &= \E{e^{-s\epsilon \tq}}\E{V_{\hat{h}}(i)(\mu_i-\lambda_i)} + (e^{-s\epsilon}-1)\E{e^{-s\epsilon \tq}V_{\hat{k}}(i)(\lambda_i-\mu_i)} + O(\epsilon^{2-\frac{1}{1+\zeta}}) \nonumber \\
    &\stackrel{(*)}{=} \E{e^{-s\epsilon \tq}}\E{\hat{k}(i)} + O(\epsilon), \label{eq:ssq-poisson-k}
\end{align}
where $(*)$ uses that $k(i)=V_h(i)(\mu_i-\lambda_i)$ and bounds the expectation multiplying $(e^{-s\epsilon}-1)$ following a similar argument as the computation of equality $(a)$ in \eqref{eq:ssq-poisson-h}.

Using \eqref{eq:ssq-poisson-k} in \eqref{eq:ssq-poisson-h}, we obtain
\begin{align}\label{eq:ssq-poisson-hk}
    \E{e^{-s\epsilon \tq}(\mu_i-\lambda_i)} 
    &= \mu \epsilon \E{e^{-s\epsilon \tq}} + s\epsilon \E{e^{-s\epsilon \tq}}\E{\hat{k}(i)} + O\left(\epsilon^{2 - \frac{1}{1+\eta}}\right),
\end{align}
which finalizes the computation of the first term of \eqref{eq:expected-T}. We now compute the second expectation, namely $\E{e^{-s\epsilon \tq}\mu_i}$. To this end, we use \cref{thm:ssq-poisson-lemma} with $f(i)=\ell(i)=\mu_i$ and obtain
\begin{align}
    \E{e^{-s\epsilon \tq}\mu_i} 
    &= \E{e^{-s\epsilon \tq}}\mu + (e^{-s\epsilon}-1)\E{e^{-s\epsilon \tq} V_\ell(i)(\lambda_i-\mu_i)} + O\left(\epsilon^{2 - \frac{1}{1+\beta}}\right) \nonumber \\
    &\stackrel{(*)}{=} \mu \E{e^{-s\epsilon \tq}} + O(\epsilon), \label{eq:ssq-poisson-l}
\end{align}
where $(*)$ follows by an argument similar to the computation of \eqref{eq:ssq-poisson-k}.

Using \eqref{eq:ssq-poisson-hk} and \eqref{eq:ssq-poisson-l} in \eqref{eq:expected-T}, we obtain
\begin{align}
    \E{\hat{\cT}} &= \mu \epsilon \E{e^{-s\epsilon \tq}} + s\epsilon \E{e^{-s\epsilon \tq}}\E{k(i)} + s\epsilon\mu \E{e^{-s\epsilon \tq}} + O\left(\epsilon^{2 - \frac{1}{1+\eta}}\right) \nonumber \\
    &= \mu\epsilon \E{e^{-s\epsilon \tq}}\left(1 + s\left( 1 + \frac{1}{\mu}\E{\hat{k}(i)} \right) \right) + O\left(\epsilon^{2 - \frac{1}{1+\eta}}\right). \label{eq:ssq-expected-T-solved}
\end{align}

Now we put everything together, that is, we use \eqref{eq:ssq-service-prob-empty-queue} and \eqref{eq:ssq-expected-T-solved} in \eqref{eq:ssq-service-mgf-to-solve}. Rearranging terms algebraically, we obtain
\begin{align*}
    \E{e^{-s\epsilon \tq}} &= \frac{1}{1 + s\left( 1 + \frac{\E{\hat{k}(i)}}{\mu} \right)} + O\left(\epsilon^{1-\frac{1}{1+\eta}}\right).
\end{align*}

The last step is to argue that the denominator has a $\hat{k}^*$ instead of $\E{\hat{k}(i)}$. To compare $\hat{k}^*$ and $\E{\hat{k}(i)}$, recall from \cref{sec:model} that $\lambda = \mu_{\Sigma} (1-\epsilon)$, while $\mu_{i}$ and $[\alpha_{i i'}]_{i'\in\cZ}$ are constant with respect to $\epsilon$.
As a result, $\hat{h}(i) = \mu_i - \lambda_i$ is a linear function of $\epsilon$, of the form $c_1 + c_2 \epsilon$ for some constants $c_1$ and $c_2$. Note that $V_{\hat{h}}(i)$ is also a linear function of $\epsilon$ for each $i$, as it is the solution to a system of linear equations with constant coefficients and linear additive terms.
Thus, $\hat{k}(i) = V_{\hat{h}}(i) (\mu_i - \lambda_i)$ is a quadratic function of $\epsilon$, of the form $c_3 + c_4 \epsilon + c_5 \epsilon^2$.
In particular, we find that $\hat{k}^* = c_3 = \lim_{\epsilon\downarrow 0}\E{\hat{k}(i)}$, and thus $|\hat{k}^* - \E{\hat{k}(i)}| = |c_4 \epsilon + c_5 \epsilon^2| = O(\epsilon)$. The replacement of $\E{\hat{k}(i)}$ by $\hat{k}^*$ is thus absorbed by the existing error term.
\Halmos
\end{proof}

\section{Generalization to JSQ System}
\label{sec:jsq}

In \cref{sec:ssq}, we introduced our methodology for computing the Laplace transform of the queue length when the arrival and service rates are Markov-modulated, and hence, they are not independent of the queue length. The key ingredient was to use the Poisson equation to handle cross-terms between the Markov-modulated parameters and the queue length. In this section, we integrate this step with SSC and prove the main result of the paper. In particular, the results and methodology presented in \cref{sec:ssq} intuitively showcase the analysis of the ``collapsed'' system in heavy traffic. In this section, we additionally track and carefully handle all the error terms arising from SSC.

\subsection{State Space Collapse}
\label{sec:ssc}

In the JSQ system, SSC indicates that as the traffic intensity increases, all the queue lengths become approximately equal. Intuitively, this phenomenon occurs because JSQ assigns new arrivals to the shortest queue, thereby reducing the difference between the longest and shortest queues in the system. 

The following proposition formally presents this SSC result by showing that the magnitude of the expected difference between the vector of queue lengths in steady state $\tvq$ and its projection on the vector $\vone$ is bounded by constants that do not depend on $\epsilon$. This demonstrates SSC as described above, because all the queue lengths increase when $\epsilon\downarrow 0$. Hence, a bounded error becomes insignificant in the heavy-traffic limit. 

Given a vector $\vq\in\bZ^n_+$, define
\begin{align}\label{eq:def-qpar-qperp}
    \vq_\parallel \defn \vone \Big(\dfrac{1}{n} \sum_{j=1}^n q_j\Big), \qquad \vq_\perp \defn \vq - \vq_\parallel,
\end{align}
and let $q_{\parallel j}$ and $q_{\perp j}$ denote the $j\tth$ element of $\vq_\parallel$ and $\vq_\perp$, respectively.

\begin{proposition}\label{prop:ssc}
    Consider a JSQ system as described in \cref{sec:model}. If the SSC load condition in \cref{def:condition_lambda_i-mu_i} is satisfied, then there exist finite constants $0<a<1$, $b>0$ and $v>0$ such that for any nonnegative integer $p$, we have
    \begin{align}\label{eq:qperp-bound-tail-prob}
        P\left(\|\tvq_\perp\|>b+v p\right) \leq a^{p+1}.
    \end{align}
    Consequently, for any positive integer $r$, there exists a constant $C_r$ such that
    \begin{align}\label{eq:qperp-bound-poly}
        \E{\|\tvq_\perp\|^r}\leq C_r.
    \end{align} 
    Further, there exists $\Theta>0$ and a finite constant $C_{\exp}>0$ such that
    \begin{align}\label{eq:qperp-bound-expo}
        \E{e^{\theta \|\tvq_\perp\|}}\leq C_{\exp}, \qquad \forall |\theta| < \Theta.
    \end{align}
\end{proposition}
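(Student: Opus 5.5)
The plan is to use the Hajek/Bertsimas–Gamarnik–Tsitsiklis drift-to-tail-bound lemma, in the form used in the heavy-traffic JSQ literature (e.g., \citet{Ery-Sri-2012-drift}). We apply it to the Lyapunov function $W_\perp(\vq,i)\defn\|\vq_\perp\|$ on the joint chain $\{(Z(t),\vq(t))\}$. That lemma needs two facts about $W_\perp$. First, its jumps are bounded: each transition changes one coordinate of $\vq$ by one or changes only $i$, so $|W_\perp(x')-W_\perp(x)|\le 1$. Second, the total transition rate is bounded by $\lambda_{\max}+n\mu_{\max}+\sup_i\alpha_{i\bullet}$, which we assume finite, as in \cref{def:Drift}. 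Given a negative drift $\Delta W_\perp\le-\gamma$ whenever $W_\perp\ge\kappa$, the lemma yields the geometric tail \eqref{eq:qperp-bound-tail-prob} with $b=\kappa$, $v$ equal to the jump bound, and $a$ depending only on $\gamma$ and these bounds. Moment bounds \eqref{eq:qperp-bound-poly} then follow by summing the tails, and \eqref{eq:qperp-bound-expo} follows by summing $e^{\theta(b+v(p+1))}a^{p+1}$, which converges for $\theta<\Theta\defn\log(1/a)/v$.

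The core step is the drift bound. Since $W_\perp$ is not smooth, I would follow the standard trick of bounding $\Delta W_\perp\le\frac{1}{2\|\vq_\perp\|}\big(\Delta\|\vq\|^2-\Delta\|\vq_\parallel\|^2\big)$, using concavity of the square root. Now compute both squared drifts at a state $(\vq,i)$. Let $j^*$ be the shortest queue. Then
\begin{align*}
\Delta\|\vq\|^2 &= 2\lambda_i q_{j^*}-2\sum_j\mu_{ij}q_j+\lambda_i+\sum_j\mu_{ij}\ind{q_j>0}+2\sum_j\mu_{ij}\ind{q_j=0}q_j,\\
\Delta\|\vq_\parallel\|^2 &= \tfrac{2}{n}q_\Sigma(\lambda_i-\mu_{i\Sigma})+O(1)+(\text{unused-service terms}).
\end{align*}
The last term in the first line vanishes. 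Subtracting and writing $q_j=q_{\parallel}+q_{\perp j}$ gives, up to bounded terms (after handling the unused-service terms, which are nonnegative multiples of $q_\Sigma/n$ and bounded by $\mu_{\max}(\|\vq_\perp\|+1)$-type quantities),
\begin{align*}
\Delta\|\vq\|^2-\Delta\|\vq_\parallel\|^2 \le 2\lambda_i q_{\perp j^*}-2\sum_j\mu_{ij}q_{\perp j}+C.
\end{align*}
We then use $\sum_j q_{\perp j}=0$ to write $\sum_j\mu_{ij}q_{\perp j}=\sum_j(\mu_{ij}-\mu_{i\min})q_{\perp j}$. Since $q_{\perp j}\ge q_{\perp j^*}$ and $\mu_{ij}-\mu_{i\min}\ge0$, this sum is at least $(\mu_{i\Sigma}-n\mu_{i\min})q_{\perp j^*}$. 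Hence the main term is at most $2(\lambda_i-\mu_{i\Sigma}+n\mu_{i\min})\,q_{\perp j^*}$. Because $q_{\perp j^*}$ is the minimum of a zero-sum vector, $q_{\perp j^*}\le-\|\vq_\perp\|/\sqrt{n(n-1)}$, and the coefficient is positive by the SSC load condition.

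This is the main obstacle. Since $\cZ$ may be infinite, the condition $\lambda_i>\mu_{i\Sigma}-n\mu_{i\min}$ for all $i$ does not by itself give a uniform margin. I would either strengthen it to an $\inf_i$ margin $\delta>0$, taken as the intended meaning (it holds automatically for finite $\cZ$), or argue that the margin is bounded below uniformly in $\epsilon$ small, since only $\lambda_i$ moves linearly with $\epsilon$. With such a $\delta$, we get $\Delta W_\perp\le-\delta/\sqrt{n(n-1)}+C/\|\vq_\perp\|$, which is at most $-\gamma$ once $\|\vq_\perp\|\ge\kappa$. The modulating transitions do not change $\vq$, so they contribute zero drift to $W_\perp$. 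This is the key point: no mixing-time assumption is needed, because the negative drift holds pointwise in every state $i$.
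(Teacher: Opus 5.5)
Your proposal is correct and is essentially the paper's proof. You use the same Lyapunov function $\|\vq_\perp\|$, the same concavity bound $\Delta W_\perp\le\frac{1}{2\|\vq_\perp\|}\big(\Delta\|\vq\|^2-\Delta\|\vq_\parallel\|^2\big)$ and the same drift-to-tail lemma. Your zero-sum rewriting $\sum_j\mu_{ij}q_{\perp j}=\sum_j(\mu_{ij}-\mu_{i\min})q_{\perp j}$ yields exactly the paper's term $-2\lambda'_{i\min}\sum_j(q_j-q_{\min})$, which the paper reaches via the hypothetical rate vector $\vlambda_i'$. Summing the geometric tail directly is a cleaner route to \eqref{eq:qperp-bound-expo} than the paper's Taylor-series corollary.

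The uniform margin $\inf_{i\in\cZ}(\lambda_i-\mu_{i\Sigma}+n\mu_{i\min})>0$ and the bound $\sup_i\alpha_{i\bullet}<\infty$ that you flag are exactly what the paper uses implicitly when it writes $\gamma=\frac12\min_{i\in\cZ}\lambda'_{i\min}$ and asserts $\bar G<\infty$. Only your first option resolves this; uniformity in $\epsilon$ does not substitute for uniformity in $i$. Finally, the unused-service term $-\frac{2}{n}q_\Sigma\sum_j\mu_{ij}\ind{q_j=0}$ must be dropped by its sign, not bounded in magnitude. An $O(\|\vq_\perp\|)$ magnitude bound would leave an $O(\mu_{\max})$ term after dividing by $2\|\vq_\perp\|$, which could swamp the negative drift.
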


The proof of \cref{prop:ssc} is standard for heavy-traffic analysis, so we present it in Appendix~\ref{app:ssc-proof}. The main idea is to bound the drift of the test function $W_\perp (\vq)\defn \|\vq_\perp\|$. The proof is mostly algebraic, but there is a key step where we estimate the arrival rate to each queue after routing. We do so by defining the hypothetical arrival rate vector in state $i$ as
\begin{align*}
    \vlambda_i' \defn \vmu_i - \delta_i \mu_{i\Sigma}\vone,
\end{align*}
with $\delta_i \defn \left(1 - \lambda_i/\mu_{i\Sigma} \right)/n$. The SSC load condition ensures that all the elements of the hypothetical arrival rate vector $\vlambda_i'$ are positive, that is, it ensures that it is achievable for JSQ dispatching to match the hypothetical arrival rates for each server in each modulation state.

\subsection{Poisson Equation for the JSQ System}

In this section, we generalize \cref{thm:ssq-poisson-lemma} to the JSQ system and present the proof. We use this theorem to handle cross terms between the queue lengths and the Markov-modulated parameters.

\begin{theorem}\label{thm:jsq-poisson-lemma}
    Consider the JSQ system described in \cref{sec:model} and the Poisson equation in \cref{def:poisson-eqn}. Let $f:\cZ\to \bR$ be any function such that a solution $V_f(i)$ to the Poisson equation exists and satisfies $\E{|V_f(i)|^{1+\xi}}<\infty$ for some $\xi>0$. Then, for all $s>0$,
    \begin{align*}
        & Cov(e^{-s\epsilon \tq_\Sigma},f(i)) 
        %= \E{e^{-s \epsilon \tq_\Sigma} f(i)} - \E{e^{-s \epsilon \tq_\Sigma}} \fbar 
        =(e^{-s\epsilon}-1) \E{e^{-s\epsilon \tq_\Sigma} V_f(i)\left(\lambda_i-\mu_{i\Sigma}\right)} + O\left(\epsilon^{2-\frac{1}{1+\xi}}\right).
    \end{align*}
\end{theorem}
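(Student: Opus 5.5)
We will set the drift of a product test function to zero in steady state. The natural choice is
\[
\Phi(\vq,i)\defn e^{-s\epsilon q_\Sigma}\, V_f(i),
\]
which couples the exponential transform of the total queue length with the solution to the Poisson equation for $f$. The generator of the joint chain $(Z(t),\vq(t))$ acts on $\Phi$ in two ways. A modulating transition $i\to i'$ changes only $V_f$. A queueing transition changes only the exponential: an arrival at total rate $\lambda_i$ increases $q_\Sigma$ by one regardless of routing, and a departure from server $j$ at rate $\mu_{ij}\ind{q_j>0}$ decreases $q_\Sigma$ by one. Because the test function depends on $\vq$ only through $q_\Sigma$, the JSQ routing decision never enters. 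This is why the statement takes the same form as \cref{thm:ssq-poisson-lemma} with $\mu_i$ replaced by $\mu_{i\Sigma}$.

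First, I write out the drift:
\begin{align*}
\Delta\Phi(\vq,i) &= e^{-s\epsilon q_\Sigma}\sum_{i'\neq i}\alpha_{ii'}\big(V_f(i')-V_f(i)\big) + V_f(i)\, e^{-s\epsilon q_\Sigma}\Big[\lambda_i(e^{-s\epsilon}-1)+\sum_{j}\mu_{ij}\ind{q_j>0}(e^{s\epsilon}-1)\Big].
\end{align*}
By the Poisson equation in \cref{def:poisson-eqn}, $\sum_{i'\neq i}\alpha_{ii'}(V_f(i')-V_f(i))=\alpha_{i\bullet}(V_f^+(i)-V_f(i))=\fbar-f(i)$. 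The first term is therefore exactly $e^{-s\epsilon q_\Sigma}(\fbar-f(i))$, and its expectation equals $-\mathrm{Cov}(e^{-s\epsilon\tq_\Sigma},f(i))$. This is the mechanism that produces the covariance.

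Second, I handle the queueing term. I write $\ind{q_j>0}=1-\ind{q_j=0}$ and use $e^{s\epsilon}-1=-(e^{-s\epsilon}-1)+O(\epsilon^2)$, so that the bracket becomes $(e^{-s\epsilon}-1)(\lambda_i-\mu_{i\Sigma})+O(\epsilon^2)+(e^{s\epsilon}-1)\sum_j\mu_{ij}\ind{q_j=0}$, where the bounded rates have been used. Setting $\E{\Delta\Phi}=0$ and rearranging gives the claimed main term $(e^{-s\epsilon}-1)\E{e^{-s\epsilon\tq_\Sigma}V_f(i)(\lambda_i-\mu_{i\Sigma})}$, plus two remainders:
\begin{enumerate}
\item an $O(\epsilon^2)\E{|V_f(i)|}$ term, which is $O(\epsilon^2)$ by integrability;
\item the unused-service term $(e^{s\epsilon}-1)\E{e^{-s\epsilon\tq_\Sigma}V_f(i)\sum_j\mu_{ij}\ind{\tq_j=0}}$.
\end{enumerate}

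Third, and this is the main obstacle, I bound the unused-service term. Since $|e^{s\epsilon}-1|=O(\epsilon)$ and $e^{-s\epsilon q_\Sigma}\le 1$, it is at most $O(\epsilon)\,\mu_{\max}\,\E{|V_f(i)|\sum_j\ind{\tq_j=0}}$. Applying Hölder's inequality with exponents $1+\xi$ and $(1+\xi)/\xi$ gives
\[
\E{|V_f(i)|\ind{\tq_j=0}} \le \E{|V_f(i)|^{1+\xi}}^{\frac{1}{1+\xi}}\,\Prob{\tq_j=0}^{\frac{\xi}{1+\xi}}.
\]
It then remains to show $\Prob{\tq_j=0}=O(\epsilon)$ for each $j$, which would yield the error $O(\epsilon\cdot\epsilon^{\xi/(1+\xi)})=O(\epsilon^{2-\frac{1}{1+\xi}})$. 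To get this I would set the drift of $q_\Sigma$ to zero (truncating if needed, with moments finite by positive recurrence). This gives $\E{\sum_j\mu_{ij}\ind{\tq_j=0}}=\mu_\Sigma-\lambda=\mu_\Sigma\epsilon$. If $\mu_{ij}\ge\mu_{\min}>0$ uniformly, each $\Prob{\tq_j=0}\le\mu_\Sigma\epsilon/\mu_{\min}$ follows immediately. If the rates are not uniformly bounded below, I would replace the probability inside Hölder by $\E{\mu_{ij}\ind{\tq_j=0}}$, writing $\mu_{ij}\ind{\cdot}\le \mu_{\max}^{\xi/(1+\xi)}(\mu_{ij}\ind{\cdot})^{1/(1+\xi)}$ before applying Hölder, so that only the weighted empty-probability $O(\epsilon)$ is needed.

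Finally, the use of $\E{\Delta\Phi}=0$ must be justified. The function $\Phi$ is not bounded, since $V_f$ may be unbounded on the countable space $\cZ$. It is, however, integrable because $|\Phi|\le|V_f|$. The transition rates are bounded in $\vq$, but $\alpha_{i\bullet}$ may be unbounded, so \cref{def:Drift} does not apply directly. I would handle this with a standard truncation (or Dynkin/martingale argument) in the modulating state, using the $1+\xi$ moment for uniform integrability.
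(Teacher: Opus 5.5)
Your proposal is correct and follows essentially the paper's proof: set the drift of $e^{-s\epsilon q_\Sigma}V_f(i)$ to zero, use the Poisson equation to turn the modulation part into the covariance, Taylor-expand $e^{s\epsilon}$, and control the unused-service term by H\"older together with $\E{\sum_j\mu_{ij}\ind{\tq_j=0}}=\mu_\Sigma\epsilon$. There is one small slip in your fallback, and the paper avoids it. The split should be $\mu_{ij}\ind{\tq_j=0}\le\mu_{\max}^{1/(1+\xi)}\big(\mu_{ij}\ind{\tq_j=0}\big)^{\xi/(1+\xi)}$; with your exponents, H\"older produces $\E{(\mu_{ij}\ind{\tq_j=0})^{1/\xi}}$, which does not give the claimed rate when $\xi>1$ and the rates are not bounded away from zero. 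The paper sidesteps this by applying H\"older directly to $X=\sum_j\mu_{ij}\ind{\tq_j=0}$ and using $X^{1+1/\xi}\le(n\mu_{\max})^{1/\xi}X$, so no lower bound on the $\mu_{ij}$ is needed.
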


Note that we write the theorem with respect to the Laplace transform of the scaled total queue length $\epsilon \tq_\Sigma$. However, it holds for all $s>0$. In particular, it holds for $s=\hat{s}/n$. Hence, one can immediately extend the result for the average queue length $\tq_\Sigma/n$, which corresponds to any element of $\tvq_\parallel$. We now provide the proof.

\begin{proof}{Proof of \cref{thm:jsq-poisson-lemma}.}

Recall the definition of covariance:
\begin{align*}
    Cov(e^{-s\epsilon\tq_\Sigma},f(i)) = \E{e^{-s \epsilon \tq_\Sigma} f(i)} - \E{e^{-s \epsilon \tq_\Sigma}} \bar{f}.
\end{align*}
Then, we compute bounds for the right-hand side. To do so, we start by rewriting the Poisson equation \eqref{eq:poisson-eq-def} as follows
\begin{align*}
    f(i)-\fbar = \alpha_{i,\bullet} \left(V_f(i) - V^+_f(i) \right).
\end{align*}
Multiply both sides by $e^{-s \epsilon \tq_\Sigma}$ and apply expectation under stationary distribution to obtain
\begin{align}\label{eq:jsq-service-lemma-1-proof-1}
    \E{e^{-s \epsilon \tq_\Sigma} f(i)} - \E{e^{-s \epsilon \tq_\Sigma}} \bar{f} = \E{e^{-s \epsilon \tq_\Sigma} V_f(i)\alpha_{i,\bullet}} - \E{e^{-s\epsilon \tq_\Sigma}V_f^+(i)\alpha_{i,\bullet}}.
\end{align}

Let's now compute the drift of $g_s(\vq,i) = e^{-s\epsilon q_\Sigma} V_f(i)$. We obtain
\begin{align*}
    & \Delta g_s(\vq,i) \\
    &= e^{-s\epsilon q_\Sigma} V_f(i)\Big(\lambda_i (e^{-s\epsilon}-1) + (e^{s\epsilon}-1) \sum_{j=1}^n(\mu_{ij} \ind{q_j>0})\Big)
    + e^{-s\epsilon q_\Sigma} \big( (\sum_{i'\in\cZ, i'\neq i} V_f(i') \alpha_{ii'}) - V_f(i)\alpha_{i,\cdot}\big).
\end{align*}

Observe that $\E{\hat{g}_s(\vq,i)}<\infty$ because $0<e^{-s\epsilon \tq_\Sigma}\leq 1$ and the $1+\xi$ moment of $V_f(i)$ is finite. Then, we can set its drift to zero: $\E{\Delta \hat{g}_s(\tvq,i)}=0$. Taking expectation with respect to the stationary distribution, and reorganizing terms, we obtain
\begin{align}
    & \E{e^{-s\epsilon \tq_\Sigma} \Big( V_f(i)\alpha_{i,\bullet} - \sum_{i'\in\cZ, i'\neq i} V_f(i') \alpha_{ii'} \Big)} \nonumber \\
    &= \E{e^{-s\epsilon \tq_\Sigma} V_f(i)\Big(\lambda_i (e^{-s\epsilon}-1) + (e^{s\epsilon}-1) \sum_{j=1}^n(\mu_{ij} \ind{\tq_j>0})\Big)} \nonumber \\
    &= \E{e^{-s\epsilon \tq_\Sigma} V_f(i)\left(\lambda_i (e^{-s\epsilon}-1) + (e^{s\epsilon}-1) \mu_{i \Sigma}\right)} - (e^{s\epsilon}-1) \E{e^{-s\epsilon \tq_\Sigma} V_f(i)\sum_{j=1}^n(\mu_{ij} \ind{\tq_j=0})}.
    \label{eq:jsq.step0}
\end{align}
Note that the left-hand side of \eqref{eq:jsq.step0} matches the right-hand side of \eqref{eq:jsq-service-lemma-1-proof-1}. Hence, it suffices to bound each of the expectations on the right-hand side of \eqref{eq:jsq.step0}. We start with the last term:
\begin{align*}
    \left|\E{e^{-s\epsilon \tq_\Sigma} V_f(i)\sum_{j=1}^n (\mu_{ij} \ind{q_j=0})} \right|
    &\stackrel{(a)}{\leq}\E{e^{-s\epsilon \tq_\Sigma} |V_f(i)|\sum_{j=1}^n (\mu_{ij} \ind{q_j=0})} \\
    &\stackrel{(b)}{\leq} \E{|V_f(i)| \sum_{j=1}^n (\mu_{ij} \ind{\tq_j=0})} \\
    &\stackrel{(c)}{\leq} \E{(\sum_{j=1}^n \mu_{ij}\ind{\tq_j=0})^{1+\frac{1}{\xi}}}^{1-\frac{1}{1+\xi}} \E{|V_f(i)|^{1+\xi}}^{\frac{1}{1+\xi}} \\
    &\stackrel{(d)}{\leq} \mu_{\max}^{\frac{1}{\xi}} \E{\sum_{j=1}^n \mu_{ij}\ind{\tq_j=0}}^{1-\frac{1}{1+\xi}} \E{|V_f(i)|^{1+\xi}}^{\frac{1}{1+\xi}} \\
    &\stackrel{(e)}{=} \epsilon^{1-\frac{1}{1+\xi}} \left(\mu_{\Sigma}^{1-\frac{1}{1+\xi}}\mu_{\max}^\frac{1}{\xi} \E{|V_f(i)|^{1+\xi}}^{\frac{1}{1+\xi}} \right)\\
    &\stackrel{(f)}{=} O\left(\epsilon^{1-\frac{1}{1+\xi}}\right),
\end{align*}
where $(a)$ holds by the triangle inequality; 
$(b)$ holds because $e^{-s\epsilon \tq_\Sigma}\leq 1$; 
$(c)$ holds by Hölder's inequality with $\xi>0$; 
$(d)$ holds because $\mu_{ij}\ind{\tq_j=0}\leq \mu_{ij}\leq \mu_{\max}$; 
$(e)$ holds because $\E{\sum_{j=1}^n \mu_{ij}\ind{\tq_j=0}}=\mu_\Sigma\epsilon$. This can be proved by setting to zero the drift of $g(\vq,i)=q_\Sigma$, and an alternative proof is provided in \cref{jsq-eq:service-prob-empty-queue}. 
Step $(f)$ holds because the constant multiplying $\epsilon^{1-\frac{1}{1+\xi}}$ is finite by assumption.

Substituting into \eqref{eq:jsq.step0}, and observing that $(e^{s\epsilon}-1)=O(\epsilon)$, we obtain
\begin{align}
    & \E{e^{-s\epsilon \tq_\Sigma} \Big( V_f(i)\alpha_{i,\bullet} - \sum_{i'\in\cZ, i'\neq i} V_f(i') \alpha_{ii'} \Big)} \nonumber \\
    &= \E{e^{-s\epsilon \tq_\Sigma} V_f(i)\left(\lambda_i (e^{-s\epsilon}-1) + (e^{s\epsilon}-1) \mu_{i \Sigma}\right)} + O\left(\epsilon^{2-\frac{1}{1+\xi}}\right). \label{eq:jsq.step1}
\end{align}
Now we solve for the first term of \cref{eq:jsq.step1}. We have:
\begin{align}
    & \E{ e^{-s\epsilon \tq_\Sigma}V_f(i) \left(\lambda_i (e^{-s\epsilon}-1) + \mu_i(e^{s\epsilon}-1)\right)} \nonumber \\
    &= 
    (e^{-s\epsilon}-1) \E{e^{-s\epsilon \tq_\Sigma}V_f(i) \left(\lambda_i -\mu_{i \Sigma} e^{s\epsilon} \right)} \nonumber \\
    &\stackrel{(a)}{=} (e^{-s\epsilon}-1) \E{e^{-s\epsilon \tq_\Sigma} V_f(i)\left(\lambda_i-\mu_{i \Sigma}\right)}  - (e^{-s\epsilon}-1)  \E{e^{-s\epsilon \tq_\Sigma} V_f(i)\left(\mu_{i \Sigma} s \epsilon + O(\epsilon^2) \right)} \nonumber \\
    &\stackrel{(b)}{=} (e^{-s\epsilon}-1) \E{e^{-s\epsilon \tq_\Sigma}V_f(i) \left(\lambda_i-\mu_{i \Sigma}\right)}  +O(\epsilon^2), \label{eq:jsq.step2}
\end{align}
where $(a)$ holds by taking the Taylor expansion of $e^{s\epsilon}$ up to second order; 
and $(b)$ uses that $e^{-s\epsilon \tq_\Sigma}\leq 1$, $\mu_{i\Sigma}\leq n\mu_{\max}$ and $\E{|V_f(i)|}<\infty$ to bound the second term.

Putting \eqref{eq:jsq.step1} and \eqref{eq:jsq.step2} together in \eqref{eq:jsq-service-lemma-1-proof-1}, we obtain
\begin{align*}
        & \E{e^{-s \epsilon \tq_\Sigma} f(i)} - \E{e^{-s \epsilon \tq_\Sigma}} \bar{f} =(e^{-s\epsilon}-1) \E{e^{-s\epsilon \tq_\Sigma} V_f(i)\left(\lambda_i-\mu_{i \Sigma}\right)} + O\left(\epsilon^{2-\frac{1}{1+\xi}}\right). \Halmos
    \end{align*}
\end{proof}

\subsection{MGF Results}

In this section, we prove the main result of this paper, \cref{thm:jsq-vq-convergence}.
Our goal is to show that the individual scaled queue lengths $\epsilon \tq_j$ converge to an exponential random variable, for all $j\in\{1,\ldots,n\}$. We repeat the statement below.

\begin{repeattheorem}[Theorem~\ref{thm:jsq-vq-convergence}.]
    Consider a JSQ system with Markov-modulated arrival and service rates, as described in \cref{sec:model}. Suppose that the system satisfies the SSC load condition. Let $V_{h}(i)$, $V_{k}(i)$ and $V_{\ell}(i)$ be a solution to the Poisson equation for $h(i)$, $k(i)$ and $\ell(i)$ defined in \cref{eq:def-h-k-ell-jsq}, respectively. Suppose $\E{|V_{h}(i)|^{1+\eta}}<\infty$ for some $\eta>0$, $\E{|V_{k}(i)|^{1+\zeta}}<\infty$ for some $\zeta>0$, and $\E{|V_{\ell}(i)|^{1+\beta}}<\infty$ for some $\beta>0$. 
    
    Then, for all $j\in\{1,\ldots,n\}$ and all $s>0$, the Laplace transform of $\epsilon \tq_j$ satisfies
    \begin{align*}
        \E{e^{-s\epsilon \tq_j}} = \frac{1}{1 + \frac{s}{n}\left(1 + \frac{k^*}{\mu_\Sigma}\right)} + O\left(\epsilon^{1-\frac{1}{1+\min\{1,\eta\}}}\right),
    \end{align*}
    where $k^* = \lim_{\epsilon \downarrow 0} \E{k(i)}$. 
    As a consequence, $\epsilon \tq_j$ converges in distribution to an exponential random variable with mean $\frac{1}{n}(1 + k^*/\mu_\Sigma)$.
    Moreover, $\epsilon \tvq\Longrightarrow \Upsilon \vone$, where $\Upsilon$ is an exponential random variable with mean $\frac{1}{n}(1+\frac{k^*}{\mu_\Sigma})$.
\end{repeattheorem}
\begin{proof}{Proof of \cref{thm:jsq-vq-convergence}.}

We develop the proof in several steps. First, recall that for all $j$,
\begin{align*}
    \epsilon \tq_j = \epsilon \frac{\tq_\Sigma}{n} + \epsilon \tq_{\perp j}.
\end{align*}
Since the moments of $\tvq_\perp$ are bounded (\cref{prop:ssc}), then $\epsilon \tvq_\perp$ converges to 0 in the mean-squared sense. Hence, $\epsilon \tvq$ and $\epsilon \tvq_\parallel$ converge in distribution to the same limit. 
Intuitively, this means that as $\epsilon$ decreases to 0,
\begin{align*}
    \epsilon \tq_j \approx \epsilon\tq_{\parallel j} = \epsilon \frac{\tq_\Sigma}{n} \quad \forall j\in\{1,\ldots,n\}.
\end{align*}

Hence, we only need the limiting distribution of $\epsilon \tq_\Sigma/n$, which we obtain in \cref{thm:jsq-heavy-traffic-distribution-q_avg}. Finally, we characterize the error of approximating the MGF of $\epsilon \tq_j$ by the transform of $\epsilon \tq_\Sigma/n$ and compute the rate of convergence to the heavy traffic limit in \cref{thm:jsq-ROC}. 
\Halmos
\end{proof}

\subsubsection{Distribution of the Average Queue Lengths}

Similarly to the single-server queue, we repeatedly use the Poisson equation to compute the MGF of the queue lengths. In particular, we use \cref{thm:jsq-poisson-lemma} with the following functions:
\begin{align*}
    h(i) \defn \mu_{i \Sigma}-\lambda_i,\quad k(i) \defn V_{h}(i)\left(\mu_{i \Sigma} - \lambda_i\right),\quad \ell(i) \defn \mu_{i\Sigma},
\end{align*}
where $V_{h}(i)$ is a solution to the Poisson equation for $h(i)$. We now state the main result of this section, which provides an expression for the MGF of each element of $\widetilde{\vq}_\parallel=\left(\tq_\Sigma/n\right)\vone$.

\begin{theorem}\label{thm:jsq-heavy-traffic-distribution-q_avg}
    Consider a JSQ system with Markov-modulated arrival and service rates, as described in \Cref{sec:model}, and assume that the SSC load condition (\cref{def:condition_lambda_i-mu_i}) is satisfied.
    
    Let $V_{h}(i)$, $V_{k}(i)$, and $V_{\ell}(i)$ be a the solution to the Poisson equation for the functions $h(i)$, $k(i)$ and $\ell(i)$, respectively. Further, assume $\E{|V_{h}(i)|^{1+\eta}}<\infty$ for some $\eta>0$, $\E{|V_{k}(i)|^{1+\zeta}}<\infty$ for some $\zeta>0$, and $\E{|V_{\ell}(i)|^{1+\beta}}<\infty$ for some $\beta>0$. Then
    \begin{align*}
    \E{e^{-s\epsilon \tq_\Sigma/n}} &= \frac{1}{1+\frac{s}{n}\left(1+\frac{k^*}{\mu_\Sigma}\right)} + O\left(\epsilon^{1 - \frac{1}{1+\min\{1,\eta\}}}\right),
\end{align*}
where $k^*\defn \lim_{\epsilon\downarrow 0} \E{k(i)}$. 
\end{theorem}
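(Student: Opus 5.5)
We follow the single-server template of \cref{thm:ssq-heavy-traffic-distribution}, now applied to the test function $\varphi_s(\vq,i)=e^{-s\epsilon q_\Sigma/n}$. Write $\hat s=s/n$, so that $\varphi_s=e^{-\hat s\epsilon q_\Sigma}$ and \cref{thm:jsq-poisson-lemma} applies with $\hat s$ in place of $s$.

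\textbf{Step 1 (drift identity).} Arrivals increase $q_\Sigma$ by one at rate $\lambda_i$, regardless of routing. Departures occur at rate $\sum_j \mu_{ij}\ind{q_j>0}$. Writing $\ind{q_j>0}=1-\ind{q_j=0}$ and setting the (bounded) drift to zero in steady state gives
\begin{align*}
    \E{e^{-\hat s\epsilon \tq_\Sigma}\left(\mu_{i\Sigma}e^{\hat s\epsilon}-\lambda_i\right)} = e^{\hat s\epsilon}\,\E{e^{-\hat s\epsilon\tq_\Sigma}\sum_{j=1}^n \mu_{ij}\ind{\tq_j=0}}.
\end{align*}
Setting $\hat s\to 0$, or taking the drift of $q_\Sigma$, yields $\E{\sum_j\mu_{ij}\ind{\tq_j=0}}=\mu_\Sigma\epsilon$.

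\textbf{Step 2 (left-hand side).} We Taylor-expand $e^{\hat s\epsilon}$ and split the left-hand side into $\E{e^{-\hat s\epsilon\tq_\Sigma}h(i)}+\hat s\epsilon\,\E{e^{-\hat s\epsilon\tq_\Sigma}\ell(i)}+O(\epsilon^2)$. We then apply \cref{thm:jsq-poisson-lemma} three times, exactly as in \eqref{eq:ssq-poisson-h}--\eqref{eq:ssq-poisson-l}:
\begin{itemize}
    \item with $f=h$, where $\bar h=\mu_\Sigma\epsilon$, producing a correction $\hat s\epsilon\,\E{e^{-\hat s\epsilon\tq_\Sigma}k(i)}+O(\epsilon^{2-1/(1+\eta)})$;
    \item with $f=k$, to replace that expectation by $\E{e^{-\hat s\epsilon\tq_\Sigma}}\E{k(i)}+O(\epsilon)$;
    \item with $f=\ell$, to get $\mu_\Sigma\E{e^{-\hat s\epsilon\tq_\Sigma}}+O(\epsilon)$.
\end{itemize}
The result is
\begin{align*}
    \mathrm{LHS}=\mu_\Sigma\epsilon\,\E{e^{-\hat s\epsilon\tq_\Sigma}}\left(1+\hat s\left(1+\E{k(i)}/\mu_\Sigma\right)\right)+O\left(\epsilon^{2-\frac{1}{1+\eta}}\right).
\end{align*}

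\textbf{Step 3 (right-hand side; main obstacle).} In the single-server case $e^{-s\epsilon q}\ind{q=0}=\ind{q=0}$. Here $\tq_j=0$ does not force $\tq_\Sigma=0$, so this identity fails and SSC must be used. We bound
\begin{align*}
    \left|\E{\left(1-e^{-\hat s\epsilon\tq_\Sigma}\right)\sum_j\mu_{ij}\ind{\tq_j=0}}\right|.
\end{align*}
On $\{\tq_j=0\}$ we have $\tq_\Sigma\le n\|\tvq_\perp\|$ up to a constant, since $q_{\perp j}=-q_\Sigma/n$ there. Hence $1-e^{-\hat s\epsilon\tq_\Sigma}\le \hat s\epsilon\, C\|\tvq_\perp\|$. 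Cauchy--Schwarz then gives
\begin{align*}
    \epsilon\,\E{\|\tvq_\perp\|^2}^{1/2}\,\E{\textstyle\sum_j\mu_{ij}\ind{\tq_j=0}}^{1/2}\le \epsilon\sqrt{C_2}\,\sqrt{n\mu_{\max}\mu_\Sigma\epsilon},
\end{align*}
using \cref{prop:ssc} and, since $\sum_j\mu_{ij}\ind{\tq_j=0}\le n\mu_{\max}$, the bound $\E{(\sum_j\mu_{ij}\ind{\tq_j=0})^2}\le n\mu_{\max}\mu_\Sigma\epsilon$. This is $O(\epsilon^{3/2})$. Therefore
\begin{align*}
    \mathrm{RHS}=e^{\hat s\epsilon}\mu_\Sigma\epsilon+O(\epsilon^{3/2})=\mu_\Sigma\epsilon+O(\epsilon^{3/2}).
\end{align*}

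\textbf{Step 4 (conclusion).} Equating the two sides and dividing by $\mu_\Sigma\epsilon(1+\hat s(1+\E{k(i)}/\mu_\Sigma))$, which is bounded away from zero, gives the transform with error $O(\epsilon^{1/2})+O(\epsilon^{1-1/(1+\eta)})$. Since $1-\frac{1}{1+\min\{1,\eta\}}=\min\{\tfrac12,\,1-\tfrac{1}{1+\eta}\}$, this is $O(\epsilon^{1-\frac{1}{1+\min\{1,\eta\}}})$.

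Finally, we replace $\E{k(i)}$ by $k^*$. As in the single-server proof, $h$ and hence $V_h$ are affine in $\epsilon$, so $k$ is quadratic in $\epsilon$ and $|\E{k(i)}-k^*|=O(\epsilon)$, which is absorbed into the existing error term.
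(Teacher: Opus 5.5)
Your proposal is correct. Steps 1, 2 and 4 match the paper's proof: the drift of $e^{-\hat s\epsilon q_\Sigma}$, three applications of \cref{thm:jsq-poisson-lemma} with $h$, $k$, $\ell$, then division and replacement of $\E{k(i)}$ by $k^*$.

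The one genuine difference is Step 3, the unused-service term.
\begin{itemize}
\item The paper rewrites $e^{-s\epsilon \tq_\Sigma}\ind{\tq_j=0}=\ind{\tq_j=0}e^{s\epsilon n\tq_{\perp j}}$. It then bounds $\E{\sum_j\mu_{ij}\ind{\tq_j=0}(e^{s\epsilon n\tq_{\perp j}}-1)}$ by two Cauchy--Schwarz steps and $|e^x-1|\le |x|e^{|x|}$. This needs the fourth-moment bound $C_4$, the exponential-moment bound \eqref{eq:qperp-bound-expo} of \cref{prop:ssc} (hence \cref{cor:ssc-expo}), and the side condition $4s\epsilon n<\Theta$.
\item You observe that on $\{\tq_j=0\}$ one has $\tq_{\perp j}=-\tq_\Sigma/n\le 0$. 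Hence $1-e^{-\hat s\epsilon\tq_\Sigma}\le \hat s\epsilon\tq_\Sigma\le s\epsilon\|\tvq_\perp\|$. A single Cauchy--Schwarz with $C_2$ and $\E{(\sum_j\mu_{ij}\ind{\tq_j=0})^2}\le n\mu_{\max}\mu_\Sigma\epsilon$ then gives the same $O(\epsilon^{3/2})$.
\end{itemize}
Your route is shorter and uses only polynomial moments of $\|\tvq_\perp\|$. It also holds for every fixed $s>0$ with no relation imposed between $s$, $\epsilon$ and $\Theta$.

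The paper's bound does not use the sign of $\tq_{\perp j}$. It would therefore carry over unchanged to a test function $e^{\theta\epsilon q_\Sigma}$ with $\theta>0$. There the factor on $\{\tq_j=0\}$ is at least one and unbounded, so exponential moments of $\|\tvq_\perp\|$ are genuinely needed.

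One small slip: in your displayed Cauchy--Schwarz bound, the last factor should be the square root of the \emph{second} moment of $\sum_j\mu_{ij}\ind{\tq_j=0}$, and the prefactor is $s\epsilon$ rather than $\epsilon$. Your surrounding text handles both correctly, so the conclusion is unaffected.
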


%The conditions and the result in \cref{thm:jsq-heavy-traffic-distribution-q_avg} are similar to \cref{thm:jsq-vq-convergence}. The key difference is that \cref{thm:jsq-heavy-traffic-distribution-q_avg} provides the asymptotic MGF of the scaled \textit{average} queue length, $\epsilon\tq_\Sigma/n$, whereas \cref{thm:jsq-vq-convergence} deals with the scaled \textit{individual} queue lengths, $\epsilon \tq_j$ for each $j\in\{1,\ldots,n\}$. Hence, the latter result is more powerful. We use \cref{thm:jsq-heavy-traffic-distribution-q_avg} as a step in the proof of \cref{thm:jsq-vq-convergence}, and the results are connected by SSC.

The foundations of the proof of \cref{thm:jsq-heavy-traffic-distribution-q_avg} are similar to the single-server queue (see \cref{thm:ssq-heavy-traffic-distribution}). Indeed, we can pretend that $q_\Sigma/n$ is a single-server queue, and roughly, the same steps of the proof will follow. This is a consequence of SSC, which intuitively says that the JSQ system behaves as a single-server queue for small $\epsilon$. However, this is only an approximation. We need to carefully track the error terms that arise because for positive $\epsilon$, SSC is an approximation, and it is not an exact result. We now provide the proof.

\begin{proof}{Proof of \cref{thm:jsq-heavy-traffic-distribution-q_avg}.}
For ease of exposition, we prove the result for the total queue length. Note that all the steps in the proof are valid for any $s>0$. Hence, the result for the average queue length $q_\Sigma/n$ can be obtained by replacing $s$ by $s/n$. 
We start by computing the drift of $\varphi_s(\vq,i) = e^{-s\epsilon q_\Sigma}$:
\begin{align*}
    \Delta \varphi_s (\vq,i) &= \lambda_i \left(e^{-s\epsilon (q_\Sigma+1)}-e^{-s\epsilon q_\Sigma} \right) + \sum_{j=1}^n \ind{q_j>0} \mu_{ij} \left(e^{-s\epsilon (q_\Sigma-1)}-e^{-s\epsilon q_\Sigma} \right) \\
    &= (e^{-s\epsilon} -1) \left( e^{-s\epsilon q_\Sigma}(\lambda_i - \mu_{i\Sigma} e^{s\epsilon}) + e^{s\epsilon} e^{-s\epsilon q_\Sigma} \sum_{j=1}^n \mu_{ij} \ind{q_j=0} \right),
\end{align*}
where we rearranged terms using that $\ind{q_j>0}=1-\ind{q_j=0}$. 

Notice that $0<\E{\varphi_s(\vq,i)}\leq 1<\infty$. Then, we can set its drift to zero. 
Taking expectation under steady state and rearranging terms, we obtain:
\begin{align}\label{jsq-eq:service-mgf-to-solve}
    \E{e^{-s\epsilon \tq_\Sigma}(\mu_{i \Sigma} e^{s\epsilon} - \lambda_i)} =  e^{s\epsilon} \E{e^{-s\epsilon \tq_\Sigma} \sum_{j=1}^n \mu_{ij} \ind{\tq_j=0}}.
\end{align}

Setting $s=0$ in \eqref{jsq-eq:service-mgf-to-solve}, we obtain:
\begin{align}\label{jsq-eq:service-prob-empty-queue}
    \mu_\Sigma\epsilon = \E{\sum_{j=1}^n \mu_{ij}\ind{\tq_j=0}}.
\end{align}
In the single-server case, setting $s=0$ was sufficient to compute the right-hand side of the drift equation \eqref{eq:ssq-service-mgf-to-solve}. To do so, we used that $e^{s\epsilon \tq}\ind{\tq=0}=\ind{\tq=0}$. In this case, the indicator is for the $j\tth$ queue being empty while the exponential function is for the total queue length. Notice that if all the queue lengths are equal, then $\tq_j=0$ implies $\tq_\Sigma=0$. However, the queue lengths are not exactly equal to each other. Hence, we need to use SSC to solve for the right-hand side of \eqref{jsq-eq:service-mgf-to-solve}. We delay this step until the end of the proof.

Solving for the left-hand side of \eqref{jsq-eq:service-mgf-to-solve} requires using the Poisson equation, as the expectation involves the product of terms involving the queue lengths and the Markov-modulated parameters. This process is similar to the computation for the single-server queue.

Define
\begin{align*}
\cT \defn e^{-s\epsilon \tq_\Sigma}\left(\mu_{i\Sigma} e^{s\epsilon}-\lambda_i\right).
\end{align*}

Applying Taylor expansion on $e^{s\epsilon}$ we obtain
\begin{align*}
    \cT &= e^{-s\epsilon \tq_\Sigma} \left( \mu_{i\Sigma} e^{s\epsilon} -\lambda_i \right) 
    = e^{-s\epsilon \tq_\Sigma} \left(\mu_{i\Sigma}-\lambda_i\right) + s\epsilon e^{-s\epsilon \tq_\Sigma} \mu_{i \Sigma} + O(\epsilon^2),
\end{align*}
where the $O(\epsilon^2)$ arises because $0<e^{-s\epsilon \tq_\Sigma}\leq 1$ and $\mu_{i\Sigma}$ is bounded by assumption. Taking expectation with respect to the stationary distribution, we obtain
\begin{align}\label{jsq-eq:exp-T1}
    \E{\cT} = \E{e^{-s\epsilon\tq_\Sigma}(\mu_{i\Sigma}-\lambda_i)} + s\epsilon \E{e^{-s\epsilon \tq_\Sigma}\mu_{i\Sigma}} + O(\epsilon^2).
\end{align}
We now bound each of the expectations in \eqref{jsq-eq:exp-T1}. 
For the first term, we use \cref{thm:jsq-poisson-lemma} with $f(i)= h(i) =\mu_{i\Sigma}-\lambda_i$ and obtain:
\begin{align}
     \E{e^{-s \epsilon \tq_\Sigma} (\mu_{i \Sigma}-\lambda_i)}
    &=\E{e^{-s\epsilon \tq_\Sigma}} (\mu_\Sigma-\lambda) + (e^{-s\epsilon}-1) \E{e^{-s\epsilon \tq_\Sigma} V_{h}(i)(\lambda_i-\mu_{i\Sigma})} + O\left(\epsilon^{2-\frac{1}{1+\eta}}\right) \nonumber \\
    &\stackrel{(*)}{=} \epsilon \mu_\Sigma \E{e^{-s\epsilon \tq_\Sigma}} + s\epsilon \E{e^{-s\epsilon \tq_\Sigma} V_{h}(i)(\mu_{i\Sigma}-\lambda_i)} + O\left(\epsilon^{2-\frac{1}{1+\eta}}\right), \label{jsq-eq:exp-T1-Poisson-h}
\end{align}
where 
$(*)$ holds because $\epsilon\defn 1-\lambda/\mu_\Sigma$, expanding $e^{-s\epsilon}$ up to second order, and noticing that $e^{-s\epsilon \tq_\Sigma}$ and $\lambda_i-\mu_{i\Sigma}$ are bounded, and $V_h(i)$ has bounded expectation. Then, higher order terms are absorbed in $O\left(\epsilon^{2-\frac{1}{1+\eta}}\right)$. 

For the second term in \eqref{jsq-eq:exp-T1-Poisson-h}, we use \cref{thm:jsq-poisson-lemma} with $f(i)=k(i)=V_{h}(i)(\mu_{i\Sigma}-\lambda_i)$ and obtain
\begin{align}
    & \E{e^{-s\epsilon \tq_\Sigma} V_{h}(i)(\mu_{i\Sigma}-\lambda_i)} = \E{e^{-s\epsilon \tq_\Sigma}k(i)} \nonumber \\
    &= \E{e^{-s\epsilon \tq_\Sigma}}\E{k(i)} + (e^{-s\epsilon}-1)\E{e^{-s\epsilon\tq_\Sigma} V_{k}(i)(\lambda_i-\mu_{i\Sigma}) } + O\left(\epsilon^{2-\frac{1}{1+\zeta}}\right) \nonumber \\
    &= \E{e^{-s\epsilon \tq_\Sigma}}\E{k(i)} + O(\epsilon), \label{jsq-eq:exp-T1-Poisson-k} 
\end{align}
where \eqref{jsq-eq:exp-T1-Poisson-k} follows by and argument similar to \eqref{jsq-eq:exp-T1-Poisson-h}. Putting \eqref{jsq-eq:exp-T1-Poisson-h} and \eqref{jsq-eq:exp-T1-Poisson-k} together, yields
\begin{align} \label{jsq-eq:exp-T1-first-term}
     \E{e^{-s \epsilon \tq_\Sigma} (\mu_{i \Sigma}-\lambda_i)} = \epsilon \mu_\Sigma \E{e^{-s\epsilon \tq_\Sigma}} \left(1 + s\frac{\E{k(i)}}{\mu_\Sigma} \right) + O\left(\epsilon^{2-\frac{1}{1+\eta}}\right).
\end{align}

To compute the second term of \eqref{jsq-eq:exp-T1} we use \cref{thm:jsq-poisson-lemma} with $f(i)=\ell(i)=\mu_{i\Sigma}$ and obtain
\begin{align}
    \E{e^{-s\epsilon \tq_\Sigma}\mu_{i\Sigma}}&= \mu_\Sigma \E{e^{-s\epsilon \tq_\Sigma}} + (e^{-s\epsilon}-1)\E{e^{-s\epsilon \tq_\Sigma}V_{\ell}(i)(\lambda_i-\mu_{i\Sigma})} + O\left(\epsilon^{2-\frac{1}{1+\beta}}\right) \nonumber \\
    &= \mu_\Sigma\E{e^{-s\epsilon \tq_\Sigma}} + O(\epsilon), \label{jsq-eq:exp-T1-Poisson-l} 
\end{align}
where \eqref{jsq-eq:exp-T1-Poisson-l} again uses a similar argument to \eqref{jsq-eq:exp-T1-Poisson-h} and \eqref{jsq-eq:exp-T1-Poisson-k}. Therefore, using \eqref{jsq-eq:exp-T1-first-term} and \eqref{jsq-eq:exp-T1-Poisson-l} in \eqref{jsq-eq:exp-T1}, we obtain
\begin{align*}
    \E{e^{-s\epsilon \tq_\Sigma}(e^{s\epsilon}\mu_{i\Sigma} - \lambda_i) } = \epsilon \mu_\Sigma \E{e^{-s\epsilon \tq_\Sigma}} \Big(1 + s\Bigg(1 + \frac{\E{k(i)}}{\mu_\Sigma} \Bigg)\Big) + O\left(\epsilon^{2-\frac{1}{1+\eta}}\right).
\end{align*}

Plugging back to \eqref{jsq-eq:service-mgf-to-solve} yields
\begin{align}\label{eq:mgf-before-ssc}
    \mu_\Sigma\epsilon \E{e^{-s\epsilon \tq_\Sigma}}\Big(1 + s\Bigg(1 + \frac{\E{k(i)}}{\mu_\Sigma} \Bigg)\Big)+ O(\epsilon^{2-\frac{1}{1+\eta}}) = e^{s\epsilon} \E{e^{-s\epsilon \tq_\Sigma} \sum_{j=1}^n \mu_{ij} \ind{q_j=0}}.
\end{align}

The final step is to compute the right-hand side of \eqref{eq:mgf-before-ssc}. When we computed the right-hand side term for the single-server queue, we used the property $e^{-s\epsilon\tq}\ind{\tq=0}=\ind{\tq=0}$ (see the argument between \cref{{eq:ssq-service-mgf-to-solve}} and \cref{eq:ssq-service-prob-empty-queue}). In this case, we cannot use the same property directly because $\tq_j=0$ for some $j$ does not imply $\tq_\Sigma=0$. However, if all the queue lengths were equal, this property would hold. Hence, we use SSC to compute the right-hand side of \eqref{eq:mgf-before-ssc}. 
Specifically, we have
\begin{align*}
    e^{-s\epsilon \tq_\Sigma} \sum_{j=1}^n \mu_{ij} \ind{\tq_j=0} 
    &\stackrel{(a)}{=} e^{-s\epsilon \tq_\Sigma} \sum_{j=1}^n \mu_{ij} \ind{\tq_j=0} e^{s\epsilon n \tq_j} \\
    &\stackrel{(b)}{=} \sum_{j=1}^n \mu_{ij} \ind{\tq_j=0}  e^{s\epsilon n (\tq_j - \tq_\Sigma/n)}\\
    &\stackrel{(c)}{=} \sum_{j=1}^n \mu_{ij} \ind{\tq_j=0} e^{s\epsilon n \tq_{\perp j}} \\
    &\stackrel{(d)}{=} \sum_{j=1}^n \mu_{ij} \ind{\tq_j=0} (e^{s\epsilon n \tq_{\perp j}} - 1) + \sum_{j=1}^n \mu_{ij}\ind{\tq_j=0},
\end{align*}
where $(a)$ holds because $\ind{\tq_j=0}=e^{a \tq_j}\ind{\tq_j=0}$ for any constant $a\in\bR$;
$(b)$ by putting together the exponential functions and reorganizing terms; 
$(c)$ by definition of $\tvq_\perp$ in \eqref{eq:def-qpar-qperp}; 
and $(d)$ by adding and subtracting $\sum_{j=1}^n \mu_{ij}\ind{\tq_j=0}$, which is the term that resembles the single-server queue proof.

Taking the expectation, we obtain
\begin{align}
    \E{e^{-s\epsilon \tq_\Sigma} \sum_{j=1}^n \mu_{ij} \ind{\tq_j=0} } 
    &= \E{\sum_{j=1}^n \mu_{ij} \ind{\tq_j=0} (e^{s\epsilon n \tq_{\perp j}}-1)} + \E{\sum_{j=1}^n \mu_{ij} \ind{\tq_j=0}} \nonumber \\
    &\stackrel{(*)}{=} \E{\sum_{j=1}^n \mu_{ij} \ind{\tq_j=0} (e^{s\epsilon n \tq_{\perp j}}-1)} + \mu_\Sigma \epsilon,
    \label{eq:mgf-ssc-step1}
\end{align}
where $(*)$ uses \eqref{jsq-eq:service-prob-empty-queue}. We bound the first term in \eqref{eq:mgf-ssc-step1} using Cauchy-Schwarz inequality and the exponential bound of \cref{prop:ssc}. The detailed computation is presented in Appendix \ref{sec:details-JSQ-thm-proof}. We obtain
\begin{align*}
    \E{e^{-s\epsilon \tq_\Sigma}\sum_{j=1}^n \mu_{ij}\ind{q_j=0}} = \mu_\Sigma \epsilon + O(\epsilon^{\frac{3}{2}}).
\end{align*}

Finally, using this bound in \eqref{eq:mgf-before-ssc}, we obtain
\begin{align*}
    \mu_\Sigma\epsilon \E{e^{-s\epsilon \tq_\Sigma}}\Big(1 + s\Big(1 + \frac{\E{k(i)}}{\mu_\Sigma} \Big)\Big)+ O(\epsilon^{2-\frac{1}{1+\eta}}) = e^{s\epsilon} \mu_\Sigma\epsilon + O(\epsilon^{\frac{3}{2}}).
\end{align*}

Therefore, reorganizing terms and canceling $\epsilon \mu_\Sigma$, we obtain
\begin{align*}
    \E{e^{-s\epsilon \tq_\Sigma}} &= \dfrac{e^{s\epsilon}}{1 + s\Big(1 + \frac{\E{k(i)}}{\mu_\Sigma} \Big)} + O(\sqrt{\epsilon}) + O(\epsilon^{1 - \frac{1}{1+\eta}}) \\
    &\stackrel{(*)}{=} \dfrac{1}{1 + s\Big(1 + \frac{\E{k(i)}}{\mu_\Sigma} \Big)}+ O(\epsilon^{1 - \frac{1}{1+\min\{1,\eta\}}}),
\end{align*}
where $(*)$ holds because $e^{s\epsilon}=1 + O(\epsilon)$ if we expand in Taylor series. 

The error term $O(\epsilon^{1 - \frac{1}{1+\min\{1,\eta\}}})$ comes from the smallest between $O(\sqrt{\epsilon})$ and $O(\epsilon^{1 - \frac{1}{1+\eta}})$, which represent the error terms arising from SSC and the use of the Poisson equation. These are the two approaches we use to deal with cross-terms, as discussed in \cref{sec:proof-overview}.

The last step is to argue that $\left|k^* - \E{k(i)}\right| = O(\epsilon)$, so the denominator has a $k^*$ instead of $\E{k(i)}$. The argument is equivalent to the single-server queue, so we omit it. 
\Halmos
\end{proof}

\subsubsection{Rate of Convergence}
\label{sec:roc}

The last step in our proof is to compute the error of approximation between the scaled individual queue lengths $\epsilon\tq_j$ for each $j$, and their projection on the SSC region $\epsilon \tq_{\parallel j}=\epsilon (\tq_\Sigma/n)$. We formalize this result in the next theorem and present the proof of the rate of convergence for small $\epsilon$.

\begin{theorem}\label{thm:jsq-ROC}
    Consider the JSQ system as in \cref{thm:jsq-heavy-traffic-distribution-q_avg}. Then, the Laplace transform of each individual scaled queue length $\epsilon\tq_j$ is
    \begin{align*}
        \E{e^{-s\epsilon \tq_j}} = \E{e^{-s\epsilon \tq_\Sigma/n}} + O\left(\epsilon \ln \frac{1}{\epsilon}\right), \qquad \forall j\in\{1,\ldots,n\}, \forall s>0.
    \end{align*}
\end{theorem}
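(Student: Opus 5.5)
We compare the two Laplace transforms directly through the decomposition $\tq_j = \tq_\Sigma/n + \tq_{\perp j}$ and then use the SSC bounds of \cref{prop:ssc}. Writing
\begin{align*}
    \left|\E{e^{-s\epsilon \tq_j}} - \E{e^{-s\epsilon \tq_\Sigma/n}}\right|
    = \left|\E{e^{-s\epsilon \tq_\Sigma/n}\left(e^{-s\epsilon \tq_{\perp j}}-1\right)}\right|
    \leq \E{\left|e^{-s\epsilon \tq_{\perp j}}-1\right|},
\end{align*}
we have used $0<e^{-s\epsilon\tq_\Sigma/n}\leq 1$. So the whole task is to show $\E{|e^{-s\epsilon\tq_{\perp j}}-1|}=O(\epsilon\ln\frac1\epsilon)$. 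Here $\tq_{\perp j}$ can be negative, and the exponential is then unbounded. This is why a naive bound $|e^{x}-1|\le |x|e^{|x|}$ combined with the exponential moment \eqref{eq:qperp-bound-expo} would give only $O(\epsilon)$ for $s\epsilon<\Theta$. That already beats the claimed rate, which suggests the authors use a truncation argument instead. I would present the truncation argument, since it needs only the tail bound \eqref{eq:qperp-bound-tail-prob}.

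First, fix a truncation level $L_\epsilon = c\ln\frac1\epsilon$ with $c$ to be chosen. Split on the event $\{\|\tvq_\perp\|\le L_\epsilon\}$. On this event $|s\epsilon\tq_{\perp j}|\le s\epsilon L_\epsilon\to 0$, so the mean value theorem gives $|e^{-s\epsilon\tq_{\perp j}}-1|\le s\epsilon L_\epsilon e^{s\epsilon L_\epsilon} = O(\epsilon\ln\frac1\epsilon)$. This is the source of the logarithm. Alternatively, one could use $s\epsilon\E{|\tq_{\perp j}|}\le s\epsilon C_1$ via \eqref{eq:qperp-bound-poly}, giving $O(\epsilon)$ on this part.

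Second, on the complement $\{\|\tvq_\perp\|> L_\epsilon\}$, use $|e^{-s\epsilon\tq_{\perp j}}-1|\le 1+e^{s\epsilon\|\tvq_\perp\|}$. Apply Cauchy--Schwarz to get a bound of the form
\begin{align*}
    \left(1+\E{e^{2s\epsilon\|\tvq_\perp\|}}^{1/2}\right)P\left(\|\tvq_\perp\|>L_\epsilon\right)^{1/2}.
\end{align*}
For $\epsilon$ small enough that $2s\epsilon<\Theta$, the first factor is bounded by a constant from \eqref{eq:qperp-bound-expo}. By \eqref{eq:qperp-bound-tail-prob} with $p=\lfloor (L_\epsilon-b)/v\rfloor$, the second factor is at most $a^{(L_\epsilon-b)/(2v)}=O(\epsilon^{c\ln(1/a)/(2v)})$. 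Choosing $c\ge 2v/\ln(1/a)$ makes this part $O(\epsilon)$.

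Combining the two parts yields $O(\epsilon\ln\frac1\epsilon)$ uniformly in $j$ for each fixed $s>0$. The main care point is the second step: the exponential may blow up when $\tq_{\perp j}<0$, so the region $2s\epsilon<\Theta$ and the constants $a,b,v$ must be tracked. Since these are independent of $\epsilon$ by \cref{prop:ssc}, I expect no genuine obstacle. Combining the result with \cref{thm:jsq-heavy-traffic-distribution-q_avg} then gives \cref{thm:jsq-vq-convergence}, because $\epsilon\ln\frac1\epsilon$ is dominated by $\epsilon^{1-\frac{1}{1+\min\{1,\eta\}}}$.
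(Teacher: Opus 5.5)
Your proof is correct and is essentially the paper's argument. Like the paper, you truncate at a level of order $\ln\frac{1}{\epsilon}$, linearize the exponential on the bulk (the source of the logarithm), and control the complement with the tail bound \eqref{eq:qperp-bound-tail-prob}; your absolute-value treatment is, if anything, more complete, since the paper conditions only on the one-sided event $\{\tq_{\perp j}>c\lceil\log_a\epsilon\rceil\}$ and so explicitly bounds only one direction.

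The one avoidable detour is discarding the factor $e^{-s\epsilon\tq_\Sigma/n}$. Because $\tq_j,\tq_\Sigma/n\ge 0$ and $x\mapsto e^{-s\epsilon x}$ is $s\epsilon$-Lipschitz with values in $(0,1]$ on $[0,\infty)$, one has $|e^{-s\epsilon\tq_j}-e^{-s\epsilon\tq_\Sigma/n}|\le\min\{1,s\epsilon|\tq_{\perp j}|\}$. So the tail event needs neither Cauchy--Schwarz nor \eqref{eq:qperp-bound-expo}, and \eqref{eq:qperp-bound-poly} with $r=1$ alone already gives the stronger $O(\epsilon)$ rate you suspected.
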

Note that this result also shows that the vector of queue lengths $\vq$ converges to $\vq_{\parallel}$ at the same rate.

\begin{proof}{Proof of \cref{thm:jsq-ROC}.}
First, recall that $\tvq = (\frac{\tq_\Sigma}{n}) \vone + \tvq_\perp$,
or equivalently, that $\tq_j = \frac{\tq_\Sigma}{n} + \tvq_{\perp j}$ for each $j\in\{1,\ldots,n\}$. As a result, 
\begin{align*}
    e^{-s\epsilon \tq_j} - e^{-s\epsilon \tq_\Sigma/n} &= e^{-s\epsilon (\tq_\Sigma/n + \tq_{\perp j})} - e^{-s\epsilon q_\Sigma/n} 
    = e^{-s\epsilon \tq_\Sigma/n} (1-e^{-s\epsilon \tq_{\perp j}}).
\end{align*}

Our goal is now to prove that the expected value of this quantity is bounded by $O(\epsilon \ln \frac{1}{\epsilon})$. 
To do so, we make use of our tail bound \eqref{eq:qperp-bound-tail-prob} in \cref{prop:ssc}. Specifically,
\begin{align*}
    P(\|\tvq_\perp\| > b+vp) \leq a^{p+1},
\end{align*}
with $b,v>0$ and $0<a<1$. In particular, from the proof of \cref{prop:ssc} in Appendix~\ref{app:ssc-proof}, we see that $b$ and $v$ are uniformly bounded constants.

Define $c= b+v$ and note that $c>0$. Additionally, the constants $a$ and $c$ are independent of $\epsilon$ Then, 
for all integers $p > 0$,
\begin{align*}
    P(\|\tvq_\perp\| > c p) \le a^p.
\end{align*}

Set $p = \lceil\log_{a} \epsilon \rceil$ and note that $p > 0$ because $0 < a < 1$ and $0 < \epsilon < 1$.
Thus, we find that
\begin{align} \label{eq:jsq-ssc-tail-prob}
    P(\|\tvq_\perp\| > c \lceil\log_{a} \epsilon \rceil) \le \epsilon, \quad
    \& \quad 
    P(\tq_{\perp j} > c \lceil \log_{a} \epsilon \rceil) \le \epsilon.
\end{align}

Next, we condition on the event $[q_{\perp j} > c \lceil \log_{a} \epsilon\rceil ]$:
\begin{align*}
     \E{e^{-s\epsilon \tq_\Sigma/n} (1-e^{-s\epsilon \tq_{\perp j}})}
    &\le \E{ \left. e^{-s\epsilon \tq_\Sigma/n} (1-e^{-s\epsilon \tq_{\perp j}}) \right| \tq_{\perp j} > c \lceil \log_{a} \epsilon \rceil}
    P(\tq_{\perp j} > c \lceil\log_{a} \epsilon \rceil) \\
    &\quad + \E{ \left. e^{-s\epsilon \tq_\Sigma/n} (1-e^{-s\epsilon \tq_{\perp j}}) \right| \tq_{\perp j} \le c \lceil\log_{a} \epsilon \rceil}
    P(\tq_{\perp j} \le c \lceil\log_{a} \epsilon \rceil) \\
    &\stackrel{(a)}{\leq} 1 \cdot \epsilon + \E{e^{-s\epsilon q_\Sigma/n} (1-e^{-s\epsilon c \lceil\log_{a} \epsilon \rceil})} \\
    &\stackrel{(b)}{\leq} \epsilon + (1-e^{-s\epsilon c \lceil\log_{a} \epsilon \rceil})
    \le \epsilon + s\epsilon c \lceil\log_{a} \epsilon \rceil
    = O(\epsilon \ln \frac{1}{\epsilon}),
\end{align*}
where 
$(a)$ holds by upper bounding the first expectation by one under the condition, the first probability by $\epsilon$ according to \eqref{eq:jsq-ssc-tail-prob}. To upper bound the second expectation, we use the condition on $\tq_{\perp j}$ and that probabilities are always smaller than one.
Next, $(b)$ holds because $e^{-s\epsilon \tq_\Sigma/n}\leq 1$, and the remaining inequalities hold by rearranging terms. 
\Halmos
\end{proof}

\section{Empirical Validation}
\label{sec:empirical}

We have characterized the asymptotic mean queue length behavior of the Markov-modulated Join-the-Shortest-Queue (JSQ) system.
We now empirically validate and illustrate our theoretical results via simulation, and compare the measured response time against our predictions.

We focus on a setting with 3 servers and a 3-state Markov chain, with a single free parameter $r$:
\begin{align}
    \label{eq:main-setting}
    \vlambda = [3r, 6r, 9r],
    [\mu_{ij}] = \begin{pmatrix}
        0.5 & 0.5 & 1 \\
        1 & 2.5 & 2 \\
        5 & 3 & 2.5
    \end{pmatrix},
    [\alpha_{i i'}] = \begin{pmatrix}
        0 & 0.1 & 0 \\
        0 & 0 & 0.1 \\
        0.1 & 0 & 0
    \end{pmatrix}.
\end{align}

For this chain, the load $\rho$ is equal to the adjustable parameter $r$.
As for the SSC load condition specified in \cref{def:condition_lambda_i-mu_i}, note that $\mu_{i \Sigma} - n\mu_{i \min} = [0.5, 2.5, 3]$, so the condition holds for all $r \ge \frac{5}{12}$, and consequently our heavy traffic result \cref{thm:jsq-vq-convergence} applies to this setting.
In this setting, $k^* = \frac{35}{6}$.

\subsection{Convergence of mean and distribution in heavy traffic}

\begin{figure}
\FIGURE{
{\subfloat[Scaled mean queue length $(1-\rho)\E{q_j}$ converging to heavy-traffic mean.]{\includegraphics[width=0.49\textwidth]{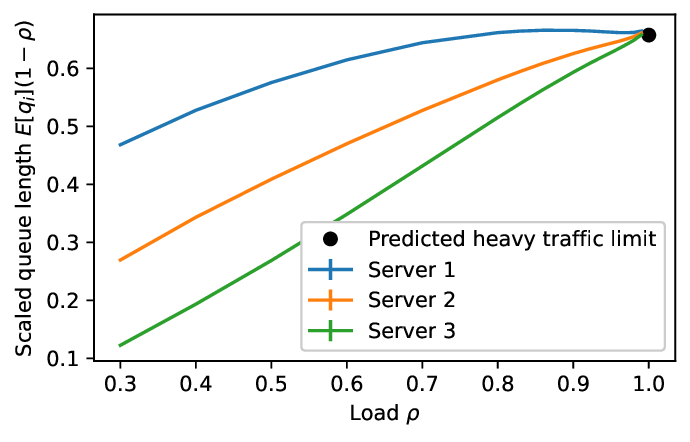}}}
\hfill
{\subfloat[PMF of individual queue length with load $\rho=0.98$, against limiting exponential distribution. Vertical axis is logarithmic.]{\includegraphics[width=0.49\textwidth]{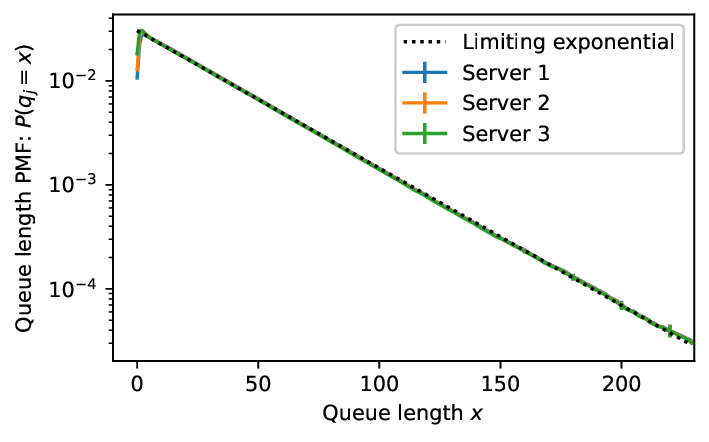}}}
}
% Caption
{Convergence of each server's queue length to heavy traffic.}
% Notes
{$10^9$ arrivals per data point, as 10 runs of $10^8$. 95\% CI shown, or too small to be visible at this scale. SSC assumption satisfied: setting \eqref{eq:main-setting}.\label{fig:convergence-by-load}}
\end{figure}

In \cref{fig:convergence-by-load}(a), we simulate the scaled mean queue length $\E{q_j}(1-\rho)$ for each server $j$, as a function of load $\rho$ in the setting given in \eqref{eq:main-setting}. The black dot shows the heavy traffic limit proven in \cref{thm:jsq-vq-convergence}, 
namely $\frac{1}{n}(1+k^*/\mu_\Sigma)$, which evaluates to $\frac{71}{108}$ in this setting. The mean queue lengths of all three servers converge smoothly to the predicted heavy traffic limit, as we proved would occur.

\cref{fig:convergence-by-load}(b) shows the probability mass functions $\Prob{q_j = x}$ each server $j$, at a relatively heavy load of $\rho = 0.98$. The distribution is almost identical to an exponential distribution with mean $\frac{1}{1-\rho}\frac{1}{n}(1+k^*/\mu_\Sigma)$ (in this case, $\frac{1775}{54}$), as predicted by the heavy traffic limit proven in \cref{thm:jsq-vq-convergence}.
All servers $j$ have nearly indistinguishable distributions.
Queue lengths very close to 0 are the
slowest part of the distribution to converge to the limiting exponential, while all larger queue lengths are extremely close to their limiting values even at $\rho = 0.98$.

For each queue $j$, the scaled mean queue length $\epsilon \E{q_j}$ converges to $\frac{1}{n}(1+k^*/\mu_\Sigma) = \frac{71}{108}$.

\subsection{State space collapse and mean under varying \texorpdfstring{$\alpha$}{alpha}}
\label{sec:varying-alpha-with-ssc-condition}

\begin{figure}
\FIGURE{
\subfloat[Mean SSC error per server $\E{|q_{\perp j}|}=\E{|q_j - q_\Sigma/n|}$. Horizontal axis is logarithmic, and vertical is linear.]{\includegraphics[width=0.45\textwidth]{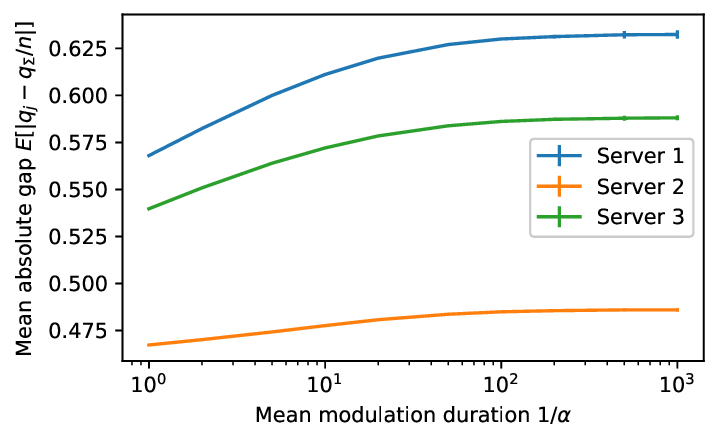}}
\hfill
\subfloat[Mean queue length $\E{q_j}$ and heavy traffic limit. Both axes are logarithmic.]{\includegraphics[width=0.45\textwidth]{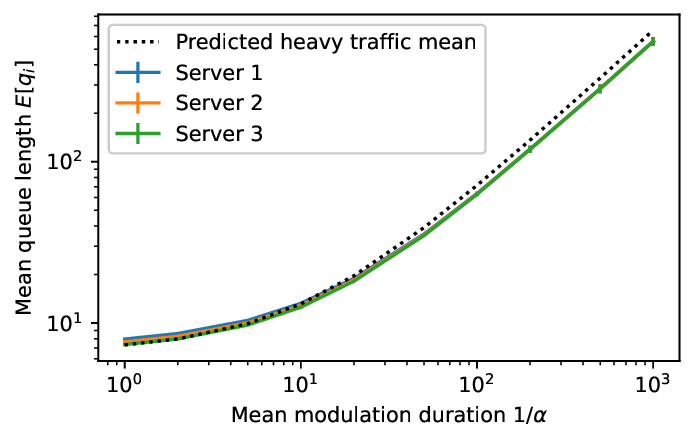}}
}
% Caption
{State space collapse quality with respect to mean modulation time $\frac{1}{\alpha}$.}
% Notes
{$10^9$ arrivals per data point, as 10 runs of $10^8$. 95\% CI shown. SSC assumption satisfied: setting \eqref{eq:adjustable-alpha-setting}. Load $\rho=0.95$.\label{fig:ssc-alpha-0}}
\end{figure}

In this section, we allow the modulation rate $\alpha$ to vary, and examine its effect on the tightness of the SSC and the convergence to the predicted mean queue distribution.

We are using a variant of the setting in \eqref{eq:main-setting}, where both load and modulation rate $\alpha$ are adjustable parameters:
\begin{align}
    \label{eq:adjustable-alpha-setting}
    \vlambda = [3r, 6r, 9r],
    [\mu_{ij}] = \begin{pmatrix}
        0.5 & 0.5 & 1 \\
        1 & 2.5 & 2 \\
        5 & 3 & 2.5
    \end{pmatrix},
    [\alpha_{i i'}] = \begin{pmatrix}
        0 & \alpha & 0 \\
        0 & 0 & \alpha \\
        \alpha & 0 & 0
    \end{pmatrix}.
\end{align}

Again, the load $\rho$ is equal to the parameter $r$, and the SSC load condition holds for all $r \ge \frac{5}{12}$. Consequently, our heavy traffic result \cref{thm:jsq-vq-convergence} applies to this setting.
In this setting, $k^* = \frac{7}{12}\frac{1}{\alpha}$.

In \cref{fig:ssc-alpha-0}(a), we show the simulated mean absolute gap between the queue length of each server and the average over all servers, at load $\rho = 0.95$, across a range of mean modulation durations $\frac{1}{\alpha}$. Our SSC result, \cref{prop:ssc}, implies that this gap should be very close to 0, and should not grow, even as the mean modulation duration $\frac{1}{\alpha}$ grows without bound.
We see exactly this behavior, with mean absolute gap remaining bounded below a single job, essentially as small a gap as is achievable, regardless of modulation rate $\alpha$.

In \cref{fig:ssc-alpha-0}(b), we simulate the effect on mean response time of changing mean modulation duration $\frac{1}{\alpha}$. Note that $k^*$, a key parameter in our mean queue length heavy traffic limit, is equal to $\frac{7}{12}\frac{1}{\alpha}$, so our heavy traffic result predicts that mean queue length will grow along a specific trajectory as modulation duration $\frac{1}{\alpha}$ grows, as we see replicated in simulation for each server. The approximation is close, but not exact, as expected at load $\rho = 0.95$.

\subsection{Varying \texorpdfstring{$\alpha$}{alpha} outside our SSC load condition}
\label{sec:empirical-outside-condition}

\begin{figure}
\FIGURE{
\subfloat[Mean SSC error per server $\E{|q_{\perp j}|}=\E{|q_j - q_\Sigma/n|}$.]{\includegraphics[width=0.49\textwidth]{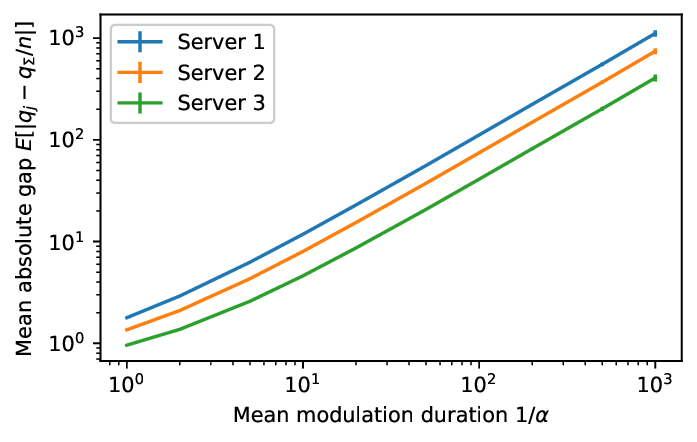}}
\hfill
\subfloat[Mean queue length $\E{q_j}$ and heavy-traffic limit.]{\includegraphics[width=0.49\textwidth]{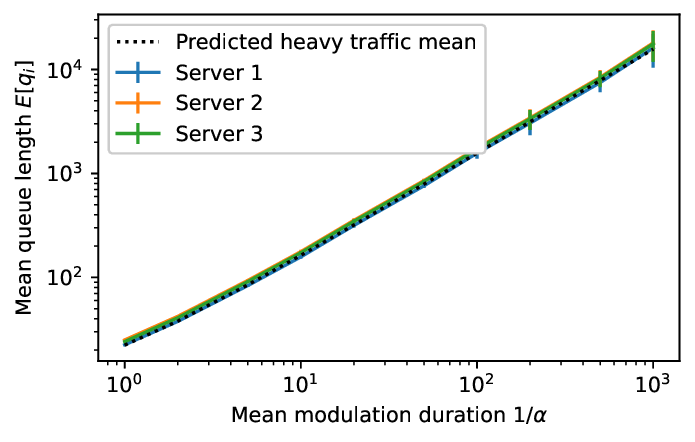}}
}
% Caption
{State space collapse quality with respect to mean modulation time $\frac{1}{\alpha}$ when the SSC load condition is not satisfied.}
% Note
{$10^9$ arrivals per data point, as 10 runs of $10^8$. 95\% CI shown. SSC assumption not satisfied: setting \eqref{eq:outside-ssc-setting}. Load $\rho=0.95$. Both axes logarithmic.\label{fig:ssc-alpha-1}}
\end{figure}

We now turn to a setting where our SSC load condition (\cref{def:condition_lambda_i-mu_i}) \emph{does not hold},
and as a consequence our heavy traffic results do not apply,
to examine the distinct behavior that arises in this setting.
\begin{align}
    \label{eq:outside-ssc-setting}
    \vlambda = [3r, 6r, 9r],
    [\mu_{ij}] = \begin{pmatrix}
        8 & 0.5 & 1 \\
        0.5 & 1 & 0.5 \\
        1.5 & 1.5 & 2.5
    \end{pmatrix},
    [\alpha_{i i'}] = \begin{pmatrix}
        0 & \alpha & 0 \\
        0 & 0 & \alpha \\
        \alpha & 0 & 0
    \end{pmatrix}.
\end{align}

Again, load $\rho$ is equal to the parameter $r$.
For SSC load condition specified in \cref{def:condition_lambda_i-mu_i}, note that $\mu_{i \Sigma} - n\mu_{i \min} = [6.5, 0.5, 1]$, so the condition fails in state $i=1$ for all $r < 1$, as $\lambda_1 < 3 < \mu_{\Sigma 1} - n \mu_{1,2} = 6.5$. As a consequence, our heavy traffic result \cref{thm:jsq-vq-convergence} does not apply to this setting.
In this setting, $k^* = \frac{169}{12} \frac{1}{\alpha}$.

In \cref{fig:ssc-alpha-1}(a), we see a very different behavior than in \cref{sec:varying-alpha-with-ssc-condition}, where the SSC load condition holds. Here, the mean absolute gap between different servers rises linearly with mean modulation duration $\frac{1}{\alpha}$, resulting in worse and worse SSC quality for larger and larger mean modulation duration. This example shows that when the SSC load condition fails, state space collapse can cease to hold entirely, or have completely different dependence on system parameters.

Nonetheless, in \cref{fig:ssc-alpha-1}(b), we see that despite the SSC quality worsening with larger mean modulation duration $\frac{1}{\alpha}$, mean queue length still follows the same predicted mean queue length curve as in \cref{sec:varying-alpha-with-ssc-condition}. In fact, the convergence to the heavy traffic prediction is even tighter in \cref{fig:ssc-alpha-1}(b). More research is needed to understand why the mean convergence holds in the absence of tight state space collapse, and how broadly this convergence holds.

\section{Conclusion}
\label{sec:conclusion}

We prove \cref{thm:jsq-vq-convergence},
the first heavy traffic convergence result for the infinite-state Markov-modulated Join-the-Shortest-Queue (JSQ) system.
Our proof relies on a sufficient condition for State Space Collapse (SSC) for JSQ, \cref{def:condition_lambda_i-mu_i},
and our SSC result is the first to have no dependence on the mixing time of the modulating chain.
We employ a novel variant of the transform method, using the Poisson equation for the modulating chain to obtain a key covariance result, \cref{thm:jsq-poisson-lemma}, which lies at the core of our proof.
Our empirical results in \cref{sec:empirical-outside-condition} show that SSC indeed becomes dependent on the mixing time of the modulating chain outside of our sufficient condition, but nonetheless, the JSQ system can maintain the same heavy traffic limiting behavior that we prove.
Future work is needed to understand this larger region of heavy-traffic convergence.
Another fruitful avenue to future work is to expand our JSQ result to the generalized switch.

%\THEEndNotes
% \begingroup \parindent 0pt \parskip 4ex
% \def\enotesize{\normalsize} 
% \theendnotes
% \endgroup

% Appendix here

\newpage

\begin{APPENDICES}
\section{Transform Method for an M/M/1 queue}
\label{app:transform-MM1}

In this section, we show that the steady-state distribution of the queue length in an $M/M/1$ queue with arrival rate $a$ and service rate $b$ is geometric with parameter $1-a/b$ (assuming $a<b$). We start by computing the drift of the function $\phi_s(q)=e^{-s q}$, with $s>0$:
\begin{align*}
    \Delta \phi_s(q) &= a e^{-s q}(e^{-s}-1) + b\ind{q>0} e^{-s q}(e^s-1) \\
    &= (a-e^s b)e^{-s q}(e^{-s}-1) +b\ind{q=0}e^s (e^{-s}-1),
\end{align*}
where the second equality results from the key property $e^{-sq} \ind{q=0}=\ind{q=0}$ and algebraic manipulations. Taking expectation with respect to the stationary distribution yields
\begin{align*}
    \E{e^{-s q}} = \frac{\Prob{q=0}}{1-e^{-s} a/b}.
\end{align*}
Setting $s=0$, one obtains that $\Prob{q=0}=1-a/b$. Hence,
\begin{align*}
    \E{e^{-s q}} = \frac{1-a/b}{1-e^{-s}(1-a/b)},
\end{align*}
which corresponds to the geometric distribution.

\section{Proof of SSC}
\label{app:ssc-proof}

The proof of SSC is standard in the drift-method literature. Specifically, we use a result first proved in \cite[Lemma 1]{Ery-Sri-2012-drift} for discrete-time systems. The bounds from \cite{Ery-Sri-2012-drift} were further tightened in \cite[Lemma 3]{Mag-Sri-2016-switch}, and later extended to continuous-time systems in \cite[Supplemental Material, Lemma 10]{Wang-Mag-Sri-2022-Bandwidth-Sharing}. We present this essential result below for completeness, and we use the notation from \cite[Lemma 1]{HL-Mag-2022-JSQ-Many-Server}.

\begin{lemma}\label{lem:weina}
    Let $\left\{X(t):t\in\bR_+ \right\}$ be a CTMC over a countable state space $\cX$, with transition rate matrix $G^X$. Suppose that it is irreducible, nonexplosive and positive recurrent, and it converges in distribution to a random variable $\widetilde{X}$ as $t\to\infty$. Consider a Lyapunov function $W:\cX\to\bR_+$ and suppose its drift satisfies the following conditions:
    \begin{enumerate}[label=(C\arabic*)]
        \item There exist constants $\gamma>0$ and $B>0$ such that $\Delta W(x)\leq -\gamma$ for any $x\in\cX$ with $W(x)>B$,
        \item $\nu_{\max}\defn \sup\left\{|W(x')-W(x)|: x,x'\in\cX \text{ and }G^X_{x,x'}>0 \right\}<\infty$,
        \item $\bar{G}\defn \sup\left\{-G^X_{x,x}: x\in\cX \right\}<\infty$.
    \end{enumerate}
    Then, for any nonnegative integer $p$, we have
    \begin{align}\label{eq:Weina-lemma-tail-bounds}
        P(W(\widetilde{X})>B+2\nu_{\max} p) \leq \left(\dfrac{G_{\max}\nu_{\max}}{G_{\max}\nu_{\max}+\gamma} \right)^{p+1}.
    \end{align}
    
    As a result, for any positive integer $r$, the $r\tth$ moment of $W(\widetilde{X})$ can be bounded as follows:
    \begin{align}\label{eq:Weinna-lemma-moments}
        \E{W(\widetilde{X})^r}\leq C_r,
    \end{align} 
    where
    \begin{align*}
            C_r &\defn (2B)^r + (4\nu_{max})^r\left(\dfrac{G_{max}\nu_{max}+\gamma}{\gamma} \right)^r r!
            \quad \& \quad
        G_{\max}&\defn \sup\left\{\sum_{x'\in\cX: W(x)<W(x')}G^X_{x,x'} :x\in\cX \right\}.
    \end{align*}
\end{lemma}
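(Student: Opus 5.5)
The plan is the standard level-crossing argument of Bertsimas--Gamarnik--Tsitsiklis type. We apply the zero-drift property from \cref{def:Drift} not to $W$ itself but to a truncated version that is flat below a chosen level $c$. This yields a one-step recursion between tail probabilities at levels $2\nu_{\max}$ apart, and iterating it gives \eqref{eq:Weina-lemma-tail-bounds}. Write $\nu=\nu_{\max}$ and, for a level $c$, define
\begin{align*}
    \widehat{W}_c(x)\defn \max\{c,\,W(x)\}.
\end{align*}

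\textbf{Step 1 (drift of $\widehat W_c$ in three regions).} Fix $c\ge B-\nu$ and split the states into three regions.
\begin{enumerate}[label=(\roman*)]
    \item If $W(x)>c+\nu$, then by (C2) no transition can bring $W$ to or below $c$. Hence $\Delta\widehat W_c(x)=\Delta W(x)$, and since $W(x)>c+\nu\ge B$, condition (C1) gives $\Delta\widehat W_c(x)\le-\gamma$.
    \item If $W(x)\le c-\nu$, then every reachable $x'$ has $W(x')\le c$. So $\widehat W_c$ is constant along transitions and $\Delta\widehat W_c(x)=0$.
    \item If $c-\nu<W(x)\le c+\nu$, only transitions that increase $W$ can increase $\widehat W_c$, and each increases it by at most $\nu$. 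Downward moves contribute nonpositively. Hence $\Delta\widehat W_c(x)\le G_{\max}\nu$ by the definition of $G_{\max}$.
\end{enumerate}

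\textbf{Step 2 (zero drift and the recursion).} Setting $\E{\Delta\widehat W_c(\widetilde X)}=0$ and using the three bounds gives
\begin{align*}
    0\le -\gamma\,P\big(W(\widetilde X)>c+\nu\big)+G_{\max}\nu\,\Big(P\big(W(\widetilde X)>c-\nu\big)-P\big(W(\widetilde X)>c+\nu\big)\Big).
\end{align*}
Rearranging yields
\begin{align*}
    P\big(W(\widetilde X)>c+\nu\big)\le \frac{G_{\max}\nu}{G_{\max}\nu+\gamma}\,P\big(W(\widetilde X)>c-\nu\big).
\end{align*}
Now choose $c=B-\nu+2\nu p$ for $p=0,1,2,\dots$, so that $c+\nu=B+2\nu p$ and $c-\nu=B+2\nu(p-1)$. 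Induction on $p$, starting from the trivial bound $1$ for the probability at $p=-1$, gives \eqref{eq:Weina-lemma-tail-bounds}. The moment bound \eqref{eq:Weinna-lemma-moments} follows routinely from the layer-cake formula $\E{W^r}=\int_0^\infty r u^{r-1}P(W>u)\,du$. On $[0,2B]$ we bound the probability by $1$; beyond that we use the geometric tail, which is at least as good as an exponential tail with rate of order $\gamma/(\nu(G_{\max}\nu+\gamma))$. This gives the stated $(2B)^r+(4\nu)^r(\cdot)^r r!$ form.

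\textbf{Main obstacle: justifying zero drift.} The delicate step is using $\E{\Delta\widehat W_c(\widetilde X)}=0$, since \cref{def:Drift} requires $\E{\widehat W_c(\widetilde X)}<\infty$, which is essentially what we are trying to prove. I would first handle this by a second truncation, $\widehat W_{c,M}=\min\{M,\max\{c,W\}\}$. This function is bounded, and (C3) gives bounded total rates, so the zero-drift identity holds by positive recurrence and nonexplosiveness. The only new contribution is in the region near the cap, $W(x)\in(M-\nu,M+\nu]$, where the cap can only lower the positive part of the drift. This region therefore adds only a nonpositive term on the $-\gamma$ side, bounded in absolute value by a multiple of $P(W>M-\nu)$. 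Letting $M\to\infty$ makes this term vanish, recovering the displayed recursion. If this bookkeeping near the cap turns out messier than expected, the fallback is to run the recursion directly for the capped function, obtaining the tail bound uniformly in $M$, and then pass to the limit.
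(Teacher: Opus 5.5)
Your proof is correct, and it follows a different route from the paper only in the sense that the paper gives no proof at all. The paper imports this lemma from the drift-method literature (Eryilmaz--Srikant; Maguluri--Srikant; the continuous-time version in Wang et al.). The bound in exactly this form is the Bertsimas--Gamarnik--Tsitsiklis tail bound. Its standard proof is the level-crossing argument you give: the three-region drift estimate for $\max\{c,W\}$ yields a recursion between tail probabilities $2\nu_{\max}$ apart.

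The citation buys brevity. Your version buys a self-contained argument, and it handles a point that \cref{def:Drift} would otherwise leave circular: setting the drift of $\max\{c,W\}$ to zero needs $\E{W(\widetilde X)}<\infty$, which is part of what is being proved. Capping at $M$ makes the test function bounded, so $\pi G^X=0$, (C3) and Fubini give zero drift legitimately.

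One bookkeeping correction in that step. For $W(x)\in(M-\nu_{\max},M]$ (with $M$ large) the cap does only lower the drift, so the $-\gamma$ bound survives. But on $\{W>M\}$ the capped function has drift merely $\le 0$. You therefore lose $\gamma\,P(W(\widetilde X)>M)$ on the right-hand side. That term vanishes as $M\to\infty$, so your recursion holds as written.

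Your ``routine'' moment step does land exactly on $C_r$. Set $\rho_0=G_{\max}\nu_{\max}/(G_{\max}\nu_{\max}+\gamma)$. The tail bound gives $P(W(\widetilde X)>u)\le\rho_0^{(u-B)/(2\nu_{\max})}$ for $u\ge B$. Use $u-B\ge u/2$ on $u\ge 2B$, together with $\ln(1/\rho_0)=\ln\bigl(1+\gamma/(G_{\max}\nu_{\max})\bigr)\ge\gamma/(G_{\max}\nu_{\max}+\gamma)$. The layer-cake integral is then at most $(2B)^r+(4\nu_{\max})^r\bigl((G_{\max}\nu_{\max}+\gamma)/\gamma\bigr)^r r!$.
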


\cref{lem:weina} provides an upper bound on the moments of $W(\widetilde{X})$, but not on the moment generating function. In the next corollary, we compute such bound using \eqref{eq:Weinna-lemma-moments}. We present the proof in \cref{sec:proof-cor:ssc-expo}
\begin{corollary}\label{cor:ssc-expo}
    Let $\{X(t):t\in\bR_+\}$ be a CTMC with state space $\cX$ and $W:\cX\to\bR_+$ satisfy the conditions in \cref{lem:weina}. Let
    \begin{align*}
        \Theta \defn \dfrac{\gamma}{4\nu_{\max}(G_{\max}\nu_{\max}+\gamma)},\quad \& \quad 
        C_{\exp} \defn e^{2B\Theta} + \dfrac{\gamma}{\gamma - (4\Theta\nu_{\max})(G_{\max}\nu_{\max}+\gamma)}.
    \end{align*}    
    Then, for all $\theta\in\bR$ with $|\theta|<\Theta$, we have 
    $\E{e^{\theta W(\widetilde{X})}} \leq C_{\exp}$.
\end{corollary}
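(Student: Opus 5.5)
The natural route is to bound the moment generating function directly from the moment bounds \eqref{eq:Weinna-lemma-moments}, or more cleanly from the tail bound \eqref{eq:Weina-lemma-tail-bounds}, by writing $\E{e^{\theta W(\widetilde{X})}}$ as a power series. First, since $W\geq 0$, for $\theta\le 0$ we trivially have $\E{e^{\theta W(\widetilde X)}}\le 1\le C_{\exp}$. So it suffices to treat $0\le \theta<\Theta$, where $e^{\theta W}\le e^{|\theta| W}$ is monotone in $\theta$.

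For $0\le\theta<\Theta$, I expand $e^{\theta W}=\sum_{r\ge0}\theta^r W^r/r!$ and use Tonelli (all terms nonnegative) to interchange sum and expectation. I then bound each moment. Rather than using $C_r$ as a single quantity, I split it: from the proof of \cref{lem:weina} the $r$th moment satisfies $\E{W^r}\le (2B)^r+(4\nu_{\max})^r\big(\tfrac{G_{\max}\nu_{\max}+\gamma}{\gamma}\big)^r r!$, i.e.\ exactly $C_r$. Summing the first part gives $\sum_r (2B\theta)^r/r! = e^{2B\theta}\le e^{2B\Theta}$. Summing the second part gives a geometric series $\sum_r \big(4\theta\nu_{\max}(G_{\max}\nu_{\max}+\gamma)/\gamma\big)^r$. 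Its ratio is strictly less than $1$ precisely because $\theta<\Theta=\gamma/(4\nu_{\max}(G_{\max}\nu_{\max}+\gamma))$, and its sum is $\gamma/(\gamma-4\theta\nu_{\max}(G_{\max}\nu_{\max}+\gamma))$.

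The main obstacle is a small mismatch. The stated $C_{\exp}$ has the geometric denominator evaluated at $\Theta$ itself, which would be zero. So the intended reading must be either a bound holding for each fixed $\theta$, with the constant depending on $\theta$ and monotone increasing in $|\theta|$, or a bound uniform on $|\theta|\le\Theta'$ for some $\Theta'<\Theta$.

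I would handle this by proving the bound $e^{2B|\theta|}+\gamma/(\gamma-4|\theta|\nu_{\max}(G_{\max}\nu_{\max}+\gamma))$ for each $|\theta|<\Theta$. This suffices for its use in \cref{prop:ssc}. If a uniform constant is needed, I would shrink $\Theta$, for instance to half its value, so that the denominator stays bounded away from zero. The $r=0$ term is counted in both parts, which only loosens the bound and so is harmless.
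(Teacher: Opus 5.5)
Your proof is correct and follows the paper's route: expand $e^{\theta W}$ as a power series, bound $\E{W(\widetilde{X})^r}$ by $C_r$, and sum the two pieces into $e^{2B|\theta|}$ plus a geometric series whose ratio is below $1$ exactly when $|\theta|<\Theta$. Your two deviations are improvements rather than gaps. First, the stated $C_{\exp}$ does have a zero denominator, since $4\Theta\nu_{\max}(G_{\max}\nu_{\max}+\gamma)=\gamma$; the paper's final substitution of $\Theta$ for $\theta$ produces exactly this, so your $\theta$-dependent bound (or a uniform one after shrinking $\Theta$) is what is actually provable, and it is all that is needed downstream, where $\theta=4s\epsilon n\to 0$. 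Second, disposing of $\theta\le 0$ via $e^{\theta W}\le 1$ avoids the paper's termwise comparison $\theta^r\E{W^r}\le\theta^r C_r$ for signed $\theta$, which is invalid for odd $r$ when $\theta<0$.
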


To prove \cref{prop:ssc}, we use the following definitions and lemmas. Given a vector $\vq\in\bZ_+^n$, define the following functions:
\begin{align}
    & \widehat{W}(i,\vq)\defn \left\|\vq \right\|^2 ,\quad \widehat{W}_{\parallel}(i,\vq)\defn \| \vq_\parallel \|^2 ,\quad W_{\perp}(i,\vq)\defn \left\| \vq_\perp \right\|  \label{eq:What-Wpar-Wperp-i}.
\end{align}

Observe
\begin{align*}
    \widehat{W}_{\parallel}(i,\vq)= \| \vq_\parallel \|^2 = \sum_{j'=1}^n \left(\frac{1}{n}\sum_{j=1}^n q_j\right)^2  = \frac{1}{n}\left(\sum_{j=1}^n q_j\right)^2.
\end{align*}

We now prove \cref{prop:ssc}, using the conditions from \cref{lem:weina} for $W(\vq)=W_\perp(\vq)=\|\vq_\perp\|$.

\begin{proof}{Proof of conditions (C2) and (C3)}
    For condition (C2), observe that the only values of $\vq'$ with positive rate are when there is an arrival, a departure, or a transition in the modulating Markov chain. The last case does not affect the test function $W_\perp(\vq)$. An arrival or departure changes one queue length by 1, so $\|\vq\|$ changes by at most 1, and $\|\vq_\parallel\|$ by at most $1/\sqrt{n}$. Hence, condition $(C2)$ is satisfied with $\nu_{\max} = 1+\frac{1}{\sqrt{n}}$.

    For condition $(C3)$, observe that 
        $\bar{G} =  \lambda_{\max} + n\mu_{\max} + \max_{i\in\cZ} \sum_{i'\neq i} \alpha_{ii'}$ 
    is finite. Hence, $(C3)$ is satisfied. 
    \Halmos
\end{proof}

Proving condition $(C1)$ is more involved, and we use the following lemma, which was proved in \cite[Lemma 7]{HL-Mag-2022-JSQ-Many-Server}.
\begin{lemma}\label{lemma:ssc-concavity}
    Given a vector $\vq\in\bZ_+^n$, consider the functions $\widehat{W}(i,\vq),\, \widehat{W}_\parallel(i,\vq)$ and $W_\perp(i,\vq)$ defined in \cref{eq:What-Wpar-Wperp-i}. Then,
    \begin{align*}%\label{eq:Wperp-concave}
        \Delta W_\perp(i,\vq) \leq 
        \dfrac{1}{2\left\|\vq_\perp\right\|}\left(\Delta \widehat{W}(i,\vq) - \Delta \widehat{W}_\parallel (i,\vq) \right).
    \end{align*}
\end{lemma}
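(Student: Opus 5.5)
The plan is to exploit the Pythagorean decomposition of $\vq$ together with the concavity of the square root, applied transition by transition. First, since $\vq_\parallel$ is the orthogonal projection of $\vq$ onto the span of $\vone$ and $\vq_\perp=\vq-\vq_\parallel$, we have $\langle \vq_\parallel,\vq_\perp\rangle=0$. Hence
\begin{align*}
    \|\vq_\perp\|^2=\|\vq\|^2-\|\vq_\parallel\|^2=\widehat{W}(i,\vq)-\widehat{W}_\parallel(i,\vq),
\end{align*}
so that $W_\perp(i,\vq)=\sqrt{\widehat{W}(i,\vq)-\widehat{W}_\parallel(i,\vq)}$ for every state $(i,\vq)$. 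This identity holds for every $\vq\in\bZ_+^n$, in particular at both endpoints of any transition.

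Next, I would use the elementary concavity inequality: for $x>0$ and $y\geq 0$,
\begin{align*}
    \sqrt{y}-\sqrt{x}\leq \frac{y-x}{2\sqrt{x}}.
\end{align*}
This is the tangent-line bound for the concave function $\sqrt{\cdot}$; equivalently, it follows from $(\sqrt{y}-\sqrt{x})^2\geq 0$. Fix a state $(i,\vq)$ with $\|\vq_\perp\|>0$; the statement implicitly requires this, since otherwise the right-hand side is undefined. For each possible next state $(i',\vq')$, set $x=\|\vq_\perp\|^2$ and $y=\|\vq'_\perp\|^2$. This gives
\begin{align*}
    W_\perp(i',\vq')-W_\perp(i,\vq)\leq \frac{1}{2\|\vq_\perp\|}\Big(\big(\widehat{W}(i',\vq')-\widehat{W}(i,\vq)\big)-\big(\widehat{W}_\parallel(i',\vq')-\widehat{W}_\parallel(i,\vq)\big)\Big).
\end{align*}

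Finally, I would multiply each such inequality by the corresponding transition rate of the CTMC $(Z(t),\vq(t))$ and sum over all next states, using the drift notation of \cref{def:Drift}. The transition types are arrivals routed by JSQ, departures from nonempty queues, and modulating-chain jumps. The rates are nonnegative, so the inequalities are preserved. The factor $1/(2\|\vq_\perp\|)$ depends only on the current state, so it pulls out of the sum. The left-hand side becomes $\Delta W_\perp(i,\vq)$ and the right-hand side becomes $\frac{1}{2\|\vq_\perp\|}\big(\Delta\widehat{W}(i,\vq)-\Delta\widehat{W}_\parallel(i,\vq)\big)$, which is the claimed bound.

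The only points needing care are bookkeeping. Modulating transitions leave $\vq$ unchanged, so they contribute zero to both sides. All sums are finite because the total outgoing rate is bounded by $\lambda_{\max}+n\mu_{\max}+\sup_i\alpha_{i\bullet}$; this uses the assumption, already implicit in (C3), that the modulating chain has uniformly bounded jump rates. I expect no real obstacle beyond noting the case $\|\vq_\perp\|=0$, where the lemma is vacuous and is never invoked, because (C1) concerns only states with $W_\perp>B>0$.
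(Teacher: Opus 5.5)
Your proof is correct and follows essentially the same route as the argument the paper relies on; the paper does not reprove the lemma but cites \cite[Lemma 7]{HL-Mag-2022-JSQ-Many-Server}, whose argument is the Pythagorean identity $\|\vq_\perp\|^2=\widehat{W}-\widehat{W}_\parallel$, the tangent-line bound $\sqrt{y}-\sqrt{x}\le (y-x)/(2\sqrt{x})$ applied to each transition, and summation against the nonnegative rates. The uniform rate bound you invoke is not actually needed here, since modulating jumps contribute zero and only finitely many queue-changing transitions leave any fixed state.
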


\begin{proof}{Proof of (C1).}
     We use \Cref{lemma:ssc-concavity} and start by computing $\Delta \widehat{W}(i,\vq)$ and $\Delta \widehat{W}_\parallel(i,\vq)$. Let $j^*$ be the queue to which a new arrival is routed, and let $q_{\min}\defn \min_{j\in\{1,\ldots,n\}} q_j$ be its number of jobs. Then, 
    \begin{align}
        \Delta \widehat{W}(i,\vq) &= \lambda_i\left(\|\vq+\ve^{(j^*)}\|^2 - \|\vq\|^2 \right)  + \sum_{j=1}^n \mu_{ij}\ind{q_j>0}\left(\|\vq-\ve^{(j)}\|^2 - \|\vq\|^2 \right) \nonumber \\
        &= \lambda_i \left(2q_{\min} + 1 \right) + \left(\sum_{j=1}^n \mu_{ij} \ind{q_j>0}(-2q_j) \right) \nonumber \\
        &= \big(\lambda_i + \sum_{j=1}^n \ind{q_j>0}\mu_{ij} \big) + 2\lambda_i q_{\min} - 2\big( \sum_{j=1}^n \mu_{ij}q_j\big). \label{eq:ssc-DeltaV-1}
    \end{align}

    Let
    \begin{align*}
        \delta_i \defn \dfrac{1}{n}\left(1 - \dfrac{\lambda_i}{\mu_{i\Sigma}} \right),
    \end{align*}
    and define the hypothetical arrival rate vector in state $i$ as 
    \begin{align}\label{eq:def-lambda-prime}
        \vlambda_i' \defn \vmu_i - \delta_i \mu_{i\Sigma}\vone.
    \end{align}
    Notice that $\lambda_i =\lambda'_{i\Sigma}$, with $\lambda'_{i\Sigma}\defn \sum_{j=1}^n \lambda'_{ij}$.  Then, adding and subtracting $2\langle \vlambda_i',\vq\rangle$ to \eqref{eq:ssc-DeltaV-1}, we obtain
    \begin{align}\label{eq:ssc-DeltaV-2}
        \Delta \widehat{W}(i,\vq) &= \big(\lambda_i + \sum_{j=1}^n \ind{q_j>0}\mu_{ij} \big) + 2 \big( \lambda_i q_{\min} - \sum_{j=1}^n \lambda_{ij}' q_j \big) - 2\big(\sum_{j=1}^n \mu_{ij}q_j \big) + 2\big(\sum_{j=1}^n \lambda'_{ij}q_j \big). 
    \end{align}
    The first term is bounded by $\lambda_i+\mu_{i\Sigma}$, which is also bounded. For the second term, we obtain
    \begin{align}
        \lambda_i q_{\min} - \sum_{j=1}^n \lambda_{ij}' q_j &\stackrel{(a)}{=} (\lambda_i - \lambda'_{i\Sigma}) q_{\min} + \sum_{j=1}^n \lambda_{ij}' (q_{\min}-q_j) \nonumber \\
        &\stackrel{(b)}{=} \sum_{j=1}^n \lambda_{ij}' (q_{\min}-q_j) \nonumber \\
        &\stackrel{(c)}{\leq} - \lambda_{i\min}' \sum_{j=1}^n (q_j-q_{\min}), \label{eq:ssc-DeltaV-q_perp0}
    \end{align}
    where $(a)$ holds by adding and substracting $\lambda_{i\Sigma}'q_{\min}$ and rearranging terms; $(b)$ because $\lambda_{i\Sigma}'=\lambda_i$ by definition of $\vlambda_i'$; and $(c)$ because $q_j\geq q_{\min}$ for all $j$, and letting $\lambda_{i\min}' \defn \min_{j\in\{1,\ldots,n\}} \lambda_{ij}'>0$. Notice that $\lambda_{i \min}'>0$ is equivalent to the SSC load condition:
    \begin{align*}
        \lambda_i > \mu_{i\Sigma} - n\mu_{i \min}.
    \end{align*}
    We now bound \eqref{eq:ssc-DeltaV-q_perp0}. We have
    \begin{align} \label{eq:ssc-DeltaV-q_perp1}
        \sum_{j=1}^n (q_j-q_{\min}) &= \|\vq-q_{\min}\vone\|_1
        \stackrel{(a)}{\geq} \|\vq-q_{\min}\vone\|
        \stackrel{(b)}{\geq} \|\vq-\vq_\parallel\|
        \stackrel{(c)}{=} \|\vq_\perp\|,
    \end{align}
    where $(a)$ holds because the Euclidean (L2) norm upper bounds the L1 norm; $(b)$ because $\vq_\parallel$ is the projection of $\vq$ on $\vone$, so it minimizes the function $\psi(x)=\|\vq-x\vone\|$; and $(c)$ by definition of $\|\vq_\perp\|$. 
    
    Using \eqref{eq:ssc-DeltaV-q_perp1} in \eqref{eq:ssc-DeltaV-q_perp0}, we obtain
    \begin{align}\label{eq:ssc-DeltaV-q_perp2}
        \lambda_i q_{\min} - \sum_{j=1}^n \lambda_{ij}' q_j \leq -\lambda'_{i\min} \|\vq_\perp\|.
    \end{align}
    
    Then, using \eqref{eq:ssc-DeltaV-q_perp2} in \eqref{eq:ssc-DeltaV-2} and by definition of $\delta_i$, we obtain
    \begin{align}\label{eq:ssc-DeltaV-ready}
        \Delta \widehat{W}(i,\vq) &= \big(\lambda_i + \sum_{j=1}^n \ind{q_j>0}\mu_{ij} \big) - 2\lambda'_{i\min}\|\vq_\perp\| - 2\delta_i \mu_{i\Sigma}\big(\sum_{j=1}^n q_j\big).
    \end{align}

    Now we compute $\Delta \widehat{W}_\parallel(i,\vq)$. By definition of drift, we obtain
    \begin{align}
        \Delta \widehat{W}_\parallel(i,\vq) &= \frac{\lambda_i}{n}\Big( (\sum_{j=1}^n q_j + 1)^2 - (\sum_{j=1}^n q_j)^2\Big) + \frac{1}{n}\Big(\sum_{j=1}^n \ind{q_j>0}\mu_{ij}\Big)\Big( (\sum_{j=1}^n q_j - 1)^2 - (\sum_{j=1}^n q_j)^2\Big) \nonumber \\
        &= \dfrac{\lambda_i}{n} (2q_\Sigma + 1) + \frac{1}{n}\Big(\sum_{j=1}^n \ind{q_j>0}\mu_{ij}\Big)(-2q_\Sigma + 1) \nonumber \\
        &= \frac{1}{n}\Big(\lambda_i + \sum_{j=1}^n \ind{q_j>0}\mu_{ij} \Big) + 2\lambda_i \Big(\frac{q_\Sigma}{n}\Big) - 2\Big( \sum_{j=1}^n \ind{q_j>0} \mu_{ij}\Big) \Big(\frac{q_\Sigma}{n}\Big). \label{eq:ssc-DeltaV_par-ready}
    \end{align}

    Therefore, taking the difference of the drifts \eqref{eq:ssc-DeltaV-ready} and \eqref{eq:ssc-DeltaV_par-ready} and using that $\ind{q_j>0}\leq 1$ and $1-1/n\geq 0$, we obtain
    \begin{align*}
        \Delta \widehat{W}(i,\vq) - \Delta \widehat{W}_\parallel (i,\vq) &\leq \left(1-\frac{1}{n}\right) \big(\lambda_i + \mu_{i\Sigma} \big) - 2\lambda_{i\min}'\|\vq_\perp\| \\
        &\; - 2\delta_i \mu_{i\Sigma}q_\Sigma - 2\lambda_i \Big(\frac{q_\Sigma}{n}\Big) + 2\Big( \sum_{j=1}^n \ind{q_j>0} \mu_{ij}\Big) \Big(\frac{q_\Sigma}{n}\Big) .
    \end{align*}

    For the last term, observe
    \begin{align*}
        \sum_{j=1}^n \ind{q_j>0} \mu_{ij} &\leq \sum_{j=1}^n \mu_{ij}
        = \sum_{j=1}^n (\lambda_{ij}' + \delta_i \mu_{i\Sigma})
        = \lambda_{i\Sigma}' + n\delta_i \mu_{i\Sigma} = \lambda_i + n\delta_i \mu_{i\Sigma}.
    \end{align*}
    
    Substituting above, we obtain
    \begin{align*}
        \Delta \widehat{W}(i,\vq) - \Delta V_\parallel (i,\vq) &\leq \left(1-\frac{1}{n}\right) \big(\lambda_i + \mu_{i\Sigma} \big) - 2\lambda_{i\min}'\|\vq_\perp\|.
    \end{align*}

    Finally, we use the last expression in \Cref{lemma:ssc-concavity} to obtain
    \begin{align*}
        \Delta W_\perp(i,\vq) &\leq \dfrac{1}{2\|\vq_\perp\|}\left( \left(1-\frac{1}{n}\right) \big(\lambda_i + \mu_{i\Sigma} \big) - 2\lambda_{i\min}'\|\vq_\perp\| \right) \\
        &= \left(1-\frac{1}{n}\right) \left(\frac{\lambda_i + \mu_{i\Sigma}}{2\|\vq_\perp\|} \right)- \lambda_{i\min}'.
    \end{align*}
    Hence, condition (C1) is satisfied with
    \begin{align*}
        \gamma = \frac{1}{2} \min_{i\in\cZ} \lambda_{i \min}',\quad B = \left(1-\frac{1}{n}\right)\max_{i\in\cZ} \left(\frac{\lambda_i + \mu_{i\Sigma}}{\lambda_{i \min}'}\right).
    \end{align*}
    This completes the proof of SSC. \Halmos
\end{proof}

\subsection{SSC and exponential bounds}
\label{sec:proof-cor:ssc-expo}

\begin{proof}{Proof of \Cref{cor:ssc-expo}}
    Considering the Taylor expansion of $e^{\theta W(\widetilde{X})}$ for $\theta\in\bR$, we have
    \begin{align}\label{eq:exp-W-sum}
        e^{\theta W(\widetilde{X})} &= \sum_{r=0}^\infty \dfrac{\theta^r W(\widetilde{X})^r}{r!}.
    \end{align}
    To compute the expectation, we need to verify that the conditions of the dominated convergence theorem are satisfied. To do so, we show that $\E{e^{|\theta| W(\widetilde{X})}}<\infty$. We have
    \begin{align*}
        \E{e^{|\theta| W(\widetilde{X})}} &= \sum_{r=0}^\infty \dfrac{|\theta|^r \E{W(\widetilde{X})^r}}{r!} \leq \sum_{r=0}^\infty \dfrac{|\theta|^r C_r}{r!},
    \end{align*}
    where $C_r$ is defined in \cref{lem:weina}. We now bound the last sum as follows:
    \begin{align*}
        \sum_{r=0}^\infty \dfrac{|\theta|^r C_r}{r!} &= 
        \sum_{r=0}^\infty \dfrac{(2B|\theta|)^r}{r!} + \sum_{r=0}^\infty \left((4 |\theta| \nu_{\max}) \left(\dfrac{G_{\max}\nu_{\max}+\gamma}{\gamma}\right)\right)^r \\
        &\stackrel{(*)}{=} e^{2B|\theta|} + \dfrac{\gamma}{\gamma - (4|\theta|\nu_{\max}) (G_{\max}\nu_{\max}+\gamma)},
    \end{align*}
    where $(*)$ holds by noticing that the first term is the Taylor expansion of the exponential function, and the second term corresponds to a geometric sum. The latter converges to the corresponding fraction if and only if
    \begin{align*}
        (4 |\theta| \nu_{\max}) \left(\dfrac{G_{\max}\nu_{\max}+\gamma}{\gamma}\right) < 1 \quad \iff\quad 
        |\theta| < \dfrac{\gamma}{4\nu_{\max}(G_{\max}\nu_{\max} + \gamma)}=\Theta.
    \end{align*}
    We have shown that
    \begin{align*}
        \E{e^{|\theta| W(\widetilde{X})}} \leq \sum_{r=0}^\infty \dfrac{|\theta|^r C_r}{r!} < \infty.
    \end{align*}
    Hence, we can interchange the sum and expectation in \eqref{eq:exp-W-sum}. Following similar steps to obtain a bound, we complete the proof that if $|\theta| < \Theta$, then
    \begin{align*}
        \E{e^{\theta W(\widetilde{X})}} &= \sum_{r=0}^\infty \dfrac{\theta^r \E{W(\widetilde{X})^r}}{r!} \\
        &\leq e^{2B\theta} + \dfrac{\gamma}{\gamma - (4\theta\nu_{\max}) (G_{\max}\nu_{\max}+\gamma)} \\
        &\leq e^{2B\Theta} + \dfrac{\gamma}{\gamma - (4\Theta\nu_{\max}) (G_{\max}\nu_{\max}+\gamma)}. \Halmos
    \end{align*}
\end{proof}

\section{Details of the JSQ system proof}
\label{sec:details-JSQ-thm-proof}

In this section we provide the details of the bound for \eqref{eq:mgf-ssc-step1}. We use Cauchy-Schwarz inequality, as follows:
\begin{align}
    \E{\sum_{j=1}^n \mu_{ij} \ind{\tq_j=0} (e^{s\epsilon n \tq_{\perp j}}-1)} 
    &\stackrel{(a)}{\leq} \sqrt{ \E{\sum_{j=1}^n \mu_{ij}^2 \ind{\tq_j=0} }} \sqrt{ \E{\sum_{j=1}^n (e^{ s\epsilon n \tq_{\perp j}}-1)^2} } \nonumber \\
    &\stackrel{(b)}{\leq} \sqrt{\mu_{\max}} \sqrt{\E{\sum_{j=1}^n \mu_{ij}\ind{\tq_j=0}}}  \sqrt{\E{\sum_{j=1}^n (e^{ s\epsilon n \tq_{\perp j}}-1)^2} } \nonumber \\
    &\stackrel{(c)}{=} \sqrt{\mu_{\max} \mu_\Sigma \epsilon} \sqrt{\E{\sum_{j=1}^n (e^{ s\epsilon n \tq_{\perp j}}-1)^2} },
    \label{eq:mgf-ssc-step2}
\end{align}
where 
$(a)$ uses the Cauchy-Schwarz inequality and that $\ind{\tq_j=0}^2=\ind{\tq_j=0}$; 
in $(b)$ we bounded $\mu_{ij} \leq \mu_{\max}$; 
and $(c)$ holds by \eqref{jsq-eq:service-prob-empty-queue}. 
Finally, we bound the last term as follows:
\begin{align}
    0\leq \left|\E{\sum_{j=1}^n (e^{ s\epsilon n \tq_{\perp j}}-1)^2} \right|
    &\stackrel{(a)}{\leq} \E{\sum_{j=1}^n |e^{ s\epsilon n \tq_{\perp j}}-1|^2}  \nonumber \\
    &\stackrel{(b)}{\leq} \E{\sum_{j=1}^n |s\epsilon n \tq_{\perp j}|^2 \,e^{2s\epsilon n|\tq_{\perp j}|} } \nonumber \\
    &\stackrel{(c)}{\leq} (s\epsilon n)^2 \sqrt{ \E{\sum_{j=1}^n |\tq_{\perp j}|^4} } \sqrt{ \E{\sum_{j=1}^n e^{4 s\epsilon n |\tq_{\perp j}|}} } ,
    \label{eq:mgf-ssc-step3}
  \end{align}
where $(a)$ holds by the triangle inequality; $(b)$ holds because $e^{|x|}-1 \leq |x| e^{|x|}$ for all $x\in\bR$; $(c)$ holds by the Cauchy-Schwarz inequality. To finalize the proof, we use the bounds from \cref{prop:ssc}. Specifically, for the first term, we have
\begin{align}\label{eq:mgf-ssc-step4}
    \E{\sum_{j=1}^n |\tq_{\perp j}|^4} &= \E{\|\tvq_\perp\|_4^4} 
    \stackrel{(a)}{\leq} \E{\|\tvq_\perp\|_{2}^4} 
    \stackrel{(b)}{\leq} C_4.
\end{align}
where 
$(a)$ holds by the inequality between the $L^2$ and $L^4$ norms; 
and $(b)$ by \eqref{eq:qperp-bound-poly} in \Cref{prop:ssc}. For the second term, notice
\begin{align}\label{eq:mgf-ssc-step5}
    \E{\sum_{j=1}^n e^{4s\epsilon n |\tq_{\perp j}|}} 
    &\leq n \E{e^{4s\epsilon n \|\tvq_{\perp}\|}}
    \stackrel{(*)}{\leq} n C_{\exp},    
\end{align}
where $(*)$ holds if and only if $4s\epsilon n < \Theta$. From the proof of \cref{prop:ssc}, we know
\begin{align*}
    \Theta &= \frac{\gamma}{4\nu_{\max}(G_{\max}\nu_{\max}+\gamma)} \\
    &= \frac{(1/2) \min_{i\in\cZ} \lambda_{i,\min}'}{4(1 + 1/\sqrt{n}) \left( (\lambda_{\max}n\mu_{\max}\max_{i\in\cZ} \alpha_{i\bullet})(1 + 1/\sqrt{n}) + (1/2)\min_{i\in\cZ} \lambda_{i,\min}'\right)}.
\end{align*}
By definition, $\epsilon < 1$. Hence, the proof holds for all $s < \frac{\Theta}{4n}$. Now we put everything together. Using \eqref{eq:mgf-ssc-step4} and \eqref{eq:mgf-ssc-step5} in \eqref{eq:mgf-ssc-step3}, we obtain
\begin{align} \label{eq:mgf-ssc-step3-ready}
    \left|\E{\sum_{j=1}^n (e^{s\epsilon n \tq_{\perp j}}-1)^2} \right| \leq (s\epsilon)^2 n^\frac{5}{2} \sqrt{C_4 C_{\exp}}.
\end{align}
Now using \eqref{eq:mgf-ssc-step3-ready} in \eqref{eq:mgf-ssc-step2}, we obtain
\begin{align}
    \E{\sum_{j=1}^n \mu_{ij} \ind{\tq_j=0} (e^{s\epsilon n \tq_{\perp j}}-1)} &\leq \sqrt{\mu_{\max}\mu_\Sigma \epsilon} \sqrt{(s\epsilon)^2 n^\frac{5}{2} \sqrt{C_4 C_{\exp}}} \nonumber \\
    &= \epsilon^{\frac{3}{2}} s n^{\frac{5}{4}} (\mu_{\max}\mu_{\Sigma})^{\frac{1}{2}}  (C_4C_{\exp})^{\frac{1}{4}}.
    \label{eq:mgf-ssc-step2-ready}
\end{align}
Then, plugging \eqref{eq:mgf-ssc-step2-ready} in \eqref{eq:mgf-ssc-step1} we obtain
\begin{align*}
    \E{e^{-s\epsilon \tq_\Sigma} \sum_{j=1}^n \mu_{ij}\ind{\tq_j=0} } \leq \mu_\Sigma \epsilon + \epsilon^{\frac{3}{2}} s n^{\frac{5}{4}} (\mu_{\max}\mu_{\Sigma})^{\frac{1}{2}}  (C_4C_{\exp})^{\frac{1}{4}} = \mu_\Sigma\epsilon + O(\epsilon^{\frac{3}{2}}).
\end{align*}

\end{APPENDICES}

% Acknowledgments here
%\ACKNOWLEDGMENT{}

% References here (outcomment the appropriate case)

% CASE 1: BiBTeX used to constantly update the references
%   (while the paper is being written).

\newpage 

\bibliographystyle{informs2014} % outcomment this and next line in
\bibliography{references} % if more than one, comma separated

@book{Asm-2003-Applied-Prob-book,
  title={Applied probability and queues},
  author={Asmussen, S{\o}ren},
  year={2003},
  publisher={Springer}
}

@article{Bac-Mak-1991-M-G-1-martingale,
  title={Martingale relations for the {M/GI/1} queue with Markov modulated Poisson input},
  author={Baccelli, Fran{\c{c}}ois and Makowski, Armand M},
  journal={Stochastic processes and their applications},
  volume={38},
  number={1},
  pages={99--133},
  year={1991},
  publisher={Elsevier}
}

@article{Ban-Muk-2019-JSQ-HW,
author = {Sayan Banerjee and Debankur Mukherjee},
title = {{Join-the-shortest queue diffusion limit in Halfin–Whitt regime: Tail asymptotics and scaling of extrema}},
volume = {29},
journal = {The Annals of Applied Probability},
number = {2},
publisher = {Institute of Mathematical Statistics},
pages = {1262 -- 1309},
year = {2019},
doi = {10.1214/18-AAP1436},
URL = {https://doi.org/10.1214/18-AAP1436}
}

@article{Ban-Muk-2020-JSQ-HW-sensitivity,
 ISSN = {10505164, 21688737},
 _URL = {https://www.jstor.org/stable/26907045},
 author = {Sayan Banerjee and Debankur Mukherjee},
 journal = {The Annals of Applied Probability},
 number = {1},
 pages = {pp. 80--144},
 publisher = {Institute of Mathematical Statistics},
 title = {JOIN-THE-SHORTEST QUEUE DIFFUSION LIMIT IN HALFIN–WHITT REGIME: SENSITIVITY ON THE HEAVY-TRAFFIC PARAMETER},
 urldate = {2025-10-23},
 volume = {30},
 year = {2020}
}

@article{Bell-Will-01-Nsystem,
  title={Dynamic scheduling of a system with two parallel servers in heavy traffic with resource pooling: Asymptotic optimality of a threshold policy},
  author={Bell, Steven L and Williams, Ruth J},
  journal={Annals of Applied Probability},
  volume={11},
  number={3},
  pages={608--649},
  year={2001},
  publisher={Institute of Mathematical Statistics}
}

@article{Blo-et-al-2014-Markov-G-infty,
  title={Markov-modulated infinite-server queues with general service times},
  author={Blom, J and Kella, Offer and Mandjes, Michel and Thorsdottir, Halldora},
  journal={Queueing Systems},
  volume={76},
  number={4},
  pages={403--424},
  year={2014},
  publisher={Springer}
}

@article{Bra-2020-JSQ,
  title={Steady-state analysis of the join-the-shortest-queue model in the {Halfin--Whitt} regime},
  author={Braverman, Anton},
  journal={Mathematics of Operations Research},
  volume={45},
  number={3},
  pages={1069--1103},
  year={2020},
  publisher={INFORMS}
}

@article{Bra-Dai-2017-Stein,
  title={Stein's method for steady-state diffusion approximations: {A}n introduction through the {E}rlang-{A} and {E}rlang-{C} models},
  author={Braverman, Anton and Dai, JG and Feng, Jiekun},
  journal={Stochastic Systems},
  volume={6},
  number={2},
  pages={301--366},
  year={2017},
  publisher={INFORMS}
}

@article{Bra-Dai-Miy_2017_BAR,
  title={Heavy traffic approximation for the stationary distribution of a {G}eneralized {J}ackson {N}etwork: The {BAR} approach},
  author={Braverman, Anton and Dai, JG and Miyazawa, Masakiyo},
  journal={Stochastic Systems},
  volume={7},
  number={1},
  pages={143--196},
  year={2017},
  publisher={INFORMS}
}

@article{Bra-Dai-2017-Stein2,
	title={Stein's Method for Steady-State Diffusion Approximations of {M/Ph/n+ M} Systems},
	author={Braverman, A. and Dai, J.G.},
	journal={The Annals of Applied Probability},
	volume={27},
	number={1},
	pages={550--581},
	year={2017},
	publisher={Institute of Mathematical Statistics}
}

@article{Bra-Dai-Miy-2025-BAR,
  title={The {BAR} approach for multiclass queueing networks with {SBP} service policies},
  author={Braverman, Anton and Dai, JG and Miyazawa, Masakiyo},
  journal={Stochastic systems},
  volume={15},
  number={1},
  pages={1--49},
  year={2025},
  publisher={INFORMS}
}

@article{Bur-Smi_1986_Markov-G-1-HT,
  title={An asymptotic analysis of a queueing system with {Markov}-modulated arrivals},
  author={Burman, David Y and Smith, Donald R},
  journal={Operations Research},
  volume={34},
  number={1},
  pages={105--119},
  year={1986},
  publisher={INFORMS}
}

@article{Che-Mou-Mag-2022-SGD-Transform,
  title={Stationary behavior of constant stepsize {SGD} type algorithms: An asymptotic characterization},
  author={Chen, Zaiwei and Mou, Shancong and Maguluri, Siva Theja},
  journal={Proceedings of the ACM on Measurement and Analysis of Computing Systems},
  volume={6},
  number={1},
  pages={1--24},
  year={2022},
  publisher={ACM New York, NY, USA}
}

@article{Dai-Huo-2024-Multi-Scale-HT,
  title={Asymptotic Product-form Steady-state for Multiclass Queueing Networks: A Reentrant Line Case Study},
  author={Dai, Jim and Huo, Dongyan},
  journal={arXiv preprint arXiv:2411.00930},
  year={2024}
}

@article{Dim-2011-Falin-Generalization,
  title={Single-server queueing system with {Markov}-modulated arrivals and service times},
  author={Dimitrov, Mitko},
  journal={Pliska Stud. Math. Bulg},
  volume={20},
  pages={53--62},
  year={2011}
}

@article{Dai-Gly-Yao-2023-Multi-Scale-HT,
  title={Asymptotic product-form steady-state for generalized {Jackson} networks in multi-scale heavy traffic},
  author={Dai, JG and Glynn, Peter and Xu, Yaosheng},
  journal={arXiv preprint arXiv:2304.01499},
  year={2023}
}

@article{Eph-1980-JSQ,
  title={A simple dynamic routing problem},
  author={Ephremides, Anthony and Varaiya, Pravin and Walrand, Jean},
  journal={IEEE transactions on Automatic Control},
  volume={25},
  number={4},
  pages={690--693},
  year={1980},
  publisher={IEEE}
}

@article{Ery-Sri-2012-drift,
  title={Asymptotically tight steady-state queue length bounds implied by drift conditions},
  author={Eryilmaz, Atilla and Srikant, R},
  journal={Queueing Systems},
  volume={72},
  number={3},
  pages={311--359},
  year={2012},
  publisher={Springer}
}

@article{Esc-Gam-2018-JSQ,
  title={Join the shortest queue with many servers. {The} heavy-traffic asymptotics},
  author={Eschenfeldt, Patrick and Gamarnik, David},
  journal={Mathematics of Operations Research},
  volume={43},
  number={3},
  pages={867--886},
  year={2018},
  publisher={INFORMS}
}

@article{Fal-Fal-1999-Markov-G-1-HT,
  title={Heavy traffic analysis of {M/G/1} type queueing systems with {Markov}-modulated arrivals},
  author={Falin, Gennadi and Falin, Anatoli},
  journal={Top},
  volume={7},
  number={2},
  pages={279--291},
  year={1999},
  publisher={Springer}
}

@article{Fie-2025-working-vacation-survey,
    AUTHOR = {Fiems, Dieter},
    TITLE = {Queues with Working Vacations: A Survey},
    JOURNAL = {Mathematics},
    VOLUME = {13},
    YEAR = {2025},
    NUMBER = {11},
    ARTICLE-NUMBER = {1894},
    URL = {https://www.mdpi.com/2227-7390/13/11/1894},
    ISSN = {2227-7390},
    DOI = {10.3390/math13111894}
}

@article{Fos-Sal-1978-jsq,
  title={A basic dynamic routing problem and diffusion},
  author={Foschini, G and Salz, JACK},
  journal={IEEE Transactions on Communications},
  volume={26},
  number={3},
  pages={320--327},
  year={1978},
  publisher={IEEE}
}

@article{Gro-Scu-Har-2019-guardrails,
  title={Load balancing guardrails: Keeping your heavy traffic on the road to low response times},
  author={Grosof, Isaac and Scully, Ziv and Harchol-Balter, Mor},
  journal={Proceedings of the ACM on Measurement and Analysis of Computing Systems},
  volume={3},
  number={2},
  pages={1--31},
  year={2019},
  publisher={ACM New York, NY, USA}
}

@incollection{Har-1988-Brownian,
  title={Brownian Models of Queueing Networks with Heterogeneous Customer Populations},
  author={Harrison, J},
  booktitle={Stochastic Differential Systems, Stochastic Control Theory and Applications},
  pages={147--186},
  year={1988},
  publisher={Springer}
}

@book{Har_2013_book,
    place={Cambridge},
    title={Brownian Models of Performance and Control},
    DOI={10.1017/CBO9781139087698},
    publisher={Cambridge University Press},
    author={Harrison, J.}, year={2013}
}

@article{HarLop-1999-CRP,
title = {Heavy traffic resource pooling in parallel-server systems},
author = {Harrison, J and L{\'o}pez, M},
journal = {Queueing Systems},
pages = {339-368},
year = {1999},
}

@article{HL-Mag-2020-MGF,
  title={Transform methods for heavy-traffic analysis},
  author={Hurtado-Lange, Daniela and Maguluri, Siva Theja},
  journal={Stochastic Systems},
  volume={10},
  number={4},
  pages={275--309},
  year={2020},
  publisher={INFORMS}
}

@article{HL-Mag-2022-JSQ-Many-Server,
  title={A load balancing system in the many-server heavy-traffic asymptotics},
  author={Hurtado-Lange, Daniela and Maguluri, Siva Theja},
  journal={Queueing Systems},
  volume={101},
  number={3},
  pages={353--391},
  year={2022},
  publisher={Springer}
}

@article{Jhu-HL-Mag-2023-JSQ-ROC,
  title={Exponential Tail Bounds on Queues},
  author={Jhunjhunwala, Prakirt and Hurtado-Lange, Daniela and Theja Maguluri, Siva},
  journal={ACM SIGMETRICS Performance Evaluation Review},
  volume={51},
  number={2},
  pages={24--26},
  year={2023},
  publisher={ACM New York, NY, USA}
}

@article{Jhu-Zub-Mag_2025_JSQ-abandonments,
  title={Join-the-shortest queue with abandonment: Critically loaded and heavily overloaded regimes},
  author={Jhunjhunwala, Prakirt R and Zubeldia, Martin and Maguluri, Siva Theja},
  journal={Mathematics of Operations Research},
  year={2025},
  publisher={INFORMS}
}

@article{Kin_1962_RBM,
  title={On queues in heavy traffic},
  author={Kingman, J},
  journal={Journal of the Royal Statistical Society. Series B (Methodological)},
  pages={383--392},
  year={1962},
  publisher={JSTOR}
}

@article{Liu-Yin-2020-Stein-SubHW,
  title={Steady-state analysis of load-balancing algorithms in the sub-{Halfin--Whitt} regime},
  author={Liu, Xin and Ying, Lei},
  journal={Journal of Applied Probability},
  volume={57},
  number={2},
  pages={578--596},
  year={2020},
  publisher={Cambridge University Press}
}

@article{Luo-Zub-2025-Load-Balancing-Stability,
  title={Stability and Heavy-traffic Delay Optimality of General Load Balancing Policies in Heterogeneous Service Systems},
  author={Luo, Yishun and Zubeldia, Martin},
  journal={arXiv preprint arXiv:2510.14284},
  year={2025}
}

@article{Mag-Sri-2016-switch,
  title={Heavy traffic queue length behavior in a switch under the {MaxWeight} algorithm},
  author={Maguluri, Siva Theja and Srikant, R},
  journal={Stochastic Systems},
  volume={6},
  number={1},
  pages={211--250},
  year={2016},
  publisher={INFORMS}
}

@inproceedings{Mit-1996-JSQ-d,
  title={Load balancing and density dependent jump {Markov} processes},
  author={Mitzenmacher, Michael},
  booktitle={Proceedings of 37th Conference on Foundations of Computer Science},
  pages={213--222},
  year={1996},
  organization={IEEE}
}

@article{Mou-Mag-2024-Switch-Markov,
  title={Heavy-traffic queue length behavior in a switch under {Markovian} arrivals},
  author={Mou, Shancong and Maguluri, Siva Theja},
  journal={Advances in Applied Probability},
  volume={56},
  number={3},
  pages={1106--1152},
  year={2024},
  publisher={Cambridge University Press}
}

@article{Muk-2022-ROC-JSQ,
  title={Rates of convergence of the join the shortest queue policy for large-system heavy traffic},
  author={Mukherjee, Debankur},
  journal={Queueing Systems},
  volume={100},
  number={3},
  pages={317--319},
  year={2022},
  publisher={Springer}
}

@article{Pra-Zhu-1989-Markov-q,
  title={Markov-modulated queueing systems},
  author={Prabhu, NU and Zhu, Yixin},
  journal={Queueing systems},
  volume={5},
  number={1},
  pages={215--245},
  year={1989},
  publisher={Springer}
}

@article{Var-Mag-2022-HT-two-sided-q,
  title={A heavy traffic theory of two-sided queues},
  author={Mahavir Varma, Sushil and Theja Maguluri, Siva},
  journal={ACM SIGMETRICS Performance Evaluation Review},
  volume={49},
  number={3},
  pages={43--44},
  year={2022},
  publisher={ACM New York, NY, USA}
}

@article{Vve-Dob-Kar-1996-JSQ2,
  title={Queueing system with selection of the shortest of two queues: An asymptotic approach},
  author={Vvedenskaya, Nikita Dmitrievna and Dobrushin, Roland L'vovich and Karpelevich, Fridrikh Izrailevich},
  journal={Problemy Peredachi Informatsii},
  volume={32},
  number={1},
  pages={20--34},
  year={1996},
  publisher={Russian Academy of Sciences, Branch of Informatics, Computer Equipment and~…}
}

@article{Wal_2020_SteinHT,
  title={Stein's Method for the Single Server Queue in Heavy Traffic},
  author={Gaunt, Robert and Walton, Neil},
  journal={Statistics \& Probability Letters},
  volume={156},
  pages={108566},
  year={2020},
  publisher={Elsevier}
}

@article{Wang-Mag-Sri-2022-Bandwidth-Sharing,
  title={Heavy-traffic insensitive bounds for weighted proportionally fair bandwidth sharing policies},
  author={Wang, Weina and Maguluri, Siva Theja and Srikant, R and Ying, Lei},
  journal={Mathematics of Operations Research},
  volume={47},
  number={4},
  pages={2691--2720},
  year={2022},
  publisher={INFORMS}
}

@article{Web-1978-JSQ-optimal,
  title={On the optimal assignment of customers to parallel servers},
  author={Weber, Richard R},
  journal={Journal of Applied Probability},
  volume={15},
  number={2},
  pages={406--413},
  year={1978},
  publisher={Cambridge University Press}
}

@article{Williams-2000-CRP,
  title={On dynamic scheduling of a parallel server system with complete resource pooling},
  author={Williams, R},
  journal={Fields Institute Communications},
  volume={28},
  number={49-71},
  pages={5--1},
  year={2000},
  publisher={Citeseer}
}

@article{Win-1977-JSQ-optimality,
  title={Optimality of the shortest line discipline},
  author={Winston, Wayne},
  journal={Journal of applied probability},
  volume={14},
  number={1},
  pages={181--189},
  year={1977},
  publisher={Cambridge University Press}
}

@article{Ying-2017-Stein,
  title={Stein's method for mean field approximations in light and heavy traffic regimes},
  author={Ying, Lei},
  journal={Proceedings of the ACM on Measurement and Analysis of Computing Systems},
  volume={1},
  number={1},
  pages={1--27},
  year={2017},
  publisher={ACM New York, NY, USA}
}

@article{Gup-2007-Analysis,
title = {Analysis of join-the-shortest-queue routing for web server farms},
journal = {Performance Evaluation},
volume = {64},
number = {9},
pages = {1062-1081},
year = {2007},
note = {Performance 2007},
issn = {0166-5316},
doi = {https://doi.org/10.1016/j.peva.2007.06.012},
url = {https://www.sciencedirect.com/science/article/pii/S0166531607000624},
author = {Varun Gupta and Mor {Harchol Balter} and Karl Sigman and Ward Whitt},
}

@article{Bambos-2004-Queueing, title={Queueing and scheduling in random environments}, volume={36}, DOI={10.1239/aap/1077134474}, number={1}, journal={Advances in Applied Probability}, author={Bambos, Nicholas and Michailidis, George}, year={2004}, pages={293–317}}

@article{Whitt-2018-Varying,
  title={Time-Varying Queues},
  author={Whitt, Ward},
  journal={Queueing Models and Service Management},
  volume={1},
  number={2},
  pages={79--164},
  year={2018}
}

%%%%%%%%%%%%%%%%%
\end{document}